\numberwithin{equation}{section}
\numberwithin{equation}{section}
\newtheorem{theo}{Theorem}[section]
\newtheorem{defi}[theo]{Definition}
\newtheorem{coro}[theo]{Corollary}
\newtheorem{lemm}[theo]{Lemma}
\newtheorem{rem}[theo]{Remark}
\newtheorem{prop}[theo]{Proposition}
\newtheorem{rema}[theo]{Remark}
\newcommand{\End}{{\mathrm{End}}}
\newcommand{\cc}{\mathbb{C}}
\newcommand{\z}{\mathbb{Z}}
\newcommand{\mi}{\mathbf{i}}
\newcommand{\mk}{\mathbf{k}}
\newcommand{\mj}{\mathbf{j}}
\newcommand{\bmi}{\underline{\mathbf{i}}}
\newcommand{\bmj}{\underline{\mathbf{j}}}
\newcommand{\bmk}{\underline{\mathbf{k}}}
\newcommand{\Ind}{\mbox{Ind}}
\def\NO{\mbox{\,$\circ\atop\circ$}\,}
\begin{document}
	
	\title{Smooth modules  over the N=1 Bondi-Metzner-Sachs  superalgebra}

	\author{Dong Liu}
	\address{Department of Mathematics, Huzhou University, Zhejiang Huzhou, 313000, China}
	\email{liudong@zjhu.edu.cn}
	
	\author{Yufeng Pei}
	\address{Department of Mathematics, Huzhou University, Zhejiang Huzhou, 313000, China} \email{pei@shnu.edu.cn}
	
	\author{Limeng Xia}
	
	\address{Institute of Applied System Analysis, Jiangsu University, Jiangsu Zhenjiang, 212013,
		China}\email{xialimeng@ujs.edu.cn}

\author[Zhao]{Kaiming Zhao}
\address{Department of Mathematics, Wilfrid Laurier University, Waterloo, ON, Canada N2L3C5}\email{kzhao@wlu.ca}

 \date{\today}

\keywords{N=1 BMS superalgebra, Verma module, Whittaker module, smooth module, free field realization}

	\thanks{Mathematics Subject Classification: 17B65, 17B68, 17B69,17B70, 81R10.}

	\maketitle

	\begin{abstract}
In this paper, we present a determinant formula for the contravariant form on Verma modules over the N=1 Bondi-Metzner-Sachs (BMS)  superalgebra. This formula establishes  a necessary and sufficient condition for the irreducibility of the Verma modules.
We then introduce and characterize a class of simple smooth modules that generalize both Verma and Whittaker modules over the N=1 BMS superalgebra.
We also utilize the Heisenberg-Clifford vertex superalgebra to construct a free field realization for the N=1 BMS superalgebra. This free field realization allows us to obtain a family of natural smooth modules over the N=1 BMS superalgebra, which includes Fock modules and certain Whittaker modules.

 \end{abstract}	
	
	\tableofcontents
	
	\setcounter{section}{0}
	
	\section{Introduction}
	
	\allowdisplaybreaks
	
The study of conformal field theory relies heavily on the Virasoro algebra, which is the central extension of the Witt algebra. The representation theory of this algebra is an active research area in mathematical  and theoretical physics. In particular, simple Virasoro modules have been extensively explored in both mathematics and physics literature (see \cite{IK3}, \cite{MZ1}, \cite{MZ2}, \cite{MZ3} and references therein). Additionally, superconformal extensions of the Virasoro algebra known as super Virasoro algebras play an important role in superstring theory and superconformal field theory. Significant research has also been conducted on simple super Virasoro modules \cite{IK1}, \cite{IK2}, \cite{LPX1}, \cite{LPX2}, \cite{MR}.

The Bondi-Metzner-Sachs (BMS) algebra was proposed to characterize the infinite-dimensional symmetries of asymptotically flat spacetimes at null infinity and was initially discovered in \cite{BBM}, \cite{Sa1}, \cite{Sa2}. The BMS algebra can be also obtained by taking a particular contraction of two copies of the Virasoro algebra.  Notably, the BMS algebra appeared  in the Galilean conformal field \cite{BGMM}, the classification of simple vertex operator algebras of moonshine type \cite{ZD} and  two-dimensional statistical systems \cite{HSSU}. Representations of the BMS algebra have been extensively discussed in references such as \cite{AR}, \cite{BSZ}, \cite{BO1}, \cite{BO2}, \cite{JP}, \cite{JPZ}, \cite{JZ}, \cite{Ob}, \cite{Ra}, \cite{ZD}.

The {\rm N=1} BMS superalgebra $\mathfrak{g}$, a minimal supersymmetric extension of the BMS algebra, plays a key role in describing asymptotic supergravity in three-dimensional flat spacetime \cite{BDMT}. Simple $\mathfrak{g}$-modules with finite-dimensional weight spaces were classified in \cite{DCL}, and Whittaker $\mathfrak{g}$-modules were introduced and studied in \cite{DGL1}. A class of simple non-weight $\mathfrak{g}$-modules was constructed and classified in \cite{CDYS}. Recent progress includes a free field realization of $\mathfrak{g}$ using the $\beta$-$\gamma$ and $b$-$c$ ghost systems \cite{BJMN}.

The study of smooth modules is central to representation theory for $\mathbb{Z}$ or $\frac{1}{2}\mathbb{Z}$-graded Lie superalgebras, due to connections with vertex operator superalgebras \cite{Li}, \cite{Li1}. However, completely classifying smooth modules remains an open challenge. Simple smooth modules have been classified for the Virasoro algebra and the N=1 Neveu-Schwarz algebra \cite{MZ2}, \cite{LPX1}. Partial classification results also exist for other Lie (super)algebras, including the N=1 Ramond algebra \cite{C}, the twisted Heisenberg-Virasoro algebra \cite{CG}, \cite{Gao}, \cite{TYZ}, the mirror Heisenberg-Virasoro algebra \cite{LPXZ0}, \cite{TYZ}, the Ovsienko-Roger algebra \cite{LPX3}, the planar Galilean conformal algebra \cite{GG}, \cite{CY}, the Fermion-Virasoro algebra \cite{XZ}, and the superconformal current algebra \cite{LPXZ}.

Motivated by these advances, we systematically study smooth modules over the {\rm N=1} BMS superalgebra, particularly  including Verma, Whittaker, and Fock  modules. 

First, using an anti-involution of $\mathfrak{g}$, we define a contravariant symmetric bilinear form on Verma modules over $\mathfrak{g}$. We compute the Gram matrix to obtain a determinant formula and determine necessary and sufficient conditions for simplicity of Verma modules (Theorem \ref{main1}). Surprisingly, these computations are much simpler than the Kac determinant formula for the Virasoro algebra \cite{KRR}, despite $\mathfrak{g}$ being more complicated.

Clearly, the Verma modules and the Whittaker modules~\cite{DGL1} are very special cases of smooth modules. It is natural to consider studying general smooth modules over the $N=1$ BMS superalgebra $\mathfrak{g}$. In contrast to Verma and Whittaker modules, the structure of general smooth $\mathfrak{g}$-modules is more complex.
In this paper, we construct (Theorem~\ref{main3}) and classify simple smooth $\mathfrak{g}$-modules under certain conditions (Theorem~\ref{main4}). We establish a one-to-one correspondence between simple smooth $\mathfrak{g}$-modules and simple modules of a family of finite-dimensional solvable Lie superalgebras associated with $\mathfrak{g}$. Unlike the smooth module classification for the Virasoro algebra~\cite{MZ2} and the $N=1$ Neveu-Schwarz algebra~\cite{LPX2}, our classification result requires additional injectivity conditions. Removing these conditions to obtain a complete smooth $\mathfrak{g}$-module classification remains an open problem.

Finally, we show that there is an isomorphism between the categories of smooth $\mathfrak{g}$-modules and {\rm N=1} BMS vertex superalgebra modules. This allows employing powerful vertex algebra tools to study smooth $\mathfrak{g}$-modules. We demonstrate a nontrivial homomorphism from the {\rm N=1} BMS vertex algebra to the Heisenberg-Clifford vertex algebra, providing a free field realization of $\mathfrak{g}$ (Theorem \ref{main2}). Using this, we construct natural smooth $\mathfrak{g}$-modules including Fock modules and certain Whittaker modules. Our results provide new perspectives on representation theory of the {\rm N=1} BMS superalgebra  with potential applications in asymptotic gravity symmetries.

The structure of this paper is as follows: Section 2 reviews some notations and preliminaries.
Section 3 presents a formula for the determinant of the contravariant form on Verma modules (Lemma \ref{l3.1}), which can be used to determine the necessary and sufficient conditions for a Verma module to be simple (Theorem \ref{main1}).
Section 4 constructs simple smooth modules for $\frak g$  (Theorem \ref{main3})and classifies simple smooth modules under certain conditions  (Theorem \ref{main4}).
Section 5  provides a free field realization of $\frak g$  (Theorem \ref{main2}). Using this free field realization, some smooth modules over $\frak  g$ are obtained from smooth modules over the Heisenberg-Clifford superalgebra.

Throughout this  paper, we will use the following notations: $\cc$, ${\mathbb N}$, $\z_+$, and $\z$ refer to the sets of complex numbers, non-negative integers, positive integers, and integers, respectively. We consider a $\z_2$-graded vector space $V = V_{\bar 0} \oplus V_{\bar 1}$, where an element $u\in V_{\bar 0}$ (respectively, $u\in V_{\bar 1}$) is called even (respectively, odd). We define $|u|=0$ if $u$ is even and $|u|=1$ if $u$ is odd. The elements in $V_{\bar 0}$ or $V_{\bar 1}$ are referred to as homogeneous, and whenever $|u|$ is used, it means that $u$ is homogeneous.


\section{Notations and preliminaries}
In this section, we will review the  notations and results associated with  the  N=1 BMS superalgebra.


	\begin{defi}[\cite{BDMT}]
		The   {\bf N=1 BMS superalgebra} 
		$$\frak g=\bigoplus_{n\in\z}\cc L_n\oplus\bigoplus_{n\in\z}\cc M_n\oplus\bigoplus_{n\in\z+\frac{1}{2}}\cc Q_n\oplus\cc {\bf c}_1\oplus\cc {\bf c}_2$$
		is a Lie superalgebra, where 
		$$
		\frak g_{\bar0}=\bigoplus_{n\in\z}\cc L_n\oplus\bigoplus_{n\in\z}\cc M_n\oplus\cc  {\bf c}_1\oplus\cc  {\bf c}_2,\quad \frak g_{\bar1}=\bigoplus_{r\in\z+\frac12} \cc  Q_r,
		$$
		with the following commutation relations:
		\begin{align*}\label{SBMS}
			&{[L_m, L_n]}=(m-n)L_{m+n}+{1\over12}\delta_{m+n, 0}(m^3-m){\bf c}_1,\nonumber\\
			&{[L_m, M_n]}=(m-n)M_{m+n}+{1\over12}\delta_{m+n, 0}(m^3-m){\bf c}_2,\nonumber\\
			&{[Q_r, Q_s]}=2M_{r+s}+{1\over3}\delta_{r+s, 0}\left(r^2-\frac14\right){\bf c}_2,\\
			&{[L_m, Q_r]}=\left(\frac{m}{2}-r\right)Q_{m+r},\nonumber\\
			&{[M_m,M_n]}=[M_n,Q_r]=0,\\
			&[{\bf c}_1,\frak g]=[{\bf c}_2, \frak g]=0, \nonumber
		\end{align*} for any $m, n\in\z, r, s\in\z+\frac12$.
	\end{defi}

Note that the even part $\mathfrak{g}_{\bar 0}$ corresponds to the  BMS algebra, which is also called the Lie algebra $W(2,2)$ (\cite{ZD}). Additionally, the subalgebra ${\rm Vir} = \bigoplus_{n \in \mathbb{Z}} \mathbb{C} L_n \oplus \mathbb{C} \mathbf{c_1}$ represents the Virasoro algebra.

The  N=1 BMS superalgebra $\mathfrak{g}$ has a close relation with the N=1 Neveu-Schwarz algebra
$$
\mathfrak{ns}=\bigoplus_{n \in \mathbb{Z}} \mathbb{C} L_n \oplus \bigoplus_{r \in \mathbb{Z}+\frac{1}{2}} \mathbb{C} G_r \oplus \mathbb{C} \mathbf{c}
$$
which satisfies the following commutation relations:
$$
\begin{aligned}
{\left[L_m, L_n\right] } & =(m-n) L_{m+n}+\delta_{m,-n} \frac{m^3-m}{12} \mathbf{c}, \\
{\left[L_m, G_r\right] } & =\left(\frac{m}{2}-r\right) G_{m+r}, \\
{\left[G_r, G_s\right] } & =2 L_{r+s}+\frac{1}{3} \delta_{r+s, 0}\left(r^2-\frac{1}{4}\right) \mathbf{c}, \\
{[\mathfrak{ns}, \mathbf{c}] } & =0,
\end{aligned}
$$
for all $m, n \in \mathbb{Z}, r, s \in \mathbb{Z}+\frac{1}{2}$, where
$$
\left|L_n\right|=\overline{0}, \quad\left|G_r\right|=\overline{1}, \quad|\mathbf{c}|=\overline{0} .
$$

Let $A$ denote the truncated polynomial algebra $\mathbb{C}[t]/(t^3)$. Let $\mathfrak{ns}\otimes A$ denote the map Lie superalgebra with
$$
[x\otimes a, y\otimes b]=[x,y]\otimes ab,\quad x,y\in \mathfrak{ns},a,b\in A. 
$$
Then there is a natural embedding $\iota: \mathfrak{g}\to \mathfrak{ns}\otimes A$:

\begin{align*}
L_m &\mapsto L_m\otimes 1,\\
M_m &\mapsto L_m\otimes t^2,\\
Q_r &\mapsto G_r\otimes t,\\
\mathbf{c}_1&\mapsto \mathbf{c}\otimes 1,\\
\mathbf{c}_2&\mapsto \mathbf{c}\otimes t.
\end{align*}

		The  N=1 BMS superalgebra	$\frak g$ has an anti-involution\; $\bar{ }:\mathfrak{g}\to \mathfrak{g}$  given by
		\begin{eqnarray*}
			\overline{L_n} =L_{-n},\quad    \overline{M_n}=M_{-n},\quad    \overline{Q_r}=Q_{-r},\quad   \overline{{\bf c}_{1}}={\bf c}_{1},\quad  \overline{{\bf c}_{2}}={\bf c}_{2}
		\end{eqnarray*}
		for $n \in\z,r\in \z+\frac{1}{2}$. This anti-involution induces an anti-involution on the enveloping algebra $U(\frak g)$ naturally.

	Moreover,  $\frak g$  is equipped with a $\frac12\z$-grading by the eigenvalues of the adjoint action of $L_0$:
	$$
	\frak g=\bigoplus_{m\in\frac{1}{2}\z}\frak g_{m},
	$$ 
	where
	$$
	\frak g_m=\begin{cases}
		 \cc L_m\oplus \cc M_m \oplus\cc \delta_{m,0} {\bf c}_1\oplus\cc  \delta_{m,0}{\bf c}_2,\quad  &\text{if}\quad m\in\z;\\
		 \cc Q_m,\quad &\text{if}\quad m\in\z+\frac{1}{2}.
	\end{cases}
	$$
	It follows that $\frak g$  possesses a triangular decomposition:
	$
	\frak g=\frak g_{+}\oplus \frak g_{0}\oplus \frak g_{-},
	$
	where
	\begin{align*}
		\frak g_\pm&=\bigoplus_{n\in\z_+}\cc L_{\pm n}\oplus\bigoplus_{n\in\z_+}\cc M_{\pm n}\oplus  \bigoplus_{r\in\mathbb{N}+\frac{1}2}\cc Q_{\pm r},\\
		\frak g_{0}&=\cc L_0\oplus\cc M_0\oplus\cc {{\bf c}_1}\oplus\cc {{\bf c}_2}.
	\end{align*}

Let $\varphi: \mathfrak{g}_0\to \cc$ be a linear function.	
Recall that a highest weight vector  in a $U(\frak g)$-module $M$ is a vector $v\in M$ such that 
$$
xv=\varphi(x)v,\quad \mathfrak{g}_+v=0
$$ 
for all $x\in U(\frak g)$. A {\bf highest weight module}  is a $U(\frak g)$-module which is cyclically generated by a highest weight vector.

For any $k\in \mathbb{Z}_+$, let
$$
\frak g^{(k)}=\bigoplus_{i\ge k}\frak g_i\oplus \cc {\bf c}_1\oplus \cc {\bf c}_2.
$$ 
Then $\frak g^{(k)}$ is a subalgebra of $\frak g$. Let $\phi_k:\mathfrak{g}^{(k)}\to\mathbb C$ be  a nontrivial Lie superalgebra homomorphism. It follows that
$$\phi_k(L_i) = \phi_k(M_i)=0, \quad\forall i \ge 2k+1,\quad\text{and}\quad \phi_k(Q_{j+\frac12})=0,\quad \forall j\ge k.$$ 
A $\mathfrak{g}$-module $W$ is called a {\bf Whittaker module} if $W$ is generated by a vector $w\in W$, where $\mathbb Cw$ is a one-dimensional $\frak g^{(k)}$-module induced by $\phi_k$, i.e.
$xw=\phi_k(x)w$ for any $x\in \frak g^{(k)}$. In this case, $w$ is called a Whittaker vector. 

	A $\mathfrak{g}$-module $W$ is called  {\bf  smooth} in the sense that for every $w\in W$,
	$$L_iw=M_iw=Q_{i-\frac{1}{2}}w= 0$$
	for $i$ sufficiently large.
 
 It is clear that both highest weight modules  and Whittaker modules over $\mathfrak{g}$ are smooth.

If $W$ is a $\mathfrak{g}$-module on which  ${\bf c}_1, {\bf c}_2$ act as complex
	scalars $c_1, c_2$, respectively,   we say that $W$ is of {\bf  central charge } $(c_1, c_2)$. 
 For $c_1,c_2\in \mathbb{C}$, let $\mathcal{R}_{\mathfrak g}(c_1,c_2)$ denote the category  whose objects are  smooth $\mathfrak{g}$-modules of central charge $(c_1, c_2)$.


\begin{defi}\label{LF}
Let $L$ be a  Lie superalgebra, $V$ an $L$-module and $x\in L$.
\begin{itemize}
\item[\rm (1)] If for any $v\in V$ there exists $n\in{\mathbb Z}_+$ such that $x^nv=0$,  then the action of  $x$  on $V$ is said to be  {\bf locally nilpotent}.
\item[\rm (2)] If for any $v\in V$, $\mathrm{dim}\,(\sum_{n\in\mathbb N}\mathbb C x^nv)<+\infty$, then   the action of $x$ on $V$ is  said to be {\bf locally finite}.
\item[\rm (3)] The action of $L$   on $V$  is said to be locally nilpotent if for any $v\in V$ there exists an $n\in{\mathbb Z}_+$ (depending on $v$) such that $x_1x_2\cdots x_nv=0$ for any $x_1,x_2,\cdots, x_n\in L$.
\item[\rm (4)] The action of $L$  on $V$  is said to be locally finite if for any $v\in V$, {\rm dim}\,$U(L)v<\infty$.
\end{itemize}
\end{defi}

	Denote by  $\mathbb{M}$ the set of all infinite vectors of the form $\mi:=(\ldots, i_2, i_1)$ with entries in $\mathbb N$,
	satisfying the condition that the number of nonzero entries is finite, and $\mathbb{M}_1:=\{\mathbf{i}\in \mathbb{M}\mid i_k=0, 1, \ \forall k\in\z_+\}$.
	For $\mi\in\mathbb M$, denote by ${\rm supp}(\mi):=\{i_s\mid i_s\ne 0, s\in \mathbb Z_+\}$.

	Let $\mathbf{0}$ denote the element $(\ldots, 0, 0)\in\mathbb{M}$ and
	for
	$i\in\z_+$ let $\epsilon_i$ denote the element
	$$(\ldots,0,1,0,\ldots,0)\in\mathbb{M},$$
	where $1$ is
	in the $i$-th  position from right.  For ${\bf 0}\neq \mi\in\mathbb{M}$, let $p$ be the smallest integer such that $i_p \neq0$ and define  $\mi^\prime=\mi-\epsilon_p$.
	let $q$ be the largest integer such that $i_q \neq0$ and define  $\mi^{\prime\prime}=\mi-\epsilon_q$.

	For $\mi, \mj\in\mathbb{M}, \mk\in\mathbb{M}_1$, we denote
	$$\ell(\mi)=\sum i_k,$$
	$$Q^{\mk}M^{\mj}L^{\mi} =\ldots Q_{-2+\frac{1}{2}}^{k_2} Q_{-1+\frac{1}{2}}^{k_1}\ldots M_{-2}^{j_2}M_{-1}^{j_1}\ldots L_{-2}^{i_2} L_{-1}^{i_1}\in U(\frak g_{-}),$$
 and
	\begin{equation} {\rm w}(\mi, \mj, \mk)=\sum_{n\in\z_+}i_n\cdot n+\sum_{n\in\z_+}j_n\cdot n+\sum_{n\in\z_+}k_n\cdot (n-\frac12),\label{length}\end{equation} which is called the length of $(\mi, \mj, \mk)$ (or the length of $Q^{\mk}M^{\mj}L^{\mi}$).  For convenience, denote by $|\mi|= {\rm w}(\mi, \bf0, \bf0)$,  $|\mj|= {\rm w}(\bf0, \mj, \bf0)$ and $|\mk|= {\rm w}(\bf0, \bf0, \mk)$ for given $Q^\mk M^\mj L^\mi\in U(\frak g_-)$.

	\section{Verma modules over the N=1 BMS  superalgebra}

 In this section, we present a determinant formula for the contravariant form on the Verma modules over the N=1 BMS  superalgebra $\mathfrak{g}$ and establish a necessary and sufficient condition for the simplicity of the Verma modules.	

\subsection{Contravariant forms on Verma modules }

	For $h_1, h_2, {c_1, c_2}\in\cc$, let $\cc $  be a one-dimensional  ${\frak g}_{0}$-module  defined by
	$$ L_01=h_11, \quad M_01=h_2 1,\quad   {\bf c}_11=c_11,\quad  {\bf c}_21=c_21.$$
	Let ${\frak g}_{+}$ act trivially on $1$, making $\mathbb{C} $ be a $({\frak g}_{0}\oplus{\frak g}_{+})$-module.
	The Verma module $M_{\frak g}(h_1, h_2, c_1, c_2)$ over $\frak g$ is defined by
	$$ M_{\frak g}(h_1, h_2, c_1, c_2)=U(\frak g)\otimes_{U(\frak g_{0}\oplus\frak g_{+})}\cc\cong U(\frak g_{-}){\bf 1},\quad (\text{as vector spaces}) ,$$
	where ${\bf 1}=1\otimes 1$. Moreover,
	$M_{\frak g}(h_1, h_2, c_1, c_2)=\bigoplus_{n\in\frac{1}{2}\mathbb{N}}M_{\frak g}(h_1, h_2, c_1, c_2)_n,$
	where
	$$M_{\frak g}(h_1, h_2, c_1, c_2)_n
	=\{v \in M_{\frak g}(h_1, h_2, c_1, c_2)\,|\,L_0v =(h_1+n)v\}.
	$$
	
	According to the Poincar\'{e}-Birkhoff-Witt ($\mathrm{PBW}$)  theorem, every element $v$ of $M_{\frak g}(h_1, h_2, c_1, c_2)$ can be uniquely written in
	the following form
	\begin{equation}\label{def2.1}
		v=\sum_{\mi, \mj\in\mathbb{M}, \mk\in\mathbb{M}_1}a_{\mi, \mj, \mk}Q^{\mk}M^{\mj}L^{\mi} {\bf 1},
	\end{equation}
	where $a_{\mi, \mj, \mk}\in\mathbb C$ and only finitely many of them are nonzero. For any $v\in M_{\frak g}(h_1, h_2, c_1, c_2)$ as in  \eqref{def2.1}, we denote by $\mathrm{supp}(v)$ the set of all $(\mi, \mj, \mk)\in \mathbb M_1\times\mathbb{M}^2$  such that $a_{\mi, \mj, \mk}\neq0$.	

	The Verma module $M_{\frak g}(h_1, h_2, c_1, c_2)$  has
	a unique maximal submodule $J(h_1, h_2, c_1, c_2)$ and the factor module
	$$
	L_{\frak g}(h_1, h_2, c_1, c_2)=M_{\frak g}(h_1, h_2, c_1, c_2)/J(h_1, h_2, c_1, c_2)
	$$
	is simple.

	Let $\langle \cdot ,\cdot \rangle$ be a $\cc$-value bilinear form  on $M_{\frak g}(h_1, h_2, c_1, c_2)$ defined by
	\begin{eqnarray*}
		\langle a{\bf 1} , b{\bf 1} \rangle=\langle {\bf 1} , {\rm P}(\bar a b){\bf 1}\rangle ,\quad  \langle {\bf 1},  {\bf 1}\rangle=1,\quad\forall\  a,b\in U(\frak g_{-} ),
	\end{eqnarray*} 
	where ${\rm P}:U(\frak g)\to U(\frak g_{0})$ be the Harish-Chandra projection, i.e.,
	a projection along the decomposition
	\begin{eqnarray*}
		&&U(\frak g)= U(\frak g_{0}) \oplus (\frak g_{-} U(\frak g) +U(\frak g)\frak g_{+}).
	\end{eqnarray*}
	Since ${\rm P}(\overline{a}b)={\rm P}(\overline{b}a)$, we know that the bilinear form $\langle \cdot ,\cdot \rangle$ is symmetric, i.e.
	\begin{equation}\label{symm}
	\langle a{\bf 1} , b{\bf 1} \rangle=\langle b{\bf 1}, a{\bf 1}\rangle,\quad\forall\  a,b\in U(\frak g_{-} ).
	\end{equation}
Also,  the form $\langle\cdot ,\cdot \rangle$ is contravariant, i.e.,
for $m\in\z, r\in\z+\frac{1}{2}$, $u,v\in M_{\frak g}(h_1, h_2, c_1, c_2)$, we have 	$$
	\langle L_m u, v\rangle=\langle  u, L_{-m}v\rangle,\quad \langle M_m u, v\rangle=\langle  u, M_{-m}v\rangle,\quad\langle Q_r u, v\rangle=\langle  u, Q_{-r}v\rangle.
	$$
	   The simplicity of the Verma module $M_{\frak g}(h_1, h_2, c_1, c_2)$ is encoded in the determinant of the contravariant form. Since distinct weight spaces of $M_{\frak g}(h_1, h_2, c_1, c_2)$ are orthogonal with respect to $\langle\;,\;\rangle$, the Verma module $M_{\frak g}(h_1, h_2, c_1, c_2)$ is simple if and only if the restricted forms
	$$\langle\cdot,\cdot\rangle_n: M_{\frak g}(h_1, h_2, c_1, c_2)_n\times M_{\frak g}(h_1, h_2, c_1, c_2)_n\to \cc$$
	are nondegenerate for all $n\in\frac{1}{2}\mathbb{N}$.
	
	For $n\in\frac{1}{2}\mathbb{N}$, let
	$$
	S_n=\{Q^{\mk}M^{\mj}L^{\mi}{\bf 1} : {\rm w}(\mi, \mj, \mk)=n, \forall\  \mi, \mj\in\mathbb M, \mk\in\mathbb M_1\}.
	$$
	From the PBW theorem, $S_n$ is a basis of $M_{\frak g}(h_1, h_2, c_1, c_2)_n$. Hence
	$$
	|S_n|=p(n):=\#\left\{(\mi,\mj,\mk)\in  \mathbb M^2\times\mathbb M_1 : {\rm w}(\mi, \mj, \mk)=n\right\},
	$$
	where $p(n)$ can be counted by the generating series
	\begin{eqnarray*}
		\sum_{n\in\frac{1}{2}\mathbb{N}} p(n)q^n=\frac{\prod_{k\in\mathbb{N}+\frac{1}{2}}(1+q^k)}{\prod_{k\geq1}(1-q^k)^{2}}.
	\end{eqnarray*}

	Denote by $>$ the  {\em  lexicographical  total order}  on  $\mathbb{M}$,  defined as follows: for any $\mi,\mj\in\mathbb{M}$, set $$\mi> \mathbf{j}\ \Leftrightarrow \ \mathrm{ there\ exists} \ r\in\mathbb Z_+ \ \mathrm{such \ that} \ i_r>j_r\ \mathrm{and} \ i_s=j_s,\ \forall\; s>r.$$

Now we can induce a  principal total order $>$ on  $S_{n}$:	$$Q^{\mk}M^{\mj}L^{\mi} {\bf 1}>Q^{\mk_1}M^{\mj_1}L^{\mi_1}{\bf 1}$$
	if and only if one of the following conditions is satisfied:
	\begin{itemize}
		\item[(i)]$|\mj|>|\mj_1|;$
		\item[(ii)]$|\mj|=|\mj_1|\ \mbox{and}\
		\mj_1>\mj$;
		\item[(iii)]$\mj=\mj_1,\ |\mk|>|\mk_1|;$
		\item[(iv)]
		$\mj=\mj_1,\ |\mk|=|\mk_1|\ \mbox{and}\
		\mk>\mk_1;$
		\item[(v)]$\mj=\mj_1,\ \mk=\mk_1,\ \mbox{and}\
		\mi>\mi_1.$
	\end{itemize}

	Sort the elements in $S_n$ by $>$, for example,
	\begin{align*}
		S_0&=\{{\bf 1}\},\\
		S_{\frac{1}{2}}&=\{Q_{-\frac{1}{2}}{\bf 1}\},\\
		S_{1}&=\{M_{-1}{\bf 1}, L_{-1}{\bf 1}\},\\
		S_{\frac{3}{2}}&=\left\{Q_{-\frac{1}{2}}M_{-1}{\bf 1},Q_{-\frac{3}{2}}{\bf 1}, Q_{-\frac{1}{2}}L_{-1}{\bf 1}\right\},\\
		S_{2}&=\left\{M_{-1}^2{\bf 1},M_{-2}{\bf 1}, M_{-1}L_{-1}{\bf 1},Q_{-\frac{3}{2}}Q_{-\frac{1}{2}}{\bf 1}, L_{-2}{\bf 1},L_{-1}^2{\bf 1}\right\}.
	\end{align*}
	

\subsection{Simplicity of Verma modules }
 
 Let $G_n$ be the Gram matrix of the form  $\langle\cdot ,\cdot \rangle_n$ defined by
	$$
	\langle Q^{\mk}M^{\mj}L^{\mi}{\bf 1}, Q^{\mk_1}M^{\mj_1}L^{\mi_1}{\bf 1}\rangle,
	$$ where ${\rm w}(\mi, \mj, \mk)={\rm w}(\mi_1, \mj_1, \mk_1)=n$.
	
	For $u=Q^{\mk}M^{\mj}L^{\mi}{\bf 1}\in S_n,$
	we define
	\begin{eqnarray*}
		u^*=Q^{\mk}M^{\mi}L^{\mj}{\bf 1}.
	\end{eqnarray*}
Then
	$
	S_{n}^*=\{u^*\mid u\in S_n\}
	$
	is also a basis of $M_{\frak g}(h_1, h_2, c_1, c_2)_n$.  It is clear that
	\begin{align*}
		S^*_0&=\{{\bf 1}\},\\
		S^*_{\frac{1}{2}}&=\{Q_{-\frac{1}{2}}{\bf 1}\},\\
		S^*_{1}&=\{L_{-1}{\bf 1},M_{-1}{\bf 1}\},\\
		S^*_{\frac{3}{2}}&=\left\{ Q_{-\frac{1}{2}}L_{-1}{\bf 1},Q_{-\frac{3}{2}}{\bf 1},Q_{-\frac{1}{2}}M_{-1}{\bf 1}\right\},\\
		S^*_{2}&=\left\{L_{-1}^2{\bf 1},L_{-2}{\bf 1},L_{-1}M_{-1}{\bf 1},Q_{-\frac{3}{2}}Q_{-\frac{1}{2}}{\bf 1},M_{-2}{\bf 1},  M_{-1}^2{\bf 1}\right\}.
	\end{align*}

	

	Let $D_n=(d_{ij})$, where $d_{ij}=\langle b_{i},b_{j}^* \rangle$ for  $i,j=1,\dots p(n)$. Then
	$$D_{\frac12}=(2h_2),\quad D_1=
	\begin{pmatrix}
		2h_2 & 0\\
		2h_1 & 2h_2\\
	\end{pmatrix},\quad D_{\frac{3}{2}}=
	\begin{pmatrix}
		4h_2^2 & 0&0\\
		* & 2h_2+\frac{2}{3}c_2&0\\
		* &*&4h_2^2\\
	\end{pmatrix},
	$$
	$$D_{2}=
	\begin{pmatrix}
		8h_2^2 & 0&0&0&0&0\\
		* & 4h_2+\frac{1}{2}c_2&0&0&0&0\\
		* &*&4h_2^2&0&0&0\\
		* &*&*&2h_2(2h_2+\frac{2}{3}c_2)&0&0\\
		* &*&*&*&4h_2+\frac{1}{2}c_2&0\\
		* &*&*&*&*&8h_2^2\\
	\end{pmatrix}.
	$$

	If $\mj\in\mathbb M$, $m\in\mathbb Z_+$ with $m\ge k$ for all $k\in{\rm supp}(\mj)$, then
	$$
	L_mM^\mj{\bf 1}=\left(2mh_2+\frac{m^3-m}{12}c_2\right)\frac{\partial }{\partial M_{-m}}M^\mj{\bf 1},
	$$
	where $\frac{\partial }{\partial M_{-m}}$ is  the operation of formal partial derivative defined by  the Leibniz rule  and
	$$\frac{\partial }{\partial M_{-m}}(M_{-n})=\delta_{m,n},\quad \frac{\partial }{\partial M_{-m}}({\bf 1})=0.$$
		It follows  immediately that if $\mi >\mj$,
	\begin{equation}\label{LM}
		\left\langle  L^{\mi}{\bf 1}, M^{\mj} {\bf 1}\right\rangle =L_{1}^{i_1}L_2^{i_2}\cdots M_{-2}^{j_2}M_{-1}^{j_1}{\bf 1}=0.
	\end{equation}
	
	If $\mk\in \mathbb M_1$, $t\in \mathbb{Z}_+$ and $k_t=1, k_{t+i}=0$, then
	$$
	Q_{t-\frac{1}{2}}Q^\mk{\bf 1}=\left(2h_2+\frac{4t^2-1}{12}c_2\right)Q^{\mk''}{\bf 1}.
	$$
	It follows  immediately that if $\mi>\mi_1$,
	\begin{equation}\label{QQ}
		\left\langle  Q^{\mi}{\bf 1},Q^{\mi_1} {\bf 1}\right\rangle =0.
	\end{equation}

	\begin{lemm}\label{l3.1}
		The matrix $D_n$ is lower triangular for any $n\in\frac12 \z_+$. Moreover, the diagonal entries of $D_n$ are
		$$d_{ii}=\prod_{r\in {\rm supp}(\mi)}r\left(2h_2+\frac{r^2-1}{12}c_2\right)^{i_r}\prod_{t\in{\rm supp}(\mk)}\left(2h_2+\frac{4t^2-1}{12}c_2\right)^{k_t}
		\prod_{s\in {\rm supp}(\mj) } s\left(2h_2+\frac{s^2-1}{12}c_2\right)^{j_s},
		$$
		where $b_i=Q^{\mk}M^{\mj}L^{\mi}{\bf 1}\in S_n$.
	\end{lemm}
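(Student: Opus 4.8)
The plan is to analyze the matrix entries $d_{ij} = \langle b_i, b_j^* \rangle$ with $b_i = Q^{\mk}M^{\mj}L^{\mi}\mathbf 1$ and $b_j^* = Q^{\mk_1}M^{\mi_1}L^{\mj_1}\mathbf 1$ (so $b_j = Q^{\mk_1}M^{\mj_1}L^{\mi_1}\mathbf 1$), where $\mathrm w(\mi,\mj,\mk) = \mathrm w(\mi_1,\mj_1,\mk_1) = n$, and to show that with respect to the principal total order on $S_n$ the form vanishes strictly above the diagonal and has the stated value on the diagonal. The essential structural fact is that in $U(\mathfrak g_-)$ the elements $M_m$ are central modulo positive modes, $[Q_r,Q_s] = 2M_{r+s} + (\text{central})$ lands in the $M$-part, and $[L_m,M_n]$ produces $M$'s (plus central). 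Consequently, when one commutes a raising operator in $\bar b_i$ past the lowering operators of $b_j^*$ to reach the Harish-Chandra projection, the \emph{total $M$-degree} $|\mj|+|\mj_1|$ is non-increasing, and the $Q$-count $\ell(\mk)$ behaves analogously once the $M$-degrees are matched. This is exactly what conditions (i)–(v) defining the order encode.

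First I would set up the comparison in stages matching the five clauses. Using contravariance, $\langle b_i,b_j^*\rangle = \langle \mathbf 1, \overline{b_i}\,b_j^*\,\mathbf 1\rangle$, and I move the operators $L,M,Q$ of $\overline{b_i} = L^{\mj}M^{\mi}Q^{\mk}$ (now raising operators, with signs) to the right through $b_j^*$. Because $[\mathfrak g_{\ge 1},\,\cdot\,]$ applied to a product of lowering operators can only \emph{decrease} the weight and can only produce new $M$'s or central terms when acting on $M$'s and $Q$'s, and can produce $L$'s only from $L$'s: a term surviving to $U(\mathfrak g_0)$ with nonzero projection forces, at the top level, $|\mj| \ge |\mj_1|$; if $|\mj| = |\mj_1|$ the $M$-content must be matched exactly, which by \eqref{LM} and the weight bookkeeping forces $\mj = \mj_1$ unless $\mj_1 > \mj$ (giving strict-upper-triangular vanishing via clauses (i),(ii)); then with $\mj=\mj_1$ fixed, the $Q$-modes must pair off via $[Q_r,Q_s]\mapsto 2M_{r+s}$ hitting the already-present $M$'s, which by \eqref{QQ} forces $|\mk|\ge|\mk_1|$ and then $\mk=\mk_1$ unless $\mk>\mk_1$ (clauses (iii),(iv)); finally with $\mj,\mk$ matched the remaining $L$–$L$ pairing is governed by the Virasoro subalgebra and the standard argument ($\langle L^{\mi}\mathbf 1, L^{\mj_1}\mathbf 1\rangle = 0$ unless $\mi \ge \mj_1$ in lex order after the $M,Q$-data is fixed), giving clause (v). This establishes lower-triangularity of $D_n$.

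For the diagonal entries I would compute $\langle b_i, b_i^*\rangle = \langle Q^{\mk}M^{\mj}L^{\mi}\mathbf 1,\ Q^{\mk}M^{\mi}L^{\mj}\mathbf 1\rangle$ directly by iterated use of the three displayed reduction formulas: applying $L_m M^{\mj}\mathbf 1 = (2mh_2 + \tfrac{m^3-m}{12}c_2)\tfrac{\partial}{\partial M_{-m}}M^{\mj}\mathbf 1$ peels off one $M_{-s}$ at a time contributing a factor $s(2h_2 + \tfrac{s^2-1}{12}c_2)$ for each of the $j_s$ copies; applying $Q_{t-\frac12}Q^{\mk}\mathbf 1 = (2h_2 + \tfrac{4t^2-1}{12}c_2)Q^{\mk''}\mathbf 1$ peels off each $Q$-mode contributing $(2h_2 + \tfrac{4t^2-1}{12}c_2)^{k_t}$; and symmetrically the $L^{\mj}$ acting against the $M^{\mi}$-block contributes $\prod_{r\in\mathrm{supp}(\mi)} r(2h_2 + \tfrac{r^2-1}{12}c_2)^{i_r}$. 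One checks inductively that only the "diagonal" pairing of modes survives at each step (any mismatched mode either raises the weight too far or is killed by \eqref{LM}, \eqref{QQ}), so the product telescopes to the asserted formula. The main obstacle, and the step deserving the most care, is the bookkeeping in the lower-triangularity argument: one must verify that commuting a raising $L_m$ past the $Q$- and $M$-blocks cannot create a term that "cheats" the order — e.g. produce extra $M$'s that would spuriously raise $|\mj_1|$ — which requires tracking that each such commutator strictly lowers weight and hence is compensated, so that equality in the order-defining quantities is only achieved by the exact diagonal pairing. I would handle this by induction on $n$ (equivalently on $\ell(\mi)+\ell(\mj)+\ell(\mk)$), reducing each commutator to a strictly shorter configuration and invoking the inductive hypothesis together with \eqref{LM} and \eqref{QQ}.
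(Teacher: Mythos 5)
Your proposal follows essentially the same route as the paper's proof: reduce $d_{ij}$ to $\langle {\bf 1}, \overline{b_i}\,b_j^{*}\,{\bf 1}\rangle$, exploit that the $M$'s commute with the $M$- and $Q$-blocks so the inner product factors, dispose of the five clauses of the principal order via the reduction formulas underlying \eqref{LM} and \eqref{QQ}, and obtain the diagonal by telescoping those same formulas. One point to correct: in clause (v) there is no ``$L$--$L$ pairing governed by the Virasoro subalgebra''; the entire purpose of passing to $b_j^{*}$ is that $L^{\mi}$ from $b_i$ is contracted against $M^{\mi_1}$ from $b_j^{*}$ (and $M^{\mj}$ against $L^{\mj_1}$), so the relevant vanishing is $\overline{L^{\mi}}M^{\mi_1}{\bf 1}=0$ from \eqref{LM}, an $L$-versus-$M$ statement, not a Virasoro/Kac-determinant computation --- your own (correct) diagonal formula already reflects this, so the parenthetical $\langle L^{\mi}{\bf 1}, L^{\mj_1}{\bf 1}\rangle$ should read $\langle L^{\mi}{\bf 1}, M^{\mi_1}{\bf 1}\rangle$.
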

	\begin{proof}
		Suppose that		$b_i=Q^{\mk}M^{\mj}L^{\mi}{\bf 1} >Q^{\mk_1}M^{\mj_1}L^{\mi_1}{\bf 1}=b_j $ with $Q^{\mk}M^{\mj}L^{\mi}{\bf 1},Q^{\mk_1}M^{\mj_1}L^{\mi_1}{\bf 1}\in S_n.$ It suffices to show
  $$\left\langle b_i, b_j^*\right\rangle=
\left\langle Q^{\mk}M^{\mj}L^{\mi}{\bf 1}, \left(Q^{\mk_1}M^{\mj_1}L^{\mi_1}{\bf 1}\right)^*\right\rangle=0.
  $$
		\begin{itemize}
			\item If $|\mj|>|\mj_1|$, then
			$$0=\left\langle M^\mj{\bf 1}, L^{\mj_1}{\bf 1} \right\rangle=\left\langle {\bf 1}, \overline{M^\mj}L^{\mj_1}{\bf 1} \right\rangle \Rightarrow \overline{M^\mj}L^{\mj_1}{\bf 1}=0.$$
			It follows that
			\begin{align*}
				d_{ij}&=\left\langle  Q^\mk M^\mj L^\mi{\bf 1}, Q^{\mk_1}M^{\mi_1}L^{\mj_1}{\bf 1} \right\rangle\\
				&=\left\langle {\bf 1}, \overline{L^\mi}\cdot \overline{M^\mj}\cdot \overline{Q^\mk}\cdot Q^{\mk_1}M^{\mi_1}L^{\mj_1}{\bf 1} \right\rangle\\
				&=\left\langle {\bf 1}, \overline{L^\mi}\cdot\overline{Q^\mk}\cdot Q^{\mk_1}M^{\mi_1}\overline{M^\mj}L^{\mj_1}{\bf 1} \right\rangle\\
				&=0.
			\end{align*}
			
			
			
			\item If $|\mj|=|\mj_1|\ \mbox{and}\
			\mj<\mj_1$, it follows from \eqref{symm} and (\ref{LM}) that $\overline{M^\mj}L^{\mj_1}{\bf 1}=\overline{L^{\mj_1}}M^\mj{\bf 1}=0$. Then
			\begin{align*}
				d_{ij}&=\left\langle Q^\mk M^\mj L^\mi{\bf 1},Q^{\mk_1}M^{\mi_1}L^{\mj_1}{\bf 1} \right\rangle\\
				&=\left\langle {\bf 1}, \overline{L^\mi}\cdot \overline{M^\mj}\cdot \overline{Q^\mk}\cdot Q^{\mk_1}M^{\mi_1}L^{\mj_1}{\bf 1} \right\rangle\\
    &=\left\langle {\bf 1}, \overline{L^\mi}\cdot \overline{Q^\mk}\cdot Q^{\mk_1}M^{\mi_1}\overline{M^\mj}L^{\mj_1}{\bf 1} \right\rangle\\
				&=0.
			\end{align*}
			
			
			\item If
			$\mj=\mj_1,\ |\mk|>|\mk_1|$ or  $|\mk|=|\mk_1|, \mk>\mk_1$,  it is clear that $\overline{Q^\mk}Q^{\mk_1}{\bf 1} =0$. It follows that
			\begin{align*}
				d_{ij}&=\left\langle Q^\mk M^\mj L^\mi{\bf 1},Q^{\mk_1}M^{\mi_1}L^{\mj_1}{\bf 1} \right\rangle\\
				&=\left\langle {\bf 1}, \overline{L^{\mi}}M^{\mi_1}\overline{Q^{\mk}}Q^{\mk_1}{\bf 1} \right\rangle\left\langle  {\bf 1}, \overline{M^\mj} L^{\mj_1}{\bf 1} \right\rangle\\
				&=0.
			\end{align*}
			
			\item If $\mj=\mj_1,\ \mk=\mk_1,\ \mbox{and}\
			\mi>\mi_1$,  it follows from (\ref{LM}) that $\overline{L^\mi}M^{\mi_1} {\bf 1} =0$. Then
			\begin{align*}
				d_{ij}&=\left\langle Q^\mk M^\mj L^\mi{\bf 1},Q^{\mk_1}M^{\mi_1}L^{\mj_1}{\bf 1} \right\rangle\\
				&=\left\langle {\bf 1}, \overline{L^\mi}\cdot \overline{M^\mj}\cdot \overline{Q^\mk}\cdot Q^{\mk_1}M^{\mi_1}L^{\mj_1}{\bf 1} \right\rangle\\
				&=\left\langle {\bf 1}, \overline{L^\mi} M^{\mi_1}{\bf 1}\right\rangle \left\langle {\bf 1},\overline{Q^\mk} Q^{\mk_1}{\bf 1} \right\rangle \left\langle  {\bf 1}, \overline{M^\mj} L^{\mj_1}{\bf 1} \right\rangle\\
				&=0.
			\end{align*}
		\end{itemize}

		Moreover,
		\begin{align*}
			d_{ii}&=\left\langle {\bf 1}, \overline{L^\mi} M^{\mi}\right\rangle \left\langle {\bf 1},\overline{Q^\mk} Q^{\mk}{\bf 1} \right\rangle \left\langle  {\bf 1}, \overline{M^\mj} L^{\mj}{\bf 1} \right\rangle\\
			&=\prod_{r\in {\rm supp}(\mi)}r\left(2h_2+\frac{r^2-1}{12}c_2\right)^{i_r}\prod_{t\in{\rm supp}(\mk)}\left(2h_2+\frac{4t^2-1}{12}c_2\right)^{k_t}
			\prod_{s\in {\rm supp}(\mj) } s\left(2h_2+\frac{s^2-1}{12}c_2\right)^{j_s}
		\end{align*}
		for $i=1,2,\dots p(n)$.
	\end{proof}

 We are now in the position to state the main result in this section:
	
	\begin{theo}\label{main1}The Verma module $M_{\frak g}(h_1, h_2, c_1, c_2)$  is simple if and only if
		\begin{equation*}
			h_{2}+\frac{i^2-1}{24}c_{2}\neq 0,\quad \forall \ i\in \z_+.
		\end{equation*}
	\end{theo}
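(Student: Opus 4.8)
The plan is to translate simplicity of $M_{\frak g}(h_1,h_2,c_1,c_2)$ into the non-vanishing of the determinants $\det D_n$, and then read off the condition directly from Lemma \ref{l3.1}. As recalled before the statement, since distinct weight spaces are orthogonal and each $M_{\frak g}(h_1,h_2,c_1,c_2)_n$ is finite-dimensional (of dimension $p(n)$), the module is simple if and only if the restricted form $\langle\cdot,\cdot\rangle_n$ is nondegenerate for every $n\in\frac12\mathbb N$. The first step is to note that $S_n^{*}=\{u^{*}\mid u\in S_n\}$ is again a basis of $M_{\frak g}(h_1,h_2,c_1,c_2)_n$ (indeed $Q^{\mk}M^{\mj}L^{\mi}{\bf 1}\mapsto Q^{\mk}M^{\mi}L^{\mj}{\bf 1}$ is a length-preserving involution of $S_n$, hence a permutation of $S_n$), so $D_n=(\langle b_i,b_j^{*}\rangle)$ is the matrix of $\langle\cdot,\cdot\rangle_n$ with respect to the pair of bases $S_n$ and $S_n^{*}$; consequently $\langle\cdot,\cdot\rangle_n$ is nondegenerate if and only if $\det D_n\neq0$, and $M_{\frak g}(h_1,h_2,c_1,c_2)$ is simple if and only if $\det D_n\neq0$ for all $n\in\frac12\mathbb N$.

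Next I would compute these determinants with Lemma \ref{l3.1}: since $D_n$ is lower triangular, $\det D_n=\prod_{i=1}^{p(n)}d_{ii}$, and the diagonal formula exhibits each $d_{ii}$ as a product of factors of exactly two shapes, namely $r\bigl(2h_2+\tfrac{r^2-1}{12}c_2\bigr)$ with $r\in{\rm supp}(\mi)\cup{\rm supp}(\mj)\subseteq\z_+$, and $2h_2+\tfrac{4t^2-1}{12}c_2$ with $t\in{\rm supp}(\mk)\subseteq\z_+$. Hence $\prod_{n\in\frac12\mathbb N}\det D_n\neq0$ exactly when every such factor is nonzero. Since $2h_2+\tfrac{k^2-1}{12}c_2=2\bigl(h_2+\tfrac{k^2-1}{24}c_2\bigr)$ and $4t^2-1=(2t)^2-1$, every factor occurring is a nonzero scalar multiple of $h_2+\tfrac{k^2-1}{24}c_2$ for some $k\in\z_+$ ($k=r$ in the first case, $k=2t$ in the second). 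Therefore the hypothesis $h_2+\tfrac{i^2-1}{24}c_2\neq0$ for all $i\in\z_+$ forces every $d_{ii}$, hence every $\det D_n$, to be nonzero, so $M_{\frak g}(h_1,h_2,c_1,c_2)$ is simple.

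Conversely, for each $i\in\z_+$ the basis vector $L_{-i}{\bf 1}\in S_i$ has diagonal entry $i\bigl(2h_2+\tfrac{i^2-1}{12}c_2\bigr)=2i\bigl(h_2+\tfrac{i^2-1}{24}c_2\bigr)$ by Lemma \ref{l3.1}; so if $h_2+\tfrac{i^2-1}{24}c_2=0$ for some $i\in\z_+$, then $\det D_i=0$, the form $\langle\cdot,\cdot\rangle_i$ degenerates, and $M_{\frak g}(h_1,h_2,c_1,c_2)$ is not simple. The two implications together give the theorem.

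I expect no real obstacle, because the substantive content --- triangularity of $D_n$ and the diagonal formula --- is already supplied by Lemma \ref{l3.1}. The only points requiring a little care are the elementary observation that a matrix built from two bases related by a permutation detects nondegeneracy exactly as a genuine Gram matrix does, and the bookkeeping that the $\mk$-type factors $2h_2+\tfrac{4t^2-1}{12}c_2$ produce no critical values beyond the family $\{h_2+\tfrac{i^2-1}{24}c_2:i\in\z_+\}$ already contributed by the $\mi$- and $\mj$-type factors (they merely reproduce the even indices $i=2t$).
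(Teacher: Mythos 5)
Your proposal is correct and follows essentially the same route as the paper: reduce simplicity to nondegeneracy of each $\langle\cdot,\cdot\rangle_n$, identify that with $\det D_n\neq 0$ via the permuted basis $S_n^{*}$, and read the condition off the diagonal entries supplied by Lemma \ref{l3.1}. Your version is slightly more careful than the paper's (justifying that $D_n$ detects nondegeneracy and exhibiting $L_{-i}{\bf 1}$ for the converse), but the substance is identical.
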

	\begin{proof}For all $n\in\frac{1}{2}\mathbb{N}$, the restricted form
	$$\langle\cdot,\cdot\rangle_n: M_{\frak g}(h_1, h_2, c_1, c_2)_n\times M_{\frak g}(h_1, h_2, c_1, c_2)_n\to \cc$$
is nondegenerate if and only if the determinant of the Gram matrix $G_n$ is nonzero,  if and only if ${\rm det}\,D_n\neq 0$. By applying Lemma \ref{l3.1}, we can directly conclude that ${\rm det}\,D_n=\prod_{i=1}^{p(n)}d_{ii}\neq 0$ if and only if
\begin{equation*}
h_{2}+\frac{i^2-1}{24}c_{2}\neq 0, \quad \forall \ i\in \z_+.
\end{equation*}
\end{proof}

As a corollary of Theorem \ref{main1}, we know that $M_{\frak g}(0,0,c_1,c_2)$ is not simple.  In fact, $ Q_{-\frac{1}{2}}{\bf 1}$ is a singular vector of $M(0,0,c_1,c_2)$, and then $L_{-1}{\bf 1}$ is a subsingular vector. 
	The {\it vaccum module} for $\frak g$ is
	the quotient module
	$$
	V_{\frak g}(c_1,c_2)=M_{\frak g}(0,0,c_1,c_2)/J,
	$$
	where $J$ is the submodule of $V_{\frak g}(0,0,c_1,c_2)$ generated by  $L_{-1}{\bf 1}$. Denote by ${\mathds{1} }:={\bf 1}+J$  the vacuum vector of $V_{\frak g}(c_1,c_2)$. Using a similar argument as Theorem \ref{main1}, 	one obtains the following criterion of
	simplicity of the vacuum module:
	\begin{theo}
		The  vaccum module $V_{\frak g}(c_1,c_2)$  is simple if and only if    $c_2\neq 0$.  
	\end{theo}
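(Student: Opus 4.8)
The plan is to imitate the proof of Theorem~\ref{main1}, carried over to the vacuum quotient. Write $M=M_{\frak g}(0,0,c_1,c_2)$ and $V=V_{\frak g}(c_1,c_2)=M/J$ with $J=U(\frak g)L_{-1}\mathbf 1$. Since $L_{-1}\mathbf 1$ generates a proper submodule of $M$, the submodule $J$ is contained in the radical of the contravariant form $\langle\cdot,\cdot\rangle$ on $M$; hence $\langle\cdot,\cdot\rangle$ descends to a symmetric contravariant bilinear form on $V$, and $V$ is simple if and only if the induced forms $\langle\cdot,\cdot\rangle_n\colon V_n\times V_n\to\cc$ are nondegenerate for all $n\in\frac12\mathbb N$.

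The next step is to write down a PBW-type basis of each $V_n$. From $Q_{1/2}L_{-1}\mathbf 1=Q_{-1/2}\mathbf 1$ and $Q_{-1/2}^2\mathbf 1=M_{-1}\mathbf 1$ we get $Q_{-1/2}\mathbf 1,\,M_{-1}\mathbf 1\in J$, so the images in $V$ of the monomials $Q^{\mk}M^{\mj}L^{\mi}\mathbf 1$ with $i_1=j_1=k_1=0$ span $V$. I claim these in fact form a basis of $V$; establishing this is equivalent to determining the maximal submodule $J$ precisely, and can be done either by a direct count in low weights together with an induction on the weight, or by invoking the reconstruction theorem for the associated N=1 BMS vertex superalgebra, which is strongly generated by the states $L_{-2}\mathbf 1,\,M_{-2}\mathbf 1,\,Q_{-3/2}\mathbf 1$. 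Let $\bar S_n$ denote this set of length-$n$ monomials, ordered by the principal order of Section~3; the $*$-map of Section~3 restricts to a bijection $\bar S_n\to\bar S_n$, so $\bar S_n^{\,*}$ is again a basis of $V_n$, and $\langle\cdot,\cdot\rangle_n$ is nondegenerate if and only if $\det\bar D_n\ne 0$, where $\bar D_n=\big(\langle b_i,b_j^{\,*}\rangle\big)_{b_i,b_j\in\bar S_n}$.

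The final step is to rerun the proof of Lemma~\ref{l3.1} verbatim for $\bar S_n$. The vanishing identities \eqref{LM} and \eqref{QQ} involve only the form, hence remain valid in $V$, so $\bar D_n$ is lower triangular and its diagonal entry $\bar d_{ii}$ attached to $b_i=Q^{\mk}M^{\mj}L^{\mi}\mathbf 1$ is the expression of Lemma~\ref{l3.1} with $h_1=h_2=0$. Since $i_1=j_1=k_1=0$, every index occurring in that product is $\ge 2$, so each factor is a strictly positive rational number times $c_2$; consequently $\bar d_{ii}$ is a nonzero rational multiple of $c_2^{\,\ell(\mi)+\ell(\mj)+\ell(\mk)}$, and $\bar d_{ii}\ne 0$ if and only if $c_2\ne 0$ or $b_i=\mathds 1$. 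Hence $\det\bar D_n=\prod_i\bar d_{ii}\ne 0$ for every $n$ precisely when $c_2\ne 0$, which gives the ``if'' direction. For the converse, when $c_2=0$ we have $\bar S_2=\{M_{-2}\mathds 1,\,L_{-2}\mathds 1\}$ and $\langle M_{-2}\mathds 1,L_{-2}\mathds 1\rangle=\tfrac{c_2}{2}=0$, so $\langle\cdot,\cdot\rangle_2$ is degenerate; concretely $M_{-2}\mathds 1$ is a singular vector of $V_{\frak g}(c_1,0)$. The one step I expect to cost real work is the basis statement for $V$ --- equivalently, the explicit description of the maximal submodule $J$; everything else is just the $h_1=h_2=0$ specialization of the computation already carried out in Section~3.
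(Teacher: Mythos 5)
Your argument is correct and is precisely the ``similar argument as Theorem \ref{main1}'' that the paper leaves implicit: the form descends to the quotient, the Gram matrix of Lemma \ref{l3.1} with $h_1=h_2=0$ is restricted to the monomials avoiding $L_{-1},M_{-1},Q_{-1/2}$, and every surviving diagonal entry is a positive rational multiple of a power of $c_2$, while $M_{-2}\mathds 1$ is singular when $c_2=0$. The basis step you flag as the only real work follows cleanly from observing that $V_{\frak g}(c_1,c_2)$ is the module induced from the trivial one-dimensional representation of the subalgebra $\bigoplus_{n\ge -1}(\cc L_n\oplus\cc M_n)\oplus\bigoplus_{r\ge -\frac12}\cc Q_r\oplus\cc{\bf c}_1\oplus\cc{\bf c}_2$ --- your identities $Q_{1/2}L_{-1}{\bf 1}=Q_{-1/2}{\bf 1}$ and $Q_{-1/2}^2{\bf 1}=M_{-1}{\bf 1}$ show that the kernel of the natural surjection from $M_{\frak g}(0,0,c_1,c_2)$ onto this induced module is exactly $J$, after which the PBW theorem gives the basis.
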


\section{Classifications of simple smooth $\mathfrak{g}$-modules}

	In this section, we construct and classify all simple smooth $\mathfrak{g}$-modules of central charge $(c_1,c_2)$   under certain conditions.

	\subsection{Constructing simple smooth $\mathfrak{g}$-modules}

	For any $m\in\mathbb N, n\in\mathbb Z$ with $n\le 1$, let
	\begin{eqnarray*}
		\frak g^{(m, n)}&=&\bigoplus_{i\geq m}\cc L_i\oplus\bigoplus_{j\geq n}\cc M_j\oplus\bigoplus_{k\ge 1}\cc Q_{k-\frac12}\oplus\mathbb C{\bf c}_1\oplus\mathbb C{\bf c}_2.
	\end{eqnarray*}
	Note that $\frak g^{(m, n)}$ is a subalgebra of $\frak g$  and $\mathfrak g^{(0, 0)}=\frak g_+\oplus\frak g_0$.

	Given $q\in\mathbb N$ and 
	a $\frak g^{(0, -q)}$-module $V$ of  central charge   $(c_1, c_2)$.  We consider the following  induced $\frak g$-module
	$$
	{\rm Ind}_q(V)={\rm Ind}^{\mathfrak g}_{\mathfrak g^{(0, -q)}} V.
	$$
 According to the {PBW}  theorem, every element $v$ of ${\rm Ind}_q(V)$ can be uniquely written in the following form
	\begin{equation}\label{def4.1}
		v=\sum_{\mi, \mj\in\mathbb{M}, \mk\in\mathbb{M}_1}Q^{\mk}M^{\mj}L^{\mi} v_{\mi, \mj, \mk},
	\end{equation}
	where all  $v_{\mi, \mj,\mk}\in V$ and only finitely many of them are nonzero.


	Denote by $\prec$ the  {\em reverse  lexicographical  total order}  on  $\mathbb{M}$,  defined as follows: for any $\mi,\mk\in\mathbb{M}$, set $$\mi\prec \mathbf{j}\ \Leftrightarrow \ \mathrm{ there\ exists} \ r\in\z_+ \ \mathrm{such \ that} \ i_r <j_r\ \mathrm{and} \ i_s=j_s,\ \forall\; 1\le s<r.$$

	Next we introduce a principal total order on $\mathbb{M}\times\mathbb{M}_1$ as follows.

	Denoted by $\prec$:
	$(\mj,\mk) \prec (\mj_1,\mk_1)$ if and only if one of the following conditions is satisfied:

	(1)  $\mk<\mk_1$;
	
	(2) $\ \mk=\mk_1\ \mathrm{and }\ \mj < \mj_1$.

	Now we can induce a  principal total order on $\mathbb{M}^2\times\mathbb{M}_1$, still denoted by $\prec$:
	$(\mi, \mj, \mk) \prec (\mi_1, \mj_1, \mk_1) $ if and only if one of the following conditions is satisfied:

	(1) ${\rm w}(\mi, \mj, \mk)<{\rm w}(\mi_1, \mj_1, \mk_1)$;
	
	(2) ${\rm w}(\mi, \mj, \mk)={\rm w}(\mi_1, \mj_1, \mk_1)$  and $\mi_1\prec\mi$;
	
	(3) ${\rm w}(\mi, \mj, \mk)={\rm w}(\mi_1, \mj_1, \mk_1)$ and $\mi =\mathbf{i}_1$ and $(\mj, \mk) \prec (\mj_1, \mk_1)$.

 For any $v\in{\rm Ind}_q(V)$ as in  \eqref{def2.1}, we denote by $\mathrm{supp}(v)$ the set of all $(\mi, \mj, \mk)\in \mathbb{M}^2\times\mathbb{M}_1$  such that $v_{\mi, \mj, \mk}\neq0$.
 For a nonzero $v\in {\rm Ind}_q(V)$, we write $\mathrm{deg}(v)$  the maximal (with respect to the principal total order on $\mathbb{M}^2\times\mathbb{M}_1$) element in $\mathrm{supp}(v)$, called the { degree} of $v$. Note that here and later
 we make the
 convention that  $\mathrm{deg}(v)$ is only for  $v\neq0$.

\begin{theo}\label{main3}
For $q\in \mathbb{N}$, $c_1,c_2\in \mathbb{C}$, let $V$ be
	a simple $\frak g^{(0, -q)}$-module  of  central charge   $(c_1, c_2)$.
Assume that there exists  $t\in\mathbb N$ satisfying the following two conditions:
		\begin{itemize}
			\item[{\rm (a)}] the action of $M_t$ on $V$ is injective if $t>0$ or the action of $M_0+\frac{1}{24}(n^2-1){\bf c}_2$ on $V$ is injective for all $n\in \mathbb{Z}^*$ if $t=0$;

			\item[{\rm (b)}] $M_iV=0$ for all $i>t$  and $L_jV=0$ for all $j>t+q$.
		\end{itemize}
		Then
		\begin{itemize}
			\item[{\rm (i)}]  $Q_{i-\frac12}V=0$ for all $i>t$;
			\item[{\rm (ii)}] The induced module ${\rm Ind}_q(V)$ is a simple  module in $\mathcal{R}_{\mathfrak g}(c_1,c_2)$.
		\end{itemize}
\end{theo}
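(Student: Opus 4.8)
The plan is to follow the standard "degree argument'' for induced modules over graded Lie (super)algebras, adapted to the two-parameter truncation $\mathfrak g^{(0,-q)}$. First I would establish (i): since $2M_{r+s}$ (up to a central term) equals $[Q_r,Q_s]$, for $i>t$ pick $r,s\in\z+\frac12$ with $r+s=2i-1$ and $r,s$ large; then $Q_{r}Q_{s}V\subseteq Q_rV$ and one wants to deduce $Q_{i-\frac12}V=0$ from $M_jV=0$ for $j>t$ together with condition (b) and the commutation relations $[L_m,Q_r]=(\tfrac m2-r)Q_{m+r}$. Concretely, for $i>t$ one has $M_{2i-1}V=0$ and $2M_{2i-1}=[Q_{i-\frac12},Q_{i-\frac12}]=2Q_{i-\frac12}^2$ acting on $V$ after accounting for the central scalar, which forces $Q_{i-\frac12}^2V=0$; combined with $[L_j,Q_{i-\frac12}]$ raising indices and the injectivity hypothesis (a) on $M_t$ (or on $M_0+\tfrac1{24}(n^2-1)\mathbf c_2$), one bootstraps down to $Q_{i-\frac12}V=0$ for every $i>t$. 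I expect this step to require a small amount of care with the $t=0$ versus $t>0$ split and with the Clifford-type relation $Q_{i-\frac12}^2=M_{2i-1}+(\text{central})$, but no genuine difficulty.

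For (ii), smoothness of $\mathrm{Ind}_q(V)$ is immediate from the PBW form \eqref{def4.1}: any element involves only finitely many generators, and raising operators $L_k,M_k,Q_{k-\frac12}$ with $k$ large eventually push every PBW monomial into $\mathfrak g^{(0,-q)}U(\mathfrak g_-)$ modulo lower terms, where they act by $0$ or by the given $V$-action which is eventually trivial by (b) and (i). The central elements act by $(c_1,c_2)$, so $\mathrm{Ind}_q(V)\in\mathcal R_{\mathfrak g}(c_1,c_2)$. The core of (ii) is simplicity. Let $W$ be a nonzero submodule; I would pick $0\neq w\in W$ with $\mathrm{deg}(w)=(\mi,\mj,\mk)$ minimal among all nonzero elements of $W$ with respect to the principal total order on $\mathbb M^2\times\mathbb M_1$. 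The goal is to show this minimal degree must be $(\mathbf 0,\mathbf 0,\mathbf 0)$, i.e.\ $w\in V$; then $U(\mathfrak g^{(0,-q)})w$ is a nonzero submodule of the simple module $V$, hence equals $V$, hence $W\supseteq V$ and $W=\mathrm{Ind}_q(V)$.

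To force the degree down I would apply suitable elements of $\mathfrak g_+$ to $w$ and track how they act on the leading PBW monomial $Q^{\mk}M^{\mj}L^{\mi}v$. If $\mi\neq\mathbf 0$, applying $L_{p}$ for an appropriate $p$ (chosen so that $[L_p,\cdot]$ annihilates the $M$- and $Q$-parts and lowers the $L$-part by one slot, using the injectivity of $M_t$ or of the shifted $M_0$ to guarantee the resulting coefficient in $V$ is nonzero) produces a nonzero element of $W$ of strictly smaller degree, a contradiction; similarly one removes the $Q$-part using a single $Q_{p-\frac12}$ whose bracket with a $Q_{-r}$ yields $M$'s that are killed or invertible, and finally one removes the $M$-part using $M_t$ (here condition (a) is exactly what is needed, since $M_t$ must act invertibly to detect the leading $M$-coefficient). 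The commutation relations guarantee the "error terms'' produced have strictly lower degree, so the leading behaviour is governed by a single injective operator on $V$. The main obstacle will be the careful bookkeeping in this last reduction: one must choose the test operator $L_p$, $M_t$, or $Q_{p-\frac12}$ so that (1) it acts on $V$ through an \emph{injective} map (this is where hypotheses (a) and the derived (i) are essential, and why the $t=0$ case needs the family $M_0+\tfrac1{24}(n^2-1)\mathbf c_2$), and (2) all commutators with the remaining PBW factors strictly decrease the principal order — which requires verifying that the reverse lexicographic order on the $L$-part interacts correctly with the index shifts coming from $[L_m,L_n]=(m-n)L_{m+n}$. I would organize this as a lemma computing $\mathrm{deg}\big(x\cdot Q^{\mk}M^{\mj}L^{\mi}v\big)$ for $x\in\{L_p,M_p,Q_{p-\frac12}\}$ with $p$ large, and then run an induction on $\mathrm{deg}(w)$.
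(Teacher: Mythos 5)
Your overall architecture (induce from $V$, take a minimal-degree element of a putative proper submodule, hit it with raising operators whose leading commutator lands on an operator that acts injectively on $V$) is exactly the paper's, but two of your concrete choices would not go through.

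For (i), knowing $Q_{i-\frac12}^2V=M_{2i-1}V=0$ does not by itself give $Q_{i-\frac12}V=0$, and the ``bootstrap'' you propose via $[L_j,Q_{i-\frac12}]$ and injectivity of $M_t$ has no visible mechanism: the only way the relation $[Q_r,Q_s]=2M_{r+s}+(\mathrm{central})$ reaches the injective $M_t$ from $Q_{i-\frac12}$ with $i>t$ is by pairing it with the negative mode $Q_{t-i+\frac12}$, which does not lie in $\mathfrak g^{(0,-q)}$ and hence does not act on $V$. The missing idea is to use simplicity of $V$ directly: $W=Q_{i-\frac12}V$ is a $\mathfrak g^{(0,-q)}$-submodule of $V$ (since $Q_{j-\frac12}Q_{i-\frac12}=2M_{i+j-1}-Q_{i-\frac12}Q_{j-\frac12}$ and $M_{i+j-1}V=0$ for $j\ge1$), and $W=V$ is impossible because it would give $V=Q_{i-\frac12}W=M_{2i-1}V=0$; hence $W=0$. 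Condition (a) plays no role in this part.

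For (ii), you have the test operators assigned to the wrong PBW blocks. The only operators with an injectivity guarantee on $V$ are the $M$'s, and the relevant brackets are $[M_p,L_{-n}]=(p+n)M_{p-n}$, $[L_p,M_{-n}]=(p+n)M_{p-n}$, $[Q_s,Q_{-r}]=2M_{s-r}+(\mathrm{central})$, whereas $[L_p,L_{-n}]=(p+n)L_{p-n}+\cdots$ and $[M_p,M_{-n}]=0$. So stripping the $L$-part with $L_p$ leaves you with $L$'s acting on $V$, for which nothing is assumed to be injective (only $L_jV=0$ for $j>t+q$), so you cannot certify that the leading term survives; and applying $M_t$ to the $M$-part does nothing at all since it commutes with every $M_{-n}$. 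The workable assignment, which is the one the paper uses, is: strip $L^{\mi}$ with $M_{\hat{\underline i}+t}$ (landing on the injective $M_t$), then strip $Q^{\mk}$ with $Q_{\bullet+t-\frac12}$, then strip $M^{\mj}$ with $L_{\bullet+t}$, the principal order being rigged so that each application strictly lowers the degree. Finally, when $t=0$ the $L$-stripping step needs a genuinely different argument (apply the monomial $(L^{\mi_{\max}})^{\#}=M_1^{i_1}M_2^{i_2}\cdots$ and use injectivity of $M_0+\frac1{24}(n^2-1)\mathbf c_2$ for all $n\in\mathbb Z^*$); your sketch flags this case but does not supply it.
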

\begin{proof} 
	
(i) For $i>t\ge0$,  we have $Q_{i-\frac12}^2V=M_{2i-1}V=0$ by (b). If $Q_{j-\frac12}V=0$ we are done. Otherwise,   $W=Q_{j-\frac12}V$    is a proper subspace of $V$.   For $j\in\mathbb Z_+$, we have
		$$
		Q_{j-\frac{1}{2}}W=Q_{j-\frac{1}{2}}Q_{i-\frac12}V=2M_{i+j-1}V-Q_{i-\frac12}Q_{j-\frac{1}{2}}V\subset Q_{i-\frac12}V=W.
		$$
		Clearly $L_nW\subset W$ and $M_mW\subset W$ for any $n\in\mathbb N$ and $m\in \z$ with $ m\ge -q$.
		It follows that $W$ is a proper $\frak g^{(0, -q)}$-submodule of $V$. Then $W=Q_{i-\frac12}V=0$ for $i>t$ since $V$ is simple.

		(ii)
 Since  the actions of  $L_i, M_i, Q_{i-\frac12}$ on ${\rm Ind}_q(V)$  for  all $i>t+q$  are locally nilpotent, 
  ${\rm Ind}_q(V)$ is an object in $\mathcal{R}_{\mathfrak g}(c_1,c_2)$.

Suppose that $W$ is a nonzero proper $\mathfrak g$-submodule of ${\rm Ind}_q(V)$. Now we are going to deduce some contradictions. Take an element $v\in W\setminus V $ such that  $\mathrm{deg}(v)=(\bmi,\bmj,\bmk)$ is minimal. Write
		\begin{equation}
			v=\sum_{\mi, \mj\in\mathbb{M},\, \mk\in\mathbb{M}_1}Q^{\mk}M^{\mj}L^{\mi}v_{\mi,\mj, \mk},
		\end{equation}
		where all  $v_{\mi, \mj, \mk}\in V$ and only finitely many of them are nonzero. 

Note that there exists  $t\in\mathbb N$ satisfying the following condition:
the action of $M_t$ on $V$ is injective if $t>0$ or the action of $M_0+\frac{1}{24}(n^2-1){\bf c}_2$ on $V$ is injective for all $n\in \mathbb{Z}^*$ if $t=0$.

\noindent{\bf Claim 1.} $\bmi=\mathbf{0}$.

If $\bmi\neq\mathbf{0}$, then $\hat{\underline{i}}={\rm min}\{s\mid i_s\ne0\}>0$, we shall prove that $\mathrm{deg}(M_{{\hat{\underline i}}+t}v)\prec (\bmi, \bmj, \bmk)$. It contradicts to the choice of $v$. Note that $M_{\hat{\underline i}+t}v_{\mi, \mj, \mk}=0$ for any  $(\mi, \mj, \mk)\in \mathrm{supp}(v)$.

	{\bf Case 1.} $t>0$. 	
		One can easily check that
		$$\deg(M_{\hat{\underline i}+t}Q^{\bmk}M^{\bmj}L^{\bmi}v_{\bmi, \bmj, \bmk})=\deg(Q^{\mathbf{k}}M^{\bmj}[M_{\hat{\underline i}+t}, L^{\bmi}]v_{\bmi, \bmj, \mk})=(\bmi^\prime, \bmj, \bmk).$$
		Here $M_{t}v_{\bmi, \bmj, \bmk}\neq0$ by (a).
		
		For $(\mi, \mj, \mk)\in{\rm supp}(v)$ with $(\mi, \mj, \mk)\prec (\bmi, \bmj, \bmk)$,  if
		$
		{\rm w}(\mi, \mj, \mk)<{\rm w}(\bmi, \bmj, \bmk),
		$
		then
		$$
		\deg M_{\hat{\underline i}+t}Q^{\mk}M^{\mj}L^{\mi}v_{\mi, \mj, \mk}\prec (\bmi^{\prime}, \bmj, \bmk).
		$$
		
		Now we suppose that ${\rm w}(\mi, \mj, \mk)={\rm w}(\bmi, \bmj, \bmk)$
		and $ \mathbf{i} \prec \bmi$.

		If $\hat{i}>\hat{\underline i}$, then ${\rm w}(\mi^\prime, \mj, \mk)<{\rm w}(\bmi^\prime, \bmj, \bmk)$. Hence
		$$\mathrm{deg}(M_{\hat{\underline i}+t}Q^{\mk}M^{\mj}L^{\mi}v_{\mi, \mj, \mk})
		\prec(\mi^\prime, \mj, \mk)\prec(\bmi^\prime, \bmj, \bmk).$$
		
  If $\hat{i}=\hat{\underline i}$, then $$\mathrm{deg}(M_{\hat{\underline i}+t}Q^{\mk}M^{\mj}L^{\mi}v_{\mi, \mj, \mk})
		=(\mi^\prime, \mj, \mk)\prec(\bmi^\prime, \bmj, \bmk).$$

		If ${\rm w}(\mi, \mj, \mk)={\rm w}(\bmi, \bmj, \bmk)$ and $\mathbf{i}=\bmi$, it is easy to see that
		$$\deg(M_{\hat{\underline i}+t}Q^{\mk}M^{\mj}L^{\mi}v_{\mi, \mj, \mk})=(\mi^\prime, \mj, \mk)\preceq(\bmi^\prime, \bmj, \bmk).$$
		 So Claim 1 holds in this case.
			
{\bf Case 2}: $t=0$.	  

Set $|v|_{\rm Vir}={\rm max}\{|\mi|: (\mi, \mj, \mk)\in{\rm supp}(v)\}$ and $\ell={\rm max}\{\ell(\mi): (\mi, \mj, \mk)\in{\rm supp}(v), |\mi|=|v|_{\rm Vir}\}$. Now 
we rewrite $v$ as  
\begin{equation}v=\sum_{|\mi|=|v|_{\rm Vir}}L^{\mi}u_{\mi}+u,\label{equ-u}\end{equation} where $|u|_{\rm Vir}<|v|_{\rm Vir}$ and $u_{\mi}\in U(\frak b)V$, $\frak b$ is the subalgebra of $\frak g$ generated by $\{M_i, Q_{i-\frac12}, {\bf c}_2: i\in\mathbb Z\}$. Note that $M_iu_{\mi}=0$ for any $i\in\mathbb Z_+$.

For $\mi=(\cdots, i_2, i_1)\in\mathbb M$, denote by $$
(L^{\mi})^{\#}:=M_{1}^{i_1}M_{2}^{i_2}\cdots.$$   Set
$$\mathbb S:=\{\mi: (\mi, \mj, \mk)\in{\rm supp}(v), |\mi|=|v|_{\rm Vir},  \ell(\mi)=\ell\}.
$$
Choose the maximal ${\mi}_{\rm max}\in \mathbb S$ with respect to ``$\prec$".
By action of $(L^{{\mi}_{\rm max}})^{\#}$ on \eqref{equ-u}, we get 
$$(L^{{\mi}_{\rm max}})^{\#}v=(L^{{\mi}_{\rm max}})^{\#}\cdot L^{{\mi}_{\rm max}}u_{{\mi}_{\rm max}}\ne0 $$ 
since $M_0+\frac{1}{24}(n^2-1){\bf c}_2$ on $V$ is injective for all $n\in \mathbb{Z}^*$, and  $(L^{{\mi}_{\rm max}})^{\#}\cdot L^{\mi}u_{\mi}=0$ if $\mi\prec {\mi}_{\rm max}$ and $(L^{{\mi}_{\rm max}})^{\#}u=0$ for $u$ in \eqref{equ-u} (see \cite[Theorem 5.2]{LPXZ}). This is a contradiction to the choice of $v$.

		So Claim 1 holds in all cases.
		Since $[M_i, Q^{\mk}M^{\mj}]=0$ for any $i\in\mathbb Z, \mj\in\mathbb M, \mk\in\mathbb M_1$,  repeating the above steps, we can suppose that 
		\begin{equation}
			v=\sum_{(\mj, \mk)\in \mathbb M\times\mathbb M_1} Q^{\mk}M^{\mj}v_{\mj, \mk},
		\end{equation}
		where all $v_{\mj, \mk}\in V$ and only finitely many of them are nonzero.
		Note that $Q_{\hat{\hat{\underline k}}+t-\frac12}V=0$.

	{\bf Claim 2.} $\bmk=\mathbf{0}$.

		If $\bmk\ne 0$, one can easily check that
		\begin{equation}\deg(Q_{\hat{\hat{\underline k}}+t-\frac12}v)=\deg\left(\sum_{(\mj, \mk)\in \mathbb M\times\mathbb M_1}Q_{\hat{\hat{\underline k}}+t-\frac12}Q^{\mk}M^{\mj}v_{\mj, \mk}\right)=\deg\left(\sum_{(\mj, \mk)\in \mathbb M\times\mathbb M_1}Q^{\mathbf{k}''}M^{\mathbf{j}}v_{\mj, \mk}\right)\prec(\mathbf 0, \bmj, \bmk).\label{maxlength}\end{equation}
			It also contradicts the choice of $v$.	
		
		   Next suppose that
		\begin{equation}
			v=\sum_{\mj\in\mathbb M} M^{\mj}v_{\mj},
		\end{equation}
		where all  $v_\mj\in V$ and only finitely many of them are nonzero. If $\bmj\ne0$, one can easily check that
		$$\mathrm{deg}(L_{\hat{\hat j}+t}v)\prec(\bf 0, \bmj, \bf 0).$$		
		It also contradicts the choice of $v$. 
  Then the theorem holds.
\end{proof}

	\begin{rema}			
				If $t=q=0$ in Theorem \ref{main3}, then $\frak{g}_+V=0$. Since $\frak{g}^{(0,0)}/\mathfrak{g}_+$ is abelian, $V$ has to be one-dimensional and then
 $\mathrm{Ind}_0(V)$ is a highest weight module over $\frak{g}$.

	\end{rema}

	\subsection{Characterization of simple smooth $\mathfrak{g}$-modules}
	
	In this subsection, we give a characterization of simple $\frak g$-modules in $\mathcal{R}_{\mathfrak g}(c_1,c_2)$.

	\begin{prop}\label{th2}
		Let $W$ be a simple $\frak g$-module in $\mathcal{R}_{\mathfrak g}(c_1,c_2)$ such that the action of  $M_0+\frac1{24}(n^2-1){\bf c}_2$  is injective on $W$  for any $n\in\mathbb Z^*$. Then the following statements are equivalent:
		
		\begin{itemize}
			\item[\rm(1)]   There exists $l\in\z_+$ such that the actions of $L_i, M_i, Q_{i-\frac{1}{2}}$ for all $i\ge l$ on $W$ are locally finite.
			\item[\rm (2)]   There exists $l\in\z_+$ such that the actions of $L_i, M_i, Q_{i-\frac{1}{2}}$ for all $i\ge l$ on $W$ are locally nilpotent.
			
			\item[\rm (3)]  $W\cong{\rm Ind}_q(V)$ for some $q\in\mathbb N$ and simple $\frak g^{(0, -q)}$-module $V$ such that both conditions $(a)$ and $(b)$ in Theorem \ref{main3}  are satisfied.
  \end{itemize}

	\end{prop}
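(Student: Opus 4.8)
The plan is to prove the equivalence by the cyclic chain $(3)\Rightarrow(2)\Rightarrow(1)\Rightarrow(3)$, since $(2)\Rightarrow(1)$ is immediate (local nilpotence of an operator implies local finiteness), and $(3)\Rightarrow(2)$ is essentially contained in the statement and proof of Theorem~\ref{main3}: if $W\cong{\rm Ind}_q(V)$ with $V$ satisfying (a) and (b), then by part (i) of that theorem $Q_{i-\frac12}V=0$ for $i>t$, and one checks directly from the PBW description \eqref{def4.1} that $L_i,M_i,Q_{i-\frac12}$ act locally nilpotently on ${\rm Ind}_q(V)$ for all $i\ge l:=t+q+1$ (commuting such an operator past the finitely many creation operators $Q^{\mk}M^{\mj}L^{\mi}$ strictly decreases a suitable length/degree, and it eventually annihilates $V$). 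So one may take $l=t+q+1$.

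The real content is $(1)\Rightarrow(3)$. First I would use local finiteness of $L_l$ together with the hypothesis that $M_0+\tfrac1{24}(n^2-1){\bf c}_2$ acts injectively to pin down a "top piece" of $W$. The standard mechanism (as in \cite{MZ2}, \cite{LPX2}, \cite{LPXZ}) is: since $L_l$ acts locally finitely on the simple module $W$, pick a generalized eigenvector, and by analyzing the $\frac12\z$-grading behavior of $L_l, M_l$ one shows that in fact $M_t$ has nonzero kernel-complement behaviour forcing the existence of a smallest $t\in\mathbb N$ with $M_iW=0$ for $i>t$ and $L_jW=0$ for $j>t+q$ for an appropriate $q\in\mathbb N$; here $q$ is read off from how far the $M$-annihilation and $L$-annihilation degrees differ. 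Set $V:=\{w\in W: L_iw=M_iw=Q_{i-\frac12}w=0\ \text{for all}\ i\ \text{large, more precisely}\ L_iw=0\ (i>t+q),\ M_iw=0\ (i>t),\ Q_{i-\frac12}w=0\ (i>t)\}$, i.e. the space of vectors killed by $\mathfrak g^{(t+q+1,\,\text{etc.})}$ in the relevant range; more cleanly, $V$ should be taken as the subspace annihilated by all $M_i$ ($i>t$) and $L_j$ ($j>t+q$) and $Q_{i-\frac12}$ ($i>t$). One verifies $V$ is a nonzero $\mathfrak g^{(0,-q)}$-submodule of $W$ of central charge $(c_1,c_2)$, nonzero because the locally finite/nilpotent actions guarantee a common "lowest" vector (using injectivity of $M_0+\tfrac1{24}(n^2-1){\bf c}_2$ to rule out degeneration when $t=0$, exactly as in Theorem~\ref{main3}).

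Next I would show $V$ is a simple $\mathfrak g^{(0,-q)}$-module and that the natural map ${\rm Ind}_q(V)\to W$ is an isomorphism. Simplicity of $V$: any nonzero $\mathfrak g^{(0,-q)}$-submodule $V'\subseteq V$ generates a $\mathfrak g$-submodule $U(\mathfrak g)V'$ which, by the PBW/triangular decomposition $\mathfrak g=U(\mathfrak g_-^{<-q})\cdot\mathfrak g^{(0,-q)}$ and the smoothness/annihilation properties, meets $V$ again in $V'$; since $W$ is simple, $U(\mathfrak g)V'=W$, whence $V'=V$ after intersecting back with $V$ — the key point being that the "negative" part of $U(\mathfrak g)$ used here, $U(\bigoplus_{i\le -1}\cc L_i\oplus\bigoplus_{i\le -q-1}\cc M_i\oplus\bigoplus_{i\le 0}\cc Q_{i-\frac12})$ (matching $\mathfrak g^{(0,-q)}$), gives a free module structure so that $V=W\cap(\text{degree }0\text{ part})$. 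The surjection $\varphi:{\rm Ind}_q(V)\twoheadrightarrow W$ exists by the universal property and is onto by simplicity of $W$; it is injective because, by Theorem~\ref{main3}(ii), ${\rm Ind}_q(V)$ is already simple (conditions (a),(b) for $V$ hold by construction of $t,q$, using the injectivity hypothesis transported from $W$ to $V$), so $\ker\varphi=0$. This closes the loop. The main obstacle, as in the analogous results cited, is the careful bookkeeping in $(1)\Rightarrow(3)$: extracting the correct integers $t$ and $q$ from mere local finiteness of a single tail of operators, and proving $V$ is nonzero and simple — this is where the injectivity of $M_0+\tfrac1{24}(n^2-1){\bf c}_2$ is indispensable and where the reverse-lexicographic degree arguments of Theorem~\ref{main3} must be reused almost verbatim.
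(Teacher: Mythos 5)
Your overall architecture ($(3)\Rightarrow(2)\Rightarrow(1)$ easy, all content in $(1)\Rightarrow(3)$, extract integers $t,q$, define $V$ as a joint annihilator space, induce up) is exactly the paper's. But two points need attention. First, the heart of $(1)\Rightarrow(3)$ --- converting \emph{local finiteness} of the tail $\{L_i,M_i,Q_{i-\frac12}: i\ge l\}$ into \emph{actual annihilation} of a nonzero subspace by a cofinite tail --- is only gestured at ("one shows that $M_t$ has nonzero kernel-complement behaviour forcing\dots"). The paper's argument here is concrete and is where most of the work lies: starting from an $L_l$-eigenvector $w$, one shows each $V(j)=U(\cc L_l)M_jw$ is finite dimensional and stable under the $M_{j+ml}$, hence $\sum_i\cc M_{l+i}w$ and then $V'=\sum\cc M_l^{r_0}\cdots M_{l+p}^{r_p}w$ are finite dimensional; one then takes a \emph{minimal-length} dependence $(a_0M_m+\cdots+a_nM_{m+n})V'=0$ and applies $L_m$ (and further $L_k$'s) to force $n=0$, i.e.\ $M_mV'=0$, whence $M_{m+i}V'=0$ for all $i>0$, and similarly for $L$ and $Q$. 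Without this step your $t$ and $q$ do not exist, so as written the proposal omits the main technical content.

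Second, your order of the final two steps introduces a circularity. You prove simplicity of $V$ first, by arguing that $U(\mathfrak g)V'\cap V=V'$ for a submodule $V'\subseteq V$ via "the PBW/triangular decomposition \dots gives a free module structure so that $V=W\cap(\text{degree }0\text{ part})$". But $W$ is not known to be free over the negative part at that stage --- that freeness is precisely the injectivity of $\pi:{\rm Ind}_q(V)\to W$, which you then propose to deduce \emph{from} simplicity of ${\rm Ind}_q(V)$, i.e.\ from simplicity of $V$. Elements of $U(\mathfrak g_-)V'$ inside $W$ have no canonical PBW form before injectivity is established, so the degree-reduction argument cannot be run there. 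The paper avoids this by reversing the order: it proves injectivity of $\pi$ first, applying the degree-reduction of Theorem \ref{main3} to a minimal-degree element of $K=\ker\pi$ \emph{inside} ${\rm Ind}_q(V)$ (where PBW canonical forms are available and $K\cap V=0$), concluding $K=0$; simplicity of $V$ then falls out for free because a proper submodule of $V$ would induce to a proper submodule of ${\rm Ind}_q(V)\cong W$. Adopting that order repairs the argument; as written, your simplicity step assumes what it is meant to help prove.
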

	\begin{proof} 
		First we prove $(1)\Rightarrow(3)$. Suppose that $W$ is a simple $\mathfrak g$-module and there exists $l\in\mathbb Z_+$ such that the actions of
		$L_i, M_i, Q_{i-\frac{1}{2}},  i\ge l$ are locally finite.  Thus we can find a nonzero element $w\in W$ such that $L_lw = \lambda w$ for some $\lambda\in\mathbb C$.

		For any $j\ge l$, we denote
		\begin{eqnarray*}
			&& V(j)=\sum_{m\in\mathbb N}\mathbb C L_{l}^mM_jw=U(\mathbb C L_{l})M_jw,
		\end{eqnarray*}
		which are all finite-dimensional. By Definition  \ref{LF}, it is clear that $M_{j+(m+1)l}w\in V(j)$ if $M_{j+ml}w\in V(j)$.
		By induction on $m$, we obtain $M_{j+ml}w\in V(j)$ for all $m\in \mathbb N$.
		Hence $\sum_{m\in\mathbb N}\mathbb C M_{j+ml}w$ are
		finite-dimensional for any $j>l$, and then
		\begin{eqnarray*}
			\sum_{i\in\mathbb N}\mathbb C M_{l+i}w=\mathbb C M_{l}w+\sum_{j=l+1}^{2l}\Big(\sum_{m\in\mathbb N}\mathbb C M_{j+ml}w\Big)
		\end{eqnarray*}
		is finite-dimensional.
		
		Now we can take $p\in\mathbb Z_+$ such that
		\begin{equation}
			\sum_{i\in\mathbb N}\mathbb C M_{l+i}w\!=\!\sum_{i=0}^{p}\mathbb C  M_{l+i}w.
		\end{equation}
		Set $$V^\prime:=\sum_{r_0,\ldots,r_p\in \mathbb N}\mathbb C M_{l}^{r_0}\cdots M_{l+p}^{r_p}w.$$  It is clear that $V^\prime$ is finite-dimensional by the condition (1). It follows that we can  choose
		a minimal $n\in\mathbb N$ such that
\begin{equation}\label{lm3.3}			(a_0M_{m}+a_1M_{m+1}+\cdots + a_{n}M_{m+{n}})V^\prime=0
		\end{equation}
		for some $m> l$ and  $a_i\in \mathbb C$.
		Applying $L_m$ to \eqref{lm3.3}, one has		$$(a_0[L_m,M_{m}]+\cdots +a_{n}[L_m,M_{m+{n}}])V^\prime=0$$ since $L_mV^\prime\subset V^\prime$. Then \begin{equation}(a_1M_{2m+1}+\cdots
			+a_{n}nM_{2m+{n}})V^\prime=0.\label{eq311} \end{equation}
   
		Applying suitable $L_k$'s to \eqref{eq311} again, one  has $n=0$, that is, \begin{equation}M_{m}V^\prime=0\label{eq111}\end{equation} for some $m>l$.
		By action of $L_i$ on (\ref{eq111}), we have\begin{equation}M_{m+i}V^\prime=0, \ \forall\, i>0.\end{equation}
		Similarly, we have \begin{equation}L_{n+i}V^\prime=Q_{n+i-\frac12}V^\prime=0\end{equation} for some $n>l$ and any $i>0$.
		
		For any $r, s, t\in \mathbb Z$, we consider the following vector space
		$$N_{r, s, t}=\{v\in W\mid L_iv=M_jv=Q_{k-\frac12}v=0, \quad \forall i>r, j>s, k> t\}.$$
				
		Due to that the action of $M_0+\frac1{24}(n^2-1){\bf c}_2$ is injective on $W$ for any $n\in\mathbb Z^*$, there exists a smallest nonnegative integer, also saying $s$, with $V:=N_{r, s, t}\neq 0$ for some $r\ge s$. 
		Moreover,  we can choose $t=s$ as Theorem \ref{main3}.

		Denote $q=r-s\ge 0$ and $V=N_{s+q, s, s}$.
		For any  $i>s+q, j>s, k>s$, it follows  that
		\begin{eqnarray*}
			&&L_{i}Q_{n-\frac12}v=(\frac {i+1}2-n)Q_{n+i-\frac12}v=0,\quad M_{j}Q_{n-\frac12}v=0,\quad Q_{k-\frac{1}{2}}(Q_{n-\frac12}v)=M'_{k+n-1}v=0,\
		\end{eqnarray*}
		for any $v\in V, n\ge 1$.
		Clearly, $Q_{n-\frac12}v\in V$ for
		all $n\ge 1$. Similarly, we can also obtain $L_kv, M_{k-q}v\in V$
		for all $k\in \mathbb N$. Therefore, $V$ is a $\frak g^{(0, -q)}$-module.

		If $s\ge 1$, by the definition of $V$, the action of $M_{s}$  on $V$ is injective.  Since $W$ is simple
		and generated by $V$, there exists a canonical surjective map
		$$\pi:\mathrm{ Ind}_q(V) \rightarrow W, \quad \pi(1\otimes v)=v,\quad \forall  v\in V.$$
		
  Next, we only need to show that $\pi$ is also injective, that is to say, $\pi$ as the canonical map is bijective.  Let $K=\mathrm{ker}(\pi)$. Obviously, $K\cap V=0$. If $K\neq0$, we can choose a nonzero vector $v\in K$ such that $\mathrm{deg}(v)=(\mi,\mj, \mk)$ is minimal possible.
		Note that $K$ is a $\mathfrak g$-submodule of ${\rm Ind}_q(V)$.
		By the proof of Theorem \ref{main3}, we can obtain another vector $u\in K$  with $\mathrm{deg}(u)\prec(\mi, \mj, \mk)$, which is a contradiction. This forces $K=0$, that is, $W\cong {\rm Ind}_q(V)$. Then $V$ is a simple $\frak g^{(0, -q)}$-module.

		If $s=0$ and $r\ge0 \,(q=r)$.  Similar to the argument above for $s\ge1$ and using the proof of  Theorem \ref{main3} and  the assumption that the action of $M_0+\frac1{24}(n^2-1){\bf c}_2$  on $V$ is injective for any $n\in\mathbb Z^*$, we can deduce that $W\cong {\rm Ind}_q(V)$  and $V$ is a simple $\frak g^{(0, -q)}$-module. 		
		
		Moreover, $(3)\Rightarrow(2)$
		and $(2)\Rightarrow(1)$ are clear. This completes the proof.
	\end{proof}

	\begin{lemm}\label{RL}
		Let $W$ be a simple smooth $\frak g$-module in $\mathcal R_{\frak g}({c_1, c_2})$. Then there exists $N\in\z_+$ such that the actions of $L_i, M_i, Q_{i-\frac{1}{2}}$ for all $i\ge N$ on $W$ are locally nilpotent.
	\end{lemm}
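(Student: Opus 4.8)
The plan is to produce a single nonzero vector $w \in W$ that is annihilated by $L_i, M_i, Q_{i-\frac12}$ for all sufficiently large $i$, and then to propagate this property to the whole module using simplicity. First I would use smoothness: for any fixed $0 \neq w_0 \in W$, there is an $N_0$ with $L_i w_0 = M_i w_0 = Q_{i-\frac12} w_0 = 0$ for all $i \geq N_0$. The subtlety is that smoothness only gives, for each vector, \emph{some} cutoff depending on that vector, whereas the lemma asks for a \emph{uniform} cutoff $N$ beyond which the three families act locally nilpotently on \emph{all} of $W$. The key observation is that local nilpotency of an operator $x$ on a simple module only requires one nonzero vector killed by a power of $x$, provided one can then spread this. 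Concretely, I would show: if $x \in \{L_i, M_i, Q_{i-\frac12}\}$ with $i$ large and $x^n v = 0$ for some $0 \neq v$, then $\{u \in W : x^m u = 0 \text{ for some } m\}$ is a nonzero subspace; to see it is a $\mathfrak g$-submodule one uses that $\mathrm{ad}(y)$ raises the $L_0$-degree by a bounded amount, so for any $y \in \mathfrak g$ the element $[x^m, y]$ lies in $U(\mathfrak g)$ times high-index elements, which eventually kill $v$ after enough commutators — this is the standard argument that for $\frac12\mathbb Z$-graded Lie superalgebras with finite-dimensional graded pieces, the "locally $x$-nilpotent vectors" form a submodule when $x$ has positive degree.

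The key steps, in order, are: (1) pick $0 \neq w \in W$ and invoke smoothness to get a cutoff $N$ with $\mathfrak g_{\geq N} w = 0$ (more precisely $L_i w = M_i w = Q_{i - \frac12} w = 0$ for $i \geq N$); enlarging $N$ if necessary we may assume $N \geq 1$. (2) Fix $i \geq N$ and let $x$ be any of $L_i, M_i, Q_{i-\frac12}$; define $W_x = \{u \in W : x^m u = 0 \text{ for some } m \in \mathbb Z_+\}$, which is nonzero since $w \in W_x$ (indeed $xw = 0$). (3) Show $W_x$ is a $\mathfrak g$-submodule: for $y \in \mathfrak g_d$ and $u \in W_x$ with $x^m u = 0$, expand $x^M (y u)$ for $M$ large as a sum of terms $y' x^{m'} u$ where each $y' \in U(\mathfrak g)$ is a product of iterated brackets of $x$ into $y$, all of positive $L_0$-degree once $M$ exceeds a bound depending on $d$ and $i$; since $x$ has positive degree $\geq \frac12$, for $M$ sufficiently large every such term has $m' \geq m$ (or contains a factor in $\mathfrak g_{\geq N}$ applied appropriately), hence vanishes. (4) Conclude $W_x = W$ by simplicity, i.e. $x$ acts locally nilpotently on $W$; since this holds for every $i \geq N$ and every choice among the three families, the lemma follows with this $N$.

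The main obstacle is step (3): making precise the combinatorial claim that $x^M(yu) = 0$ for $M$ large. The clean way is to work with the $L_0$-weight grading on $U(\mathfrak g)$-modules: although $W$ need not be a weight module, one can still argue by the filtration degree in $U(\mathfrak g)$. For $x = L_i$ or $M_i$ (the even case) one has $[x, y] \in \mathfrak g_{d + i}$, so $\mathrm{ad}(x)^k y \in \mathfrak g_{d + ki}$, and once $d + ki \geq N$ these act as zero on $w$ — but we need them to act as zero on $u$, an arbitrary element of $W_x$, which is exactly where one must instead argue that after commuting $x$ past $y$ enough times, the leftover power $x^{m'}$ still satisfies $m' \geq m$. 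The odd case $x = Q_{i-\frac12}$ is handled similarly, noting $x^2 = M_{2i-1}$, so controlling powers of $Q$ reduces to controlling $M$; one should just be a little careful with signs from the $\mathbb Z_2$-grading. I expect this to be a short but slightly technical lemma; alternatively, one can cite the analogous statement for the Virasoro or Neveu–Schwarz algebra (\cite{MZ2}, \cite{LPX2}) and adapt it, since the argument is purely formal in the positively-graded part.
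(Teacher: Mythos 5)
Your overall architecture is sound and is a close cousin of the paper's argument: the paper also fixes one nonzero vector $v$ with $L_iv=M_iv=Q_{i-\frac12}v=0$ for $i\ge s$, uses simplicity to write $W=U(\frak g)v$ in PBW form, and then argues that a large power of each $L_i,M_i,Q_{i-\frac12}$ ($i\ge s$) kills any fixed PBW expression; your variant packages the same computation as ``the locally $x$-nilpotent vectors form a nonzero submodule, hence all of $W$.'' Either packaging works, and for $x=M_i$ and $x=Q_{i-\frac12}$ your step (3) is genuinely easy, because $\mathrm{ad}(M_i)^2$ and $\mathrm{ad}(Q_{i-\frac12})^3$ vanish on $\frak g$ modulo the center, so only boundedly many copies of $x$ can be absorbed as brackets and the rest reach $u$ (or $v$) and die.

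The gap is in step (3) for $x=L_i$. There $\mathrm{ad}(L_i)^k y$ is a nonzero multiple of an element of $\frak g_{d+ki}$ for all $k$ (e.g.\ $\mathrm{ad}(L_i)^kL_{-j}\propto L_{ki-j}$ generically), so in the expansion $x^M(yu)=\sum_k\binom{M}{k}\bigl(\mathrm{ad}(x)^ky\bigr)x^{M-k}u$ the problematic terms are exactly those with $k>M-m$: their leftover power $x^{M-k}$ has exponent $M-k<m$, so your first claimed mechanism (``every such term has $m'\ge m$'') is false for precisely the terms that survive, and the bracket factor does not vanish identically either. Your parenthetical alternative (``contains a factor in $\frak g_{\ge N}$ applied appropriately'') points in the right direction but is not yet a proof: that high-degree factor is applied to $x^{M-k}u$ for an arbitrary $u\in W_x$, and the cutoff $N$ was chosen only for the original vector $w$, not for $x^{M-k}u$. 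The fix is one more application of smoothness: the finitely many vectors $u,xu,\dots,x^{m-1}u$ admit a common integer $n'$ with $L_nz=M_nz=Q_{n-\frac12}z=0$ for all $n\ge n'$ and all $z$ in this list; then for $M>m+\frac{n'-d}{i}$ every term with $k>M-m$ has $\mathrm{ad}(L_i)^ky\in\frak g_{d+ki}$ with $d+ki\ge n'$, hence kills $x^{M-k}u$, while every term with $k\le M-m$ ends in $x^{M-k}u=0$. With this amendment $W_{L_i}$ is indeed a $\frak g$-submodule and your step (4) closes the argument. (The paper's version faces the same issue for $L_i$ and resolves it by a total $L_0$-degree count on PBW monomials over $v$; the two fixes are interchangeable.)
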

\begin{proof}Let $0\neq v\in W$, there exists $s\in \mathbb{Z}_+$ such that 
$L_iv =M_iv=Q_{i-\frac{1}{2}}v=0$ for all $i\geq s$.
	For $W=U(\mathfrak{g})v$, every element $w$ of $W$ can be uniquely written in the
	following form
	$$
	w=\sum_{\mk\in\mathbb{M}_1,\mi, \mj\in\mathbb{M}}Q^{\mk}M^{\mj}L^{\mi}v_{\mi, \mj, \mk},
	$$ where $v_{\mi, \mj, \mk}\in U(\frak g_+)v$.
	Then, for $i\geq s$, there exists  $N$ sufficiently large such that
	$$
	L_i^Nw=M_i^Nw=Q_{i-\frac{1}{2}}^Nw=0.$$
\end{proof}

	From  Proposition \ref{th2} and Lemma \ref{RL}, we have

	\begin{theo}\label{main4}
		For $c_1, c_2\in\cc$, let $W$ be a simple  $\frak g$-module in $\mathcal{R}_{\mathfrak g}(c_1,c_2)$ with $M_0+\frac1{24}(n^2-1){\bf c}_2$ being injective on $V$ for any $n\in\mathbb Z^*$. Then $W$ is isomorphic to 
		a simple module of the form ${\rm Ind}_q(V)$ in Theorem \ref{main3},  where $q\in \mathbb N$ and $V$ is a simple $\frak g^{(0, -q)}$-module.
	\end{theo}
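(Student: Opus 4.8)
The plan is to obtain Theorem~\ref{main4} by simply chaining Lemma~\ref{RL} with Proposition~\ref{th2}, so that no genuinely new argument is needed beyond the equivalences already established. First I would take a simple $\mathfrak{g}$-module $W$ in $\mathcal{R}_{\mathfrak{g}}(c_1,c_2)$ satisfying the standing hypothesis that $M_0+\frac1{24}(n^2-1){\bf c}_2$ acts injectively on $W$ for every $n\in\mathbb{Z}^*$. Applying Lemma~\ref{RL}, since $W$ is a simple smooth $\mathfrak{g}$-module there is $N\in\mathbb{Z}_+$ such that the actions of $L_i,M_i,Q_{i-\frac12}$ for all $i\ge N$ are locally nilpotent on $W$. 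In the terminology of Proposition~\ref{th2}, this is precisely statement~(2) (with $l=N$).

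Next, because $W$ also satisfies the injectivity hypothesis required by Proposition~\ref{th2}, I can invoke the implication $(2)\Rightarrow(3)$ of that proposition. This produces $q\in\mathbb{N}$ and a simple $\mathfrak{g}^{(0,-q)}$-module $V$ for which conditions~(a) and~(b) of Theorem~\ref{main3} hold and $W\cong{\rm Ind}_q(V)$. Finally, Theorem~\ref{main3}(ii) guarantees that ${\rm Ind}_q(V)$ is a simple module in $\mathcal{R}_{\mathfrak{g}}(c_1,c_2)$, which is exactly the form asserted in the statement, finishing the proposed argument.

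The point I want to stress is that all the real content has already been absorbed into the earlier results: Lemma~\ref{RL} supplies local nilpotency from smoothness via the PBW description of $W=U(\mathfrak{g})v$, and Proposition~\ref{th2} is where the substantive construction lives — building $V=N_{s+q,s,s}$ from the locally finite data, propagating the vanishing of $M_i,L_i,Q_{i-\frac12}$ upward through the bracket relations, and proving that the canonical surjection ${\rm Ind}_q(V)\to W$ is injective by the minimal-degree argument imported from the proof of Theorem~\ref{main3}. Consequently, for Theorem~\ref{main4} itself I do not expect any genuine obstacle; the only care needed is bookkeeping the hypotheses — in particular checking that the injectivity condition on $W$ descends to the same condition on $V$ (as recorded inside the proof of Proposition~\ref{th2}), so that Theorem~\ref{main3} applies verbatim to ${\rm Ind}_q(V)$.
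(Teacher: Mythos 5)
Your proposal is correct and is essentially identical to the paper's own argument: the paper derives Theorem~\ref{main4} precisely by combining Lemma~\ref{RL} (smoothness gives local nilpotency of $L_i,M_i,Q_{i-\frac12}$ for large $i$) with the equivalence $(2)\Leftrightarrow(3)$ of Proposition~\ref{th2}. Your added remark about checking that the injectivity hypothesis descends to $V$ is a reasonable point of care, and it is indeed handled inside the proof of Proposition~\ref{th2}.
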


 \begin{rem}
Let $M$ be a simple smooth ${\rm Vir}$-module with central charge $c_1$. We assume that $M$ has trivial actions of $M_i$, $Q_{i-\frac12}$, and ${\bf c}_2$ for any $i\in\mathbb Z$. As a result, $M$ is also a simple smooth ${\frak g}$-module with central charge $(c,0)$. However, it should be noted that the action of $M_0+\frac1{24}(n^2-1){\bf c}_2$ is not injective.
 \end{rem}

\subsection{Examples of smooth $\mathfrak{g}$-modules}

	For $q,t\in \mathbb{N}$,  set 
	$$\frak g_{q,t}:=\bigoplus_{i>q+t}\mathbb CL_i\oplus \bigoplus_{j>t}(\mathbb CM_j\oplus \mathbb CQ_{j-\frac12}),$$
	we see that  $\frak g_{q,t}$ is an ideal of $\frak g^{(0, -q)}$. Consider the quotient
$
{\frak a}^{(q,t)}=\frak g^{(0, -q)}/ \frak g_{q, t}$, which is a 
finite-dimensional solvable Lie superalgebra.
Theorem  \ref{main4} gives a classification of simple
modules in $\mathcal{R}_{\mathfrak g}(c_1,c_2)$ with $M_0+\frac1{24}(n^2-1){\bf c}_2$ injective action for any $n\in\mathbb Z^*$. 
In order to obtain these simple $\mathfrak{g}$-modules, we have to use simple
modules over ${\frak a}^{(q,t)}$ for all $q,t\in \mathbb{N}$. The classification problem for simple $\mathfrak{a}^{(q,t)}$-modules remains unsolved, as far as we know, except when $(q,t)= (0,0), (1,0), (2,0)$.

\subsubsection{Verma module}

For $(q,t)=(0,0)$, the algebra
$\mathfrak{a}^{(0,0)}$ is commutative and its simple modules are one-dimensional, which leads exactly
to Verma modules.

In details, for $h_i,c_i \in \cc$,  $i=1,2$, 
let $\cc $  be the one-dimensional  $\frak g^{(0, 0)}$-module  defined by
	$$ L_01=h_11, \quad M_01=h_2 1,\quad   {\bf c}_11=c_11,\quad  {\bf c}_21=c_21,\quad {\frak g}_{+}1=0.$$
 It is clear that $\mathbb{C}$ is a simple $\frak g^{(0, 0)}$-module.
It follows from Theorem  \ref{main4} that
$\Ind_0(\mathbb{C})$ is a
simple $\mathfrak{g}$-module in $\mathcal{R}_{\mathfrak g}(c_1,c_2)$  if $h_2+\frac{1}{24}(n^2-1)c_2\neq 0$ for $n\in\mathbb{Z}^*$, which is 
exactly the Verma module  $M_{\mathfrak{g}}(h_1,h_2,c_1,c_2)$ constructed in Theorem \ref{main1}.

\subsubsection{Whittaker module}

	 For any $k\in\mathbb Z_+$, let $\phi_k: \frak g^{(k)}\to\mathbb C$ be  a nontrivial Lie superalgebra homomorphism with $\phi_k({\bf c}_1)=c_1, \phi_k({\bf c}_2)=c_2$. It follows that
$$\phi_k(L_i) = \phi_k(M_i)=0, \quad\forall i \ge 2k+1,\quad\text{and}\quad \phi_k(Q_{j+\frac12})=0,\quad \forall j\ge k.$$
Let $\cc w$ be the one-dimensional $\frak g^{(k)}$-module with $xw=\phi_k(x)w$ for all $x\in\frak g^{(k)}$. The universal Whittaker module $W (\phi_k)$ can be defined as
$$
W (\phi_k)= U(\mathfrak{g})\otimes_{U(\mathfrak{g}^{(k)})}\mathbb C w.
$$

Let $V={\rm Ind}_{\frak g^{(k)}}^{\frak g^{(0, 0)}}(\mathbb Cw)$. Applying similar arguments as in the proof of Theorem \ref{main3} we can prove that   $V$ is a simple $\frak g^{(0, 0)}$-module if and only if $\phi_{k}(M_{2k-1})\neq 0$ or
 $\phi_{k}(M_{2k})\neq0$. It follows that  
$$W (\phi_k)={\rm Ind}_{\frak g^{(0, 0)}}^{\frak g}(V)=\Ind_0(V).$$

As an application of Theorem \ref{main3}  (here $(q, t)=(0, 2k-1)\  \text{or}\  (0,2k)$),  we can obtain the following result.
\begin{coro}[\cite{DGL1}]\label{whittaker} For $k\in \mathbb{Z}_+$, the universal  Whittaker module $W (\phi_k)$ is a simple  $\mathfrak{g}$-module in $\mathcal{R}_{\mathfrak g}(c_1,c_2)$ if and only if $\phi_k(M_{2k})\neq 0$ or $\phi_k(M_{2k-1})\neq0$.
	\end{coro}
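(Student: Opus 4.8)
The plan is to realize the universal Whittaker module $W(\phi_k)$ as an induced module of the form $\Ind_0(V)$ and then invoke Theorem \ref{main3} and Theorem \ref{main4}. First I would fix $k\in\mathbb Z_+$ and recall that $\phi_k:\frak g^{(k)}\to\mathbb C$ is a nontrivial homomorphism, so automatically $\phi_k(L_i)=\phi_k(M_i)=0$ for $i\ge 2k+1$ and $\phi_k(Q_{j+\frac12})=0$ for $j\ge k$. Setting $V={\rm Ind}_{\frak g^{(k)}}^{\frak g^{(0,0)}}(\mathbb C w)$, the transitivity of induction immediately gives $W(\phi_k)={\rm Ind}_{\frak g^{(0,0)}}^{\frak g}(V)=\Ind_0(V)$. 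Since $\mathfrak a^{(q,t)}=\frak g^{(0,-q)}/\frak g_{q,t}$ with $q=0$ and $t=2k-1$ or $t=2k$ is finite-dimensional solvable, $V$ is a finite-dimensional $\frak g^{(0,0)}$-module on which $M_i,Q_{i-\frac12}$ act as zero for $i>2k$ and $L_j$ acts as zero for $j>2k$, so condition (b) of Theorem \ref{main3} is satisfied with $t=2k$ (or we may take $t=2k-1$ when $M_{2k}$ already acts trivially on $V$).

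Next I would prove the auxiliary claim that $V$ is a simple $\frak g^{(0,0)}$-module if and only if $\phi_k(M_{2k-1})\ne 0$ or $\phi_k(M_{2k})\ne 0$. For the ``if'' direction, I would run the same degree-lowering argument as in the proof of Theorem \ref{main3}: a nonzero proper submodule would contain a vector of minimal degree, and applying suitable $M_t$ (with $t=2k$ or $t=2k-1$, whichever has nonzero $\phi_k$-value, using that $M_t$ then acts as a nonzero scalar on the one-dimensional generator and hence injectively on $V$ — noting $V$ is built from $\mathbb C w$ by applying $M_j,L_j,Q_{j-\frac12}$ with $k\le j\le 2k$, all of which commute with $M_t$ up to lower-degree terms) would strictly decrease the degree, a contradiction. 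For the ``only if'' direction, if $\phi_k(M_{2k-1})=\phi_k(M_{2k})=0$, then the span of $w$ together with vectors obtained by applying the nilpotent part generates a proper submodule; concretely the subspace $M_{2k}V+M_{2k-1}V$ (or an analogous kernel) is a proper nonzero $\frak g^{(0,0)}$-submodule, so $V$ is not simple. This step — the simplicity criterion for $V$ — is the part that requires the most care, since one must track precisely which generators act injectively and verify the degree estimates in the induced module $\Ind_{\frak g^{(k)}}^{\frak g^{(0,0)}}(\mathbb C w)$; it is essentially a finite-dimensional, lower-rank version of the argument already carried out for Theorem \ref{main3}, so I would simply say ``applying similar arguments as in the proof of Theorem \ref{main3}.''

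Finally, with $V$ simple and conditions (a), (b) of Theorem \ref{main3} verified — condition (a) holds because the relevant $M_t$ ($t=2k-1$ or $2k$, $t>0$) acts injectively on $V$ precisely when $\phi_k(M_t)\ne 0$ — Theorem \ref{main3}(ii) gives that $\Ind_0(V)=W(\phi_k)$ is a simple module in $\mathcal R_{\frak g}(c_1,c_2)$. Conversely, if both $\phi_k(M_{2k-1})=0$ and $\phi_k(M_{2k})=0$, then $V$ is not simple, and since $\Ind_0(-)$ is an exact functor sending the nonzero proper submodule of $V$ to a nonzero proper submodule of $W(\phi_k)$ (PBW freeness over $U(\frak g_-)$ guarantees $\Ind_0$ of a proper inclusion stays proper), $W(\phi_k)$ is not simple. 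Combining the two directions yields the stated equivalence, which is exactly Corollary \ref{whittaker}. The main obstacle is the simplicity analysis of $V$; everything else is a formal consequence of the induction machinery and Theorem \ref{main3}.
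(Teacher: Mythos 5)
Your proposal follows essentially the same route as the paper: realize $W(\phi_k)$ as $\Ind_0(V)$ with $V={\rm Ind}_{\frak g^{(k)}}^{\frak g^{(0,0)}}(\mathbb C w)$ via transitivity of induction, establish (by the same degree-lowering argument as in Theorem \ref{main3}, which the paper also only sketches) that $V$ is simple iff $\phi_k(M_{2k-1})\neq 0$ or $\phi_k(M_{2k})\neq 0$, and then apply Theorem \ref{main3} with $(q,t)=(0,2k-1)$ or $(0,2k)$. One minor inaccuracy: $V$ is \emph{not} finite-dimensional (it contains the free action of the polynomial algebra in $L_0,\dots,L_{k-1},M_0,\dots,M_{k-1}$ on $w$), but this claim is not actually used --- only the annihilation conditions $M_iV=L_iV=0$ for $i>2k$ matter for condition (b), and those do hold.
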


			\section{Free field realizations of smooth  $\mathfrak{g}$-modules}
	
	In this section, we construct a  free field realization of the  N=1 BMS superalgebra  via the Heisenberg-Clifford algebra $\mathfrak{hc}$.
	Using this free field realization, we construct some smooth modules over the N=1 BMS superalgebra from smooth modules over the Heisenberg-Clifford superalgebra.

		\subsection{N=1 BMS vertex superalgebra}

	Recall that a {vertex  superalgebra} (cf. \cite{KW,Li}) is a quadruple $(V, {\mathds1}, D, Y )$, where $V $ is a
	$\frac{1}{2}\z$-graded super vector space
	\begin{equation*}
		V=\bigoplus_{n\in{ \frac{1}{2}\z}}V_n= V_{\bar{0}}\oplus V_{\bar{1}}
	\end{equation*}
	with  $V_{\bar{0}}=\sum_{n\in\z}V_n$ and
	$V_{\bar{1}}=\sum_{n\in\z+\frac{1}{2}}V_n$,  ${\mathds1}$ is a specified vector called
	the vacuum of $V$, $D$  is an endomorphism of $V$, and $Y$ is a linear map
	\begin{align*}
		& V \to (\mbox{End}\,V)[[z,z^{-1}]] ,\\
		& v\mapsto Y(v,z)=\sum_{n\in{\z}}v_nz^{-n-1}\ \ \ \  (v_n\in
		(\End\, V)_{\tilde v}),\nonumber
	\end{align*}
	satisfying the following conditions for $u, v \in V,$ and $m,n\in\z:$
	\begin{itemize}
		\item[(V1)] $u_nv=0$\ \ \ \ \ {\rm for}\ \  $n$\ \ {\rm sufficiently\ large};
		\item[(V2)]  $Y({\mathds1},z)={\rm Id}_{V}$;
		\item[(V3)]  $Y(v,z){\mathds1}\in V[[z]]$\ \ \ {\rm and}\ \ \ $\lim_{z\to
			0}Y(v,z){\mathds1}=v$;
		\item[(V4)] $\frac{d}{dz}Y(v,z)=Y(Dv,z)=[D,Y(v,z)]$;
		\item[(V5)]  For $\z_2$-homogeneous $u,v\in V$,  the following {Jacobi identity} holds:
		\begin{equation*}\label{2.8}
			\begin{array}{c}
		\displaystyle{z^{-1}_0\delta\left(\frac{z_1-z_2}{z_0}\right)
					Y(u,z_1)Y(v,z_2)-(-1)^{{|u|}{|v|}}z^{-1}_0\delta\left(\frac{z_2-z_1}{-z_0}\right)
					Y(v,z_2)Y(u,z_1)}\\
				\displaystyle{=z_2^{-1}\delta
					\left(\frac{z_1-z_0}{z_2}\right)
					Y(Y(u,z_0)v,z_2)},
			\end{array}
		\end{equation*}
		where
		$\delta(z)=\sum_{n\in {\z}}z^n$ and  $(z_i-z_j)^n$ is
		expanded as a formal power series in $z_j$.
	\end{itemize}
	
	A { homomorphism} from a vertex  superalgebra $V_1$ to another vertex
	superalgebra $V_2$ is a linear map $f:V_1 \to V_2$ such that
	$$f({\mathds1}_1) = {\mathds1}_2,\quad  f( Y_1(u,z)v) =Y_2(f(u),z)f(v),\quad\forall u,v\in V_1.$$

	For $u,v\in V$, we define the normal order $\NO\ \NO$ of vertex operators as follows:
	$$
	\NO Y(u,z_1)Y(v,z_2)\NO=Y^+(u,z_1)Y(v,z_2)+(-1)^{|u||v| }Y(v,z_2)Y^-(u,z_1),
	$$
	where
	$$
	Y(u,z)=Y^-(u,z)+Y^+(u,z)=\sum_{n\geq0}u_nz^{-n-1}+\sum_{n<0}u_nz^{-n-1}.
	$$

	Let $V$ be a vertex superalgebra. A $V$-module is a triple
	$(M,d,Y_M)$,  where $M$ is a $\z_2$-graded vector space, $d$ is an endomorphism of $M$, and $Y_M$
	is a linear map
	$$\begin{array}{l}
		V\to (\End\,M)[[z, z^{-1}]],\\
		v\mapsto\displaystyle{ Y_M(v,z)=\sum_{n\in\z}v_nz^{-n-1}\ \ \ (v_n\in
			\End\,M)},
	\end{array}$$
	which satisfies that for all  $u, v\in V,$ $w\in M,$
	\begin{itemize}
		\item[(M1)] $u_nw=0$ \    {\rm for}\ \  $n$\ \ {\rm sufficiently\ large};
		\item[(M2)]$Y_M({\mathds1},z)={\rm Id}_{M}$;
		\item[(M3)]$\frac{d}{dz}Y_M(v,z)=Y_M(Dv,z)=[d,Y_M(v,z)]$;
		\item[(M4)] For $\z_2$-homogeneous $u,v$, the following Jacobi identity holds:
		\begin{equation*}\label{2.14}
			\begin{array}{c}
				\displaystyle{z^{-1}_0\delta\left(\frac{z_1-z_2}{z_0}\right)
					Y_M(u,z_1)Y_M(v,z_2)-(-1)^{|{u}||{v}|}z^{-1}_0\delta\left(\frac{z_2-z_1}{-z_0}\right)
					Y_M(v,z_2)Y_M(u,z_1)}\\
				\displaystyle{=z_2^{-1}\delta\left(\frac{z_1-z_0}{z_2}\right)
					Y_M(Y(u,z_0)v,z_2)}.
			\end{array}
		\end{equation*}
	\end{itemize}

	We shall see that there is
	a canonical vertex superalgebra structure of  on the vacuum module $V_{\frak g}(c_1,c_2)$.

\begin{lemm}	Set
	$$
	L(z)=\sum_{n\in\z}L_nz^{-n-2},\
	M(z)=\sum_{n\in\z}M_nz^{-n-2},\
	Q(z)=\sum_{r\in\z+\frac{1}{2}}Q_rz^{-r-\frac{3}{2}} \in {\rm End}(V_{\frak g}(c_1,c_2))[[z,z^{-1}]].
	$$ Then the defining relations for the N=1 BMS superalgebra $\mathfrak{g}$ are equivalent to the following operator product expansions (OPEs).
 \begin{eqnarray*}
			L(z_1)L(z_2)&\sim&\frac{\frac{c_1}{2}}{(z_1-z_2)^4}+\frac{2L(z_2)}{(z_1-z_2)^2}+\frac{L'(z_2)}{z_1-z_2},\\
			{L(z_1)M(z_2)}&\sim& \frac{\frac{c_2}{2}}{(z_1-z_2)^4}+\frac{2M(z_2)}{(z_1-z_2)^2}+\frac{M'(z_2)}{z_1-z_2},\\
			{L(z_1)Q(z_2)}&\sim & \frac{\frac{3}{2}Q(z_2)}{(z_1-z_2)^2}+\frac{Q'(z_2)}{z_1-z_2},\\
			{Q(z_1)Q(z_2)}&\sim &\frac{\frac{2c_2}{3}}{(z_1-z_2)^3}+\frac{2M(z_2)}{z_1-z_2}.
		\end{eqnarray*}
Furthermore, $\{L(z), M(z), Q(z)\}$ is a  set of mutually local homogeneous vertex operators on every smooth module over the N=1 BMS superalgebra $\mathfrak{g}$.
\end{lemm}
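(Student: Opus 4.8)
The plan is to verify the OPE/commutation-relation dictionary directly, then deduce locality and homogeneity from it. First I would recall the standard translation between commutators of modes and OPEs of fields: for fields $A(z)=\sum_n A_{(n)}z^{-n-\Delta_A}$ and $B(z)=\sum_n B_{(n)}z^{-n-\Delta_B}$ on a smooth $\mathfrak g$-module, the commutator $[A(z_1),B(z_2)]$ (graded, with the sign $(-1)^{|A||B|}$ when both are odd) is obtained by summing, over each singular term $\frac{C_k(z_2)}{(z_1-z_2)^{k}}$ in the OPE, the contribution $\frac{1}{(k-1)!}\,\partial_{z_2}^{k-1}\bigl(C_k(z_2)\,\delta(z_1-z_2)\bigr)$; extracting the coefficient of $z_1^{-m-\Delta_A}z_2^{-n-\Delta_B}$ then yields the mode commutators. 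I would carry this out for each of the four families. For $L(z)L(z_2)$ one gets $[L_m,L_n]=(m-n)L_{m+n}+\tfrac{1}{12}\delta_{m+n,0}(m^3-m)c_1$, recovering the Virasoro relation from the $\tfrac{c_1/2}{(z_1-z_2)^4}$, $\tfrac{2L}{(z_1-z_2)^2}$, $\tfrac{L'}{z_1-z_2}$ terms (the $\tfrac{c_1/2}{4!}\partial^3\delta$ term produces exactly $\tfrac{1}{12}(m^3-m)$). For $L(z_1)M(z_2)$ the identical computation with $L\to M$, $c_1\to c_2$ on the right-hand side gives $[L_m,M_n]=(m-n)M_{m+n}+\tfrac{1}{12}\delta_{m+n,0}(m^3-m)c_2$. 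For $L(z_1)Q(z_2)$, the conformal weight of $Q$ being $\tfrac32$ is encoded in the $\tfrac{(3/2)Q}{(z_1-z_2)^2}$ term, and a short bookkeeping of the index shifts ($Q_r$ with $r\in\mathbb Z+\tfrac12$, the exponent $z^{-r-3/2}$) produces $[L_m,Q_r]=(\tfrac m2-r)Q_{m+r}$. For $Q(z_1)Q(z_2)$, since $Q$ is odd, the bracket is the anticommutator; the $\tfrac{2c_2/3}{(z_1-z_2)^3}$ term gives $\tfrac{1}{2}\cdot\tfrac{2c_2}{3}\delta_{r+s,0}$ times the second derivative coefficient, which one checks equals $\tfrac13(r^2-\tfrac14)c_2$, and the $\tfrac{2M}{z_1-z_2}$ term gives $2M_{r+s}$, so $[Q_r,Q_s]=2M_{r+s}+\tfrac13\delta_{r+s,0}(r^2-\tfrac14)c_2$. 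Finally $[M_m,M_n]=0=[M_m,Q_r]$ correspond to the absence of $MM$ and $MQ$ OPE singularities (these OPEs are regular), which is consistent with $M(z)$ and $Q(z)$ commuting with $M(z)$. This establishes the stated equivalence.

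For the furthermore clause, I would argue as follows. Mutual locality of a pair of fields $A(z),B(z)$ on a module means $(z_1-z_2)^N[A(z_1),B(z_2)]_{\pm}=0$ for $N\gg0$; but from the commutator formulas just derived, each $[A(z_1),B(z_2)]_{\pm}$ is a finite sum of terms of the form $\partial_{z_2}^{j}\bigl(C(z_2)\delta(z_1-z_2)\bigr)$ with $j$ bounded (here $j\le 3$), and multiplying by $(z_1-z_2)^{N}$ with $N>3$ annihilates each such term because $(z_1-z_2)^{k+1}\partial_{z_2}^{k}\delta(z_1-z_2)=0$. Hence $L(z),M(z),Q(z)$ are pairwise mutually local with locality bound $\le 4$. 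Homogeneity is immediate: $L(z)$ and $M(z)$ are even fields of conformal weight $2$ and $Q(z)$ is an odd field of conformal weight $\tfrac32$, since $[L_0,X_n]=-nX_n$ for $X\in\{L,M\}$ and $[L_0,Q_r]=-rQ_r$, so each field is a $\mathbb Z_2$-homogeneous, $L_0$-eigen-valued formal distribution. The requirement that these be genuine fields on a smooth module — i.e. $A(z)w$ involves only finitely many negative powers of $z$ — is exactly the definition of smoothness: for each $w$, $L_iw=M_iw=Q_{i-1/2}w=0$ for $i$ large, which kills the sufficiently-positive modes and leaves $A(z)w\in M((z))$.

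I expect the main obstacle to be purely computational bookkeeping rather than conceptual: getting the index shifts right across the four families (the weights $2,2,\tfrac32$ and the half-integer grading of $Q$) and matching the $\delta$-function derivative coefficients $\tfrac{1}{k!}\binom{?}{?}$ to the cubic/quadratic polynomials $\tfrac1{12}(m^3-m)$ and $\tfrac13(r^2-\tfrac14)$ on the central terms. A useful sanity check I would build in is the embedding $\iota:\mathfrak g\to\mathfrak{ns}\otimes\mathbb C[t]/(t^3)$ from Section 2: the $L$–$M$–$Q$ OPEs must be the image of the known Neveu–Schwarz OPEs under $t^0,t^2,t^1$ respectively with $t^2=0$ forcing the $M$–$M$, $M$–$Q$ OPEs to vanish, which both confirms the signs and explains structurally why the $QQ$ OPE closes on $M$ with the $c_2$ central term. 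Once the commutator/OPE dictionary is pinned down, locality and homogeneity follow by the one-line $\delta$-function argument above, so the proof is short modulo that dictionary.
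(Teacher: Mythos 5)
Your proposal is correct and takes essentially the same route as the paper, which simply asserts that the OPEs "can be verified directly" and then records the locality bounds $(z_1-z_2)^4[L(z_1),L(z_2)]=(z_1-z_2)^4[L(z_1),M(z_2)]=(z_1-z_2)^2[L(z_1),Q(z_2)]=(z_1-z_2)^3[Q(z_1),Q(z_2)]=0$; your mode--OPE dictionary, $\delta$-function locality argument, and $L_0$-grading remarks are exactly the content being invoked. One cosmetic slip: the fourth-order pole contributes $\frac{c_1/2}{3!}\partial_{z_2}^{3}\delta(z_1-z_2)$ (not $\frac{c_1/2}{4!}$), and in the standard convention the derivative acts on $\delta$ alone rather than on the product $C_k(z_2)\delta(z_1-z_2)$ --- your final formula $\frac{1}{12}(m^3-m)c_1$ is nonetheless the right one.
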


	\begin{proof}
		The following operator product expansions (OPEs) can be verified directly.
It follows that
		$$
		(z_1-z_2)^4[L(z_1),L(z_2)]=(z_1-z_2)^4[L(z_1),M(z_2)]=0,
		$$
		$$
		(z_1-z_2)^2[L(z_1),Q(z_2)]=(z_1-z_2)^3[Q(z_1),Q(z_2)]=0.
		$$
	\end{proof}

	By  the local system theory  for vertex superalgebras developed in  \cite{Li}, we get the following result.

	\begin{prop} \label{RM1}For $c_1,c_2\in\cc$ ,
		$V_{\frak g}(c_1,c_2)$  has a  vertex
		superalgebra structure,  which is uniquely
		determined by the condition that ${\mathds{1}}$ is the vacuum vector, $Dv=L_{-1}v$,  and
		$$
		Y(L_{-2}\mathds{1},z)=L(z), \quad Y(M_{-2}{\mathds{1}},z)=M(z),\quad Y(Q_{-\frac{3}{2}}{\mathds{1}},z)=Q(z).
		$$
		Moreover, 
there is a one-to-one correspondence between smooth $\frak g$-modules   of central charge  $(c_1, c_2)$
 and  $V_{\frak g}(c_1,c_2)$-modules.  
	\end{prop}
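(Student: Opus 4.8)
The plan is to establish Proposition \ref{RM1} in two parts, using the general machinery of local systems of vertex operators from \cite{Li} (specifically the theorems asserting that any local system of $\mathbb{Z}_2$-graded mutually local vertex operators on a $\mathbb{Z}_2$-graded vector space generates a vertex superalgebra, and that a vertex superalgebra generated by such fields acts on the space itself as a module). First I would invoke the preceding lemma, which shows that $L(z)$, $M(z)$, $Q(z)$ are mutually local homogeneous vertex operators on every smooth $\mathfrak{g}$-module of central charge $(c_1,c_2)$; in particular they act on $V_{\mathfrak g}(c_1,c_2)$ itself, since the vacuum module is smooth. Then I would let $\mathcal{L}$ denote the local system generated by $\{L(z),M(z),Q(z)\}$ inside the space of fields on $V_{\mathfrak g}(c_1,c_2)$, which by \cite{Li} carries a canonical vertex superalgebra structure with vacuum $\mathrm{Id}$ and with $D$ acting as $\frac{d}{dz}$, and such that $V_{\mathfrak g}(c_1,c_2)$ is an $\mathcal{L}$-module.

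Next I would construct the map back: evaluation at the vacuum vector, $\Phi:\mathcal{L}\to V_{\mathfrak g}(c_1,c_2)$, $a(z)\mapsto a(z)\mathds{1}|_{z=0}=a_{-1}\mathds{1}$. The standard argument (as in the Neveu–Schwarz / $W(2,2)$ analogues, e.g. \cite{LPX1}, \cite{ZD}) shows $\Phi$ is a vertex superalgebra homomorphism sending $L(z)\mapsto L_{-2}\mathds{1}$, $M(z)\mapsto M_{-2}\mathds{1}$, $Q(z)\mapsto Q_{-3/2}\mathds{1}$, and that it is surjective because these three elements generate $V_{\mathfrak g}(c_1,c_2)$ as a vertex superalgebra (this last point uses that $V_{\mathfrak g}(c_1,c_2)$ is a quotient of the Verma module $M_{\mathfrak g}(0,0,c_1,c_2)$ by the submodule generated by $L_{-1}\mathds 1$, so that $U(\mathfrak g_-)\mathds 1$ is spanned by iterated components of these fields applied to $\mathds{1}$). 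To see $\Phi$ is injective, I would pull back the $\mathcal L$-module structure on $V_{\mathfrak g}(c_1,c_2)$: the composition gives an action of $V_{\mathfrak g}(c_1,c_2)$ on itself with $Y(L_{-2}\mathds 1,z)=L(z)$ etc., and then a weight/graded-dimension comparison, or the universality of the Verma-type construction, forces $\ker\Phi=0$. The uniqueness clause follows because a vertex superalgebra is generated by any generating set together with the state-field correspondence, so the assignments on $L_{-2}\mathds 1,M_{-2}\mathds 1,Q_{-3/2}\mathds 1$ determine $Y$ everywhere.

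For the final sentence — the one-to-one correspondence between smooth $\mathfrak g$-modules of central charge $(c_1,c_2)$ and $V_{\mathfrak g}(c_1,c_2)$-modules — I would argue both directions. Given a smooth $\mathfrak g$-module $W$ of central charge $(c_1,c_2)$, the lemma gives that $L(z),M(z),Q(z)$ are mutually local fields on $W$, and by \cite{Li} the local system they generate is a homomorphic image of $\mathcal L\cong V_{\mathfrak g}(c_1,c_2)$, making $W$ a $V_{\mathfrak g}(c_1,c_2)$-module via $Y_W(L_{-2}\mathds 1,z)=L(z)$, $Y_W(M_{-2}\mathds 1,z)=M(z)$, $Y_W(Q_{-3/2}\mathds 1,z)=Q(z)$. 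Conversely, given a $V_{\mathfrak g}(c_1,c_2)$-module $(W,d,Y_W)$, extracting the components of $Y_W(L_{-2}\mathds 1,z)$, $Y_W(M_{-2}\mathds 1,z)$, $Y_W(Q_{-3/2}\mathds 1,z)$ and using the OPEs (equivalently the Jacobi identity (M4)) recovers the $\mathfrak g$-relations, while (M1) forces the smoothness condition $L_iw=M_iw=Q_{i-1/2}w=0$ for $i$ large. These two assignments are mutually inverse, giving the claimed bijection (indeed an isomorphism of categories, matching module homomorphisms on both sides).

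The main obstacle I anticipate is the injectivity of $\Phi$ (equivalently, that $\mathcal L$ is not a proper quotient of $V_{\mathfrak g}(c_1,c_2)$): one must rule out extra relations in the local system beyond those coming from the Verma-module construction of $V_{\mathfrak g}(c_1,c_2)$. The cleanest route is to show that the composite action makes $V_{\mathfrak g}(c_1,c_2)$ a faithful module over $\mathcal L$, or to compare graded dimensions using the PBW basis $Q^{\mathbf k}M^{\mathbf j}L^{\mathbf i}\mathds 1$ against the corresponding iterated-component vectors in $\mathcal L$; this bookkeeping, while not deep, is where care is needed to be sure nothing collapses. Everything else is a routine application of the cited general theorems once the locality lemma is in hand.
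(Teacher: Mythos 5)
Your proposal is correct and follows essentially the same route as the paper, which simply invokes the preceding locality lemma together with Li's local system theory and gives no further details. Your expansion (local system $\mathcal L$ generated by $L(z),M(z),Q(z)$, evaluation at the vacuum, and the inverse pair of functors for the module correspondence) is exactly the standard argument that citation is meant to encapsulate, and your worry about injectivity is handled by the usual observation that the surjection $V_{\frak g}(c_1,c_2)\twoheadrightarrow\mathcal L$ composed with evaluation at $\mathds 1$ is the identity.
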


\subsection{Free field realizations}

\begin{defi}
		The { Heisenberg-Clifford algebra} $\mathfrak{hc}=\mathfrak{hc}_{\bar 0}\oplus \mathfrak{hc}_{\bar 1}$ is the Lie superalgebra generated by $a_n,b_n, c_r,{\bf k}, n\in\z, r\in\z+\frac{1}{2}$ with
	\begin{eqnarray*}\label{HC}
		&&[a_m,b_n]=m\delta_{m+n,0}{\bf k},\quad \{c_r,c_s\}=\delta_{r+s,0}{\bf k},\label{HC}\\
		&&[a_m,a_n]=[b_m,b_n]=[a_m,c_r]=[b_m,c_r]=0,\quad [{\bf k}, \mathfrak{hc}]=0,
	\end{eqnarray*}
	where $\mathfrak{hc}_{\bar 0}=\bigoplus_{n\in\z}\cc a_n\oplus\bigoplus_{n\in\z}\cc b_n\oplus\cc {\bf k}$ and $\mathfrak{hc}_{\bar 1}=\bigoplus_{r\in\z+\frac{1}{2}}\cc c_r$.
\end{defi}
 It is clear that $\mathfrak{hc}$ is  generated by free bosons $\{a_n,b_n, {\bf k}\mid n\in\z\}$  and  neutral fermions $\{c_n,{\bf k}\mid n\in \z+\frac{1}{2}\}$.

Define
	$$
	{\rm deg}\, a_n={\rm deg}\, b_n=n,\quad  {\rm deg}\, c_r=r, \quad\forall n\in\z, r\in\z+\frac{1}{2}.
	$$
	Then $\mathfrak{hc}$ is a $\frac{1}{2}\z$-graded Lie superalgebra. Set
	$$
	\mathfrak{hc}_{\pm}=\bigoplus_{n\in\z_+}\cc a_{\pm n}\oplus\bigoplus_{n\in\z_+}\cc b_{\pm n}\oplus  \bigoplus_{r\in\mathbb{N}+\frac{1}2}\cc c_{\pm r},\quad \mathfrak{hc}_{0}=\cc a_0\oplus\cc b_0\oplus\cc {\bf k}.
	$$
	Then
	$\mathfrak{hc}=\mathfrak{hc}_{+}\oplus \mathfrak{hc}_{0}\oplus \mathfrak{hc}_{-}.$ Note that $\mathfrak{hc}_0$ is the center of the superalgebra $\mathfrak{hc}$.

  For $a,b,\ell\in\cc$, let $\cc $  be the one-dimensional  ${\mathfrak{hc}}_{0}$-module  defined by
	$$ a_01=a1, \quad b_01=b 1,\quad   {\bf k}1=\ell 1.$$
	Let ${\mathfrak{hc}}_{+}$ act trivially on $\cc$, making $\cc$  be a $({\mathfrak{hc}}_{+}\oplus{\mathfrak{hc}}_{0})$-module.
	The {\bf  Verma module} for ${\mathfrak{hc}}$ is defined by
	$$ M_{\mathfrak{hc}}(\ell,a,b)=U({\mathfrak{hc}})\otimes_{U({\mathfrak{hc}}_{+}\oplus{\mathfrak{hc}}_{0})}\cc .$$

	An ${\mathfrak{hc}}$-module $M$ is
	said to be {\bf smooth} if for any $u\in M$, $a_iu=b_j u=c_ku=0$ for $i,j,k$ sufficiently large. For $\ell\in \mathbb{C}$, an ${\mathfrak{hc}}$-module $M$ is
	said to be {\bf level} $\ell$ if for any $u\in M$, ${\bf k} u=\ell u$. It is clear that the Verma module $M_{\mathfrak{hc}}(\ell,a,b)$ is a smooth $\mathfrak{g}$-module of level $\ell$.

 Suppose that $\phi: \mathfrak{h} \mathfrak{c}_{+} \oplus \mathfrak{h} \mathfrak{c}_0 \rightarrow \mathbb{C}$ is a homomorphism of Lie superalgebras. It follows that $\phi\left(c_r\right)=0$ for all $r>0$. Then $\mathbb{C} w_\phi$ becomes a one-dimensional $\left(\mathfrak{h} \mathfrak{c}_{+} \oplus \mathfrak{h} \mathfrak{c}_0\right.$)-module defined by $x w_\phi=\phi(x) w_\phi$ for all $x \in \mathfrak{h} \mathfrak{c}_{+} \oplus \mathfrak{h} \mathfrak{c}_0$. The induced $\mathfrak{hc}$-module $W_{\mathfrak{hc}}(\phi)=\mathrm{Ind}_{\mathfrak{hc}_{+} \oplus \mathfrak{h} \mathfrak{c}_0}^{\mathfrak{hc}} \mathbb{C} w_\phi$ is called a {\bf Whittaker module} associated to $\phi$. Note that $W_{\mathfrak{hc}}(\phi)$ is not necessarily a smooth $\mathfrak{hc}$-module.

	Following \cite{LPXZ}, we have

 \begin{lemm}\label{chc}
 The Whittaker module $W_{\mathfrak{hc}}(\phi)$ is simple if and only if $\phi(\mathbf{k}) \neq 0$.
 \end{lemm}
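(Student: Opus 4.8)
The plan is to prove Lemma \ref{chc} following the standard pattern for Whittaker modules over $\mathbb{Z}$-graded Lie superalgebras, adapted to the Heisenberg-Clifford setting. First I would dispose of the easy direction: if $\phi(\mathbf{k})=0$, then $\mathbf{k}$ acts as zero on $W_{\mathfrak{hc}}(\phi)$, so the commutation relations $[a_m,b_{-m}]=m\,\delta_{\cdot}\mathbf{k}$ and $\{c_r,c_{-r}\}=\mathbf{k}$ collapse and $\mathfrak{hc}$ acts through an abelian quotient; concretely, the vector $(a_{-1}-\phi(a_0)\cdot 0)w_\phi$ — more simply, $a_{-1}w_\phi$ — generates a proper nonzero submodule (it is annihilated by all of $\mathfrak{hc}_+\oplus\mathfrak{hc}_0$ modulo lower terms and does not involve $w_\phi$ in its leading term), so $W_{\mathfrak{hc}}(\phi)$ is not simple. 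I would phrase this cleanly by exhibiting an explicit proper submodule, e.g. the one generated by $a_{-1}w_\phi$, and checking it is proper using a PBW/degree argument.

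For the main direction, assume $\phi(\mathbf{k})=\ell\neq 0$. By the PBW theorem, $W_{\mathfrak{hc}}(\phi)$ has basis $\{c^{\mathbf{k}}b^{\mathbf{j}}a^{\mathbf{i}}w_\phi\}$ where $a^{\mathbf{i}},b^{\mathbf{j}}$ range over products of negative-index bosonic modes and $c^{\mathbf{k}}$ over products of distinct negative-index fermionic modes. Let $0\neq v$ be any element of an arbitrary nonzero submodule $N$; I want to show $w_\phi\in N$, which suffices since $w_\phi$ generates the whole module. Introduce a total order / degree function on PBW monomials (analogous to $\mathrm{deg}$ used in Section 4) and write $\mathrm{deg}(v)$ for the leading monomial. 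The key step is a lowering argument: if $\mathrm{deg}(v)$ involves some $b_{-m}$ with $m>0$ (as the ``outermost'' or highest-degree factor), then applying $a_m$ to $v$ produces a vector whose leading term has strictly smaller degree, because $[a_m,b_{-m}]=m\ell\neq 0$ kills one $b$-factor while $a_m$ commutes past everything else and annihilates $w_\phi$ (for $m$ chosen large enough relative to the support of $\phi$, $a_m w_\phi=\phi(a_m)w_\phi=0$). Symmetrically, $b_m$ lowers monomials built from $a_{-m}$, and $c_m$ lowers monomials built from $c_{-m}$ since $\{c_m,c_{-m}\}=\ell\neq 0$. Iterating, one reduces $v$ to a nonzero scalar multiple of $w_\phi$ (one must also handle the finitely many ``small'' indices not killed by $\phi$, but these contribute only to lower-order terms and the degree strictly decreases at each stage, so the induction terminates). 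Hence $w_\phi\in N$ and $N=W_{\mathfrak{hc}}(\phi)$.

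The main obstacle, as usual in these arguments, is bookkeeping: setting up the right total order on the PBW basis (respecting both the three families of generators $a,b,c$ and the ``length'' of monomials) so that each of the three lowering operators $a_m$, $b_m$, $c_m$ strictly decreases the degree, and carefully verifying that the cross-terms coming from $\mathfrak{hc}_+$ acting non-trivially on $w_\phi$ through $\phi$ (for the finitely many indices with $\phi\neq 0$) only affect strictly lower-degree parts. Once the order is chosen correctly this is routine and parallels \cite[proof of Theorem 5.2]{LPXZ} and the proof of Theorem \ref{main3} above, so I would invoke those techniques rather than repeat all details. I expect the whole proof to be short, essentially a one-paragraph reduction in each direction, citing \cite{LPXZ} for the general method as the statement ``Following \cite{LPXZ}'' already signals.
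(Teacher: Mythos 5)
Your overall strategy is sound, and in fact the paper gives no proof of Lemma \ref{chc} at all --- it simply cites \cite{LPXZ} --- so there is nothing to compare against except the standard argument, which is essentially what you propose. Both directions of your sketch are correct in outline: when $\phi(\mathbf{k})=0$ every bracket involving $a_{-1}$ acts as zero, so $U(\mathfrak{hc})a_{-1}w_\phi=a_{-1}U(\mathfrak{hc}_-)w_\phi$ is a nonzero proper submodule by PBW; and when $\ell:=\phi(\mathbf{k})\neq 0$ the positive modes act as annihilation operators and a lowering argument reduces any nonzero vector of a submodule to a nonzero multiple of $w_\phi$.

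One detail in your second direction needs repair. You assume that $\phi$ vanishes on $a_m,b_m$ for $m$ large (``for $m$ chosen large enough relative to the support of $\phi$, $a_mw_\phi=0$'') and speak of ``the finitely many small indices not killed by $\phi$.'' Nothing in the hypothesis forces this: since $\mathfrak{hc}_+\oplus\mathfrak{hc}_0$ is abelian modulo odd elements, $\phi(a_m)$ and $\phi(b_m)$ may be nonzero for \emph{all} $m\ge 0$, and the paper explicitly warns that $W_{\mathfrak{hc}}(\phi)$ need not be smooth. The fix is cheap and makes the argument cleaner: instead of $a_m$, apply $a_m-\phi(a_m)\,\mathrm{id}$ (which still preserves any submodule), and note that on $U(\mathfrak{hc}_-)w_\phi$ it acts exactly as the derivation $m\ell\,\partial/\partial b_{-m}$, with no error terms whatsoever; likewise $b_m-\phi(b_m)$ acts as $m\ell\,\partial/\partial a_{-m}$ and $c_r$ as the odd derivation $\ell\,\partial/\partial c_{-r}$. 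With these exact derivation formulas, the reduction of a maximal monomial to $w_\phi$ is immediate and no ordering subtleties or largeness assumptions on $m$ arise. With that correction your proof is complete.
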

 

	It is clear that  there is
	a canonical vertex superalgebra structure of  on $M_{\mathfrak{hc}}(1,0,0)$.
	\begin{prop}[\cite{Li}]\label{RM2}
		$M_{\mathfrak{hc}}(1,0,0)$  has a  vertex superalgebra structure with ${\mathds{1}}=1\otimes 1$ and
		$$
		Y(a_{-1}{\mathds{1}},z)=a(z),\quad Y(b_{-1}{\mathds{1}},z)=b(z),\quad Y(c_{-\frac{1}{2}}{\mathds{1}},z)=c(z),
		$$
		where
		$$
		a(z)=\sum_{n\in\z}a_nz^{-n-1},\
		b(z)=\sum_{n\in\z}b_nz^{-n-1},\
		c(z)=\sum_{r\in\z+\frac{1}{2}}c_rz^{-r-\frac{1}{2}} \in {\rm End} (M_{\mathfrak{hc}}(1,0,0)) [[z,z^{-1}]].
		$$
		Moreover,  there is a one-to-one correspondence between smooth $\frak hc$-modules   of central charge  $1$  and $M_{\mathfrak{hc}}(1,0,0)$-modules.

	\end{prop}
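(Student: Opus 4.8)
The plan is to mirror the proof of Proposition~\ref{RM1}, with the N=1 BMS fields replaced by the free bosonic and fermionic fields of $\mathfrak{hc}$ and with the local-system theory of \cite{Li} doing the heavy lifting. First I would compute the operator product expansions of $a(z),b(z),c(z)$ directly from the relations of $\mathfrak{hc}$: on any smooth $\mathfrak{hc}$-module on which $\mathbf k$ acts as $\ell$,
$$
a(z_1)b(z_2)\sim\frac{\ell}{(z_1-z_2)^2},\qquad c(z_1)c(z_2)\sim\frac{\ell}{z_1-z_2},
$$
while $a(z_1)a(z_2)$, $b(z_1)b(z_2)$, $a(z_1)c(z_2)$, $b(z_1)c(z_2)$ all have trivial OPE. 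Equivalently $(z_1-z_2)^2[a(z_1),b(z_2)]=0$ and $(z_1-z_2)\{c(z_1),c(z_2)\}=0$, the remaining pairs (anti)commuting outright, so $\{a(z),b(z),c(z)\}$ is a set of pairwise local $\z_2$-homogeneous vertex operators on every smooth $\mathfrak{hc}$-module. Specializing to $M_{\mathfrak{hc}}(1,0,0)$ (so $\ell=1$ and $a_01=b_01=0$), one also has $a(z)\mathds{1},b(z)\mathds{1},c(z)\mathds{1}\in M_{\mathfrak{hc}}(1,0,0)[[z]]$ with $\lim_{z\to0}a(z)\mathds{1}=a_{-1}\mathds{1}$, $\lim_{z\to0}b(z)\mathds{1}=b_{-1}\mathds{1}$, $\lim_{z\to0}c(z)\mathds{1}=c_{-\frac12}\mathds{1}$.

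Next I would invoke \cite{Li}: the pairwise local fields $a(z),b(z),c(z)$ generate, under normally ordered products and formal derivatives, a vertex superalgebra $\mathcal V$ of vertex operators on $M_{\mathfrak{hc}}(1,0,0)$, and $M_{\mathfrak{hc}}(1,0,0)$ is a module over it. Evaluating at the vacuum gives a vertex superalgebra homomorphism $\Phi:\mathcal V\to M_{\mathfrak{hc}}(1,0,0)$, $\alpha(z)\mapsto\lim_{z\to0}\alpha(z)\mathds{1}$, and the crux is that $\Phi$ is a linear isomorphism. Surjectivity is a spanning argument: the modes of $a(z),b(z),c(z)$ give back $a_{-1},b_{-1},c_{-\frac12}$, and iterating normally ordered products and derivatives one reaches every PBW monomial $\cdots c_{-r_1}\cdots b_{-n_1}\cdots a_{-m_1}\mathds{1}$ up to lower-order corrections from the commutators, so $\Phi$ is onto. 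Injectivity is then a $\frac12\z$-graded dimension count: by the PBW theorem $M_{\mathfrak{hc}}(1,0,0)$ has graded dimension $\prod_{k\ge1}(1-q^k)^{-2}\prod_{k\in\mathbb N+\frac12}(1+q^k)$, which bounds the graded dimension of $\mathcal V$ from below via $\Phi$, forcing $\Phi$ to be bijective in each degree. This transports the vertex superalgebra structure of $\mathcal V$ to $M_{\mathfrak{hc}}(1,0,0)$, with $\mathds{1}=1\otimes1$, $D$ the canonical translation operator ($D\mathds{1}=0$), and $Y(a_{-1}\mathds{1},z)=a(z)$, $Y(b_{-1}\mathds{1},z)=b(z)$, $Y(c_{-\frac12}\mathds{1},z)=c(z)$ by construction; uniqueness of the structure subject to these formulas is the standard reconstruction argument.

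Finally, for the module correspondence I would argue exactly as in Proposition~\ref{RM1}. Given an $M_{\mathfrak{hc}}(1,0,0)$-module $(W,d,Y_W)$, the fields $Y_W(a_{-1}\mathds{1},z)$, $Y_W(b_{-1}\mathds{1},z)$, $Y_W(c_{-\frac12}\mathds{1},z)$ inherit the OPEs above from the module Jacobi identity (M4), so their modes satisfy the defining relations of $\mathfrak{hc}$ with $\mathbf k$ acting as $1$, while the truncation axiom (M1) is precisely smoothness; conversely, a smooth level-$1$ $\mathfrak{hc}$-module furnishes such pairwise local fields, from which \cite{Li} manufactures a module map $Y_W$ over $M_{\mathfrak{hc}}(1,0,0)$. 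These two assignments are mutually inverse, giving the asserted bijection. I expect the only genuine obstacle to be the isomorphism $\Phi:\mathcal V\xrightarrow{\ \sim\ }M_{\mathfrak{hc}}(1,0,0)$ in the second step — that is, matching the PBW spanning property with the graded-dimension bound; the OPE computation in the first step and the module correspondence in the third are respectively a short calculation and a formal consequence of \cite{Li}.
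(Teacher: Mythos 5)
The paper offers no proof of this proposition at all: it is quoted directly from \cite{Li} as a known instance of the standard free-field (Heisenberg--Clifford) vertex superalgebra construction, exactly parallel to Proposition \ref{RM1}. Your reconstruction via Li's local-system theory --- OPEs/locality of $a(z),b(z),c(z)$, the generated vertex superalgebra $\mathcal V$ acting on $M_{\mathfrak{hc}}(1,0,0)$, the evaluation-at-vacuum isomorphism, and the module correspondence --- is the standard argument and is essentially correct. One small wording issue: surjectivity of $\Phi$ only bounds $\dim\mathcal V_n$ from \emph{below}, so to conclude bijectivity you must also record the matching upper bound $\dim\mathcal V_n\le p(n)$ coming from the fact that $\mathcal V_n$ is spanned by the normally ordered monomial fields of degree $n$ (equivalently, that these map to the PBW basis by an upper-triangular, hence invertible, transition matrix); with that observation made explicit the proof is complete.
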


	For $u,v\in M_{\mathfrak{hc}}(1,0,0)$,   a {\bf contraction} of the vertex operators $Y(u,z_1)$ and $Y(v,z_2)$   is defined by
		$$
		\underbrace{Y(u,z_1)Y(v,z_2)}=Y(u,z_1)Y(v,z_2)-\NO Y(u,z_1)Y(v,z_2)\NO.
		$$
		It follows that
		$$
		\underbrace{a(z_1)b(z_2)}=\underbrace{b(z_1)a(z_2)}= \frac{1}{(z_1-z_2)^2},\quad
		\underbrace{c(z_1)c(z_2)}=\frac{1}{z_1-z_2}.
		$$

	For simplicity, let $a=a_{-1}{\mathds{1}}, b=b_{-1}{\mathds{1}},c=c_{-\frac{1}{2}}{\mathds{1}}\in M_{\mathfrak{bc}}(1,0,0)$.
	
	\begin{prop}\label{HTH}
		For $\rho\in\cc $, there is a homomorphism of vertex   superalgebras
		$$
		\Phi_{\rho}: V_{\frak g}\left(\frac{5}{2},-12\rho^2\right)\to M_{\mathfrak{hc}}(1,0,0)
		$$
		uniquely determined by
  \begin{align*}
      L_{-2}{\mathds{1}}&\mapsto a_{-1}b+\rho a_{-2}{\mathds{1}}+\frac{1}{2}c_{-\frac{3}{2}}c,\\
       M_{-2}{\mathds{1}}&\mapsto \frac{1}{2}b_{-1}b+\rho b_{-2}{\mathds{1}},\\
        Q_{-\frac{3}{2}}{\mathds{1}}&\mapsto b_{-1}c+\rho c_{-\frac{3}{2}}{\mathds{1}}.
  \end{align*}
  or  equivalently, 
		\begin{align*}
			L(z)&\mapsto \mathcal{L}(z)=\NO a(z)b(z)\NO+ \rho a'(z)+ \frac{1}{2}\NO c'(z)c(z)\NO,\\
			M(z)&\mapsto  \mathcal{M}(z)=\frac{1}{2} \NO b(z)b(z) \NO+ \rho b'(z),\\
			Q(z)&\mapsto \mathcal{Q}(z)=\NO b(z)c(z)\NO+ 2\rho c'(z).
		\end{align*}
	\end{prop}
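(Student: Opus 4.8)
The plan is to verify that the assignment $\Phi_\rho$ respects the defining relations of the vacuum vertex superalgebra $V_{\frak g}(5/2,-12\rho^2)$, and for this it suffices to check that the images $\mathcal L(z)$, $\mathcal M(z)$, $\mathcal Q(z)$ satisfy exactly the operator product expansions listed in the Lemma preceding Proposition \ref{RM1}, with $c_1=5/2$ and $c_2=-12\rho^2$. By the one-to-one correspondence in Proposition \ref{RM1} between smooth $\frak g$-modules of central charge $(c_1,c_2)$ and $V_{\frak g}(c_1,c_2)$-modules, together with the fact that $V_{\frak g}(c_1,c_2)$ is strongly generated by $L_{-2}{\mathds 1}$, $M_{-2}{\mathds 1}$, $Q_{-3/2}{\mathds 1}$, a vertex superalgebra homomorphism out of $V_{\frak g}(c_1,c_2)$ is uniquely determined by the images of these three generators, and it exists precisely when those images generate fields closing on the same OPEs (this is the standard consequence of the local system / universality construction of \cite{Li}, already invoked in Proposition \ref{RM1}). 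So the entire content reduces to an OPE computation inside $M_{\mathfrak{hc}}(1,0,0)$.

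First I would record the basic contractions, which are given in the excerpt:
\[
\underbrace{a(z_1)b(z_2)}=\underbrace{b(z_1)a(z_2)}=\frac{1}{(z_1-z_2)^2},\qquad \underbrace{c(z_1)c(z_2)}=\frac{1}{z_1-z_2},
\]
with all other contractions among $a$, $b$, $c$ vanishing. Then I would apply Wick's theorem for vertex operators (fermionic signs included, since $c(z)$ is odd) to compute each of the four relevant products $\mathcal L\mathcal L$, $\mathcal L\mathcal M$, $\mathcal L\mathcal Q$, $\mathcal Q\mathcal Q$. For instance, in $\mathcal L(z_1)\mathcal L(z_2)$ the term $\NO a b\NO(z_1)\,\NO a b\NO(z_2)$ produces a double contraction giving $\tfrac{2}{(z_1-z_2)^2}$-type and $\tfrac{1}{(z_1-z_2)^4}$-type singular terms, the $\rho a'$ cross terms with $\NO ab\NO$ contribute to the $(z_1-z_2)^{-4}$ coefficient, and the $\tfrac12\NO c'c\NO$ free-fermion piece is exactly the $c=1/2$ Virasoro field for a neutral fermion, contributing its own $\tfrac{1/4}{(z_1-z_2)^4}$. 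Adding $\tfrac{3}{2}$ (the $ab$-system central charge, from $\rho$ and the boson pairing, which needs a short computation) one should land on $\tfrac{c_1/2}{(z_1-z_2)^4}$ with $c_1=5/2$; the $(z_1-z_2)^{-2}$ and $(z_1-z_2)^{-1}$ coefficients must assemble into $2\mathcal L(z_2)$ and $\mathcal L'(z_2)$ respectively, confirming $\mathcal L$ is a Virasoro field of central charge $5/2$. Similarly $\mathcal Q(z_1)\mathcal Q(z_2)$: the leading triple/double contractions from $\NO bc\NO\,\NO bc\NO$ plus the $2\rho c'$ cross terms should give $\tfrac{2c_2/3}{(z_1-z_2)^3}+\tfrac{2\mathcal M(z_2)}{z_1-z_2}$, which is what fixes $c_2=-12\rho^2$ (the $-12$ coming from combining the $\rho$-linear and $\rho^2$ contributions against the normalization $\tfrac{2c_2}{3}$). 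The mixed OPEs $\mathcal L\mathcal M$ and $\mathcal L\mathcal Q$ say that $\mathcal M$ is primary of weight $2$ with the stated central term and $\mathcal Q$ is primary of weight $3/2$; these are lighter computations of the same type. I would also check $\mathcal M\mathcal M\sim 0$ and $\mathcal M\mathcal Q\sim 0$, which are immediate since $\NO bb\NO$ and $b'$ contract trivially with $\NO bc\NO$, $c'$, $\NO bb\NO$.

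The main obstacle is purely bookkeeping: keeping track of the fermionic signs in the Wick contractions involving $c(z)$ (in particular in $\mathcal Q\mathcal Q$ and in the $\tfrac12\NO c'c\NO$ piece of $\mathcal L$), and correctly extracting normal-ordered fields from the finite parts so that the regular terms really reassemble into $\mathcal L$, $\mathcal M$, $\mathcal Q$ and their derivatives rather than some other combination — this uses the identities $\NO\NO uv\NO w\NO=\NO u\NO vw\NO\NO+(\text{contraction corrections})$ and Taylor expansion of fields about $z_2$. Once the four singular OPEs are verified to match the Lemma verbatim with $(c_1,c_2)=(5/2,-12\rho^2)$, the homomorphism $\Phi_\rho$ exists and is unique by the universal property of $V_{\frak g}(c_1,c_2)$, and the equivalence of the ``or equivalently'' formulation follows by reading off the $z^{-n-1}$ coefficients, i.e. $L_{-2}{\mathds1}=\mathcal L_{-2}{\mathds1}$ etc., using $Y(v,z){\mathds1}\in V[[z]]$ with constant term $v$.
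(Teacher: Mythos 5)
Your proposal follows essentially the same route as the paper: reduce existence and uniqueness of $\Phi_\rho$ to checking that $\mathcal L(z),\mathcal M(z),\mathcal Q(z)$ close on the stated OPEs with $(c_1,c_2)=\left(\frac{5}{2},-12\rho^2\right)$ (invoking the universal property coming from Proposition \ref{RM1} and the local system theory of \cite{Li}), and then carry out those OPEs by Wick's theorem from the basic contractions of $a,b,c$, exactly as the paper's proof does. The only quibble is your parenthetical that the $ab$-system contributes central charge $\frac{3}{2}$: it contributes $2$ (i.e.\ a term $\frac{1}{(z_1-z_2)^4}$, with the $\rho a'$ piece adding nothing to the fourth-order pole since $a$ contracts trivially with itself), which combined with the fermion's $\frac{1/4}{(z_1-z_2)^4}$ gives $\frac{c_1}{2}=\frac{5}{4}$ — but you flagged this as needing computation and your stated conclusion $c_1=\frac{5}{2}$ is correct.
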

	\begin{proof} 
		By the Wick theorem  in \cite{Kac1}, we have
		\begin{eqnarray*}
			&&\underbrace{\NO a(z_1)b(z_1)\NO \NO a(z_2)b(z_2)\NO }\\
			&=& \underbrace{a(z_1)b(z_2)}\NO b(z_1)a(z_2)\NO +\underbrace{b(z_1)a(z_2)}\NO a(z_1)b(z_2)\NO + \underbrace{a(z_1)b(z_2)}\underbrace{b(z_1)a(z_2)}\\
			&=&\frac{\NO b(z_1)a(z_2)\NO+\NO a(z_1)b(z_2)\NO}{(z_1-z_2)^2}+\frac{1}{{(z_1-z_2)^4}}\\
			&=&\frac{2\NO a(z_2)b(z_2)\NO}{(z_1-z_2)^2}+\frac{\NO a'(z_2)b(z_2)\NO+\NO b'(z_2)a(z_2)\NO}{z_1-z_2}+\frac{1}{{(z_1-z_2)^4}};
		\end{eqnarray*}
		
		\begin{eqnarray*}
			&&\underbrace{\NO a(z_1)b(z_1)\NO a'(z_2)}+\underbrace{a'(z_1) \NO a(z_2)b(z_2)\NO} \\
			&= &\underbrace{a(z_1)a'(z_2)}b(z_1)+\underbrace{b(z_1)a'(z_2)}a(z_1)+ \underbrace{a'(z_1)a(z_2)}b(z_2)+\underbrace{a'(z_1)b(z_2)}a(z_2)\\
			&=&\frac{2}{{(z_1-z_2)^3}}a(z_2)+\frac{2}{{(z_1-z_2)^2}}a'(z_2)+\frac{1}{{z_1-z_2}}a''(z_2)-\frac{2}{{(z_1-z_2)^3}}a(z_2)\\
			&=&\frac{2}{{(z_1-z_2)^2}}a'(z_2)+\frac{1}{{z_1-z_2}}a''(z_2);
		\end{eqnarray*}

		\begin{eqnarray*}
			&&\underbrace{\NO c'(z_1)c(z_1)\NO  \NO c'(z_2)c(z_2)\NO}\\
			&=& -\underbrace{c'(z_1)c'(z_2)}\NO c(z_1)c(z_2)\NO+\underbrace{c'(z_1)c(z_2)}\NO c(z_1)c'(z_2)\NO\\
			&&+\underbrace{c(z_1)c'(z_2)}\NO c'(z_1)c(z_2)\NO-\underbrace{c(z_1)c(z_2)}\NO c'(z_1)c'(z_2)\NO\\
			&&-\underbrace{c'(z_1)c'(z_2)}\underbrace{c(z_1)c(z_2)}+\underbrace{c'(z_1)c(z_2)}\underbrace{c(z_1)c'(z_2)}\\
			&=&\frac{2}{(z_1-z_2)^3}\NO c(z_1)c(z_2)\NO+\frac{-1}{(z_1-z_2)^2}\NO c(z_1)c'(z_2)\NO\\
			&&+\frac{1}{(z_1-z_2)^2}\NO c'(z_1)c(z_2)\NO-\frac{1}{z_1-z_2}\NO c'(z_1)c'(z_2)\NO+\frac{2}{{(z_1-z_2)^4}}-\frac{1}{{(z_1-z_2)^4}}\\
			&=&\frac{2}{(z_1-z_2)^3}\NO c(z_2)c(z_2)\NO+\frac{2}{(z_1-z_2)^2}\NO c'(z_2)c(z_2)\NO+\frac{1}{z_1-z_2}\NO c''(z_2)c(z_2)\NO\\
			&&-\frac{1}{(z_1-z_2)^2}\NO c(z_2)c'(z_2)\NO-\frac{1}{z_1-z_2}\NO c'(z_2)c'(z_2)\NO\\
			&&+\frac{1}{(z_1-z_2)^2}\NO c'(z_2)c(z_2)\NO+\frac{1}{z_1-z_2}\NO c''(z_2)c(z_2)\NO\\
			&&-\frac{1}{z_1-z_2}\NO c'(z_2)c'(z_2)\NO+\frac{1}{{(z_1-z_2)^4}}\\
			&=&\frac{4\NO c'(z_2)c(z_2)\NO}{(z_1-z_2)^2}+\frac{2\NO c''(z_2)c(z_2)\NO}{z_1-z_2}+\frac{1}{{(z_1-z_2)^4}};
		\end{eqnarray*}

		\begin{eqnarray*}
			&&\underbrace{\mathcal{L}(z_1)\mathcal{L}(z_2)}\\
			&=&\frac{2\NO a(z_2)b(z_2)\NO}{(z_1-z_2)^2}+\frac{\NO a'(z_2)b(z_2)\NO+\NO b'(z_2)a(z_2)\NO}{z_1-z_2}+\frac{1}{{(z_1-z_2)^4}}\\
			&&+\frac{2\rho a'(z_2)}{{(z_1-z_2)^2}}+\frac{\rho a''(z_2)}{{z_1-z_2}}+\frac{2\frac{1}{2}\NO c'(z_2)c(z_2)\NO}{(z_1-z_2)^2}+\frac{\frac{1}{2}\NO c''(z_2)c(z_2)\NO}{z_1-z_2}+\frac{\frac{1}{4}}{{(z_1-z_2)^4}}\\
			&=&\frac{2\left(\NO a(z_2)b(z_2)\NO+2\rho a'(z_2)+\frac{1}{2}\NO c'(z_2)c(z_2)\NO+\frac{1}{2}\NO c'(z_2)c(z_2)\NO\right)}{(z_1-z_2)^2}\\
			&&+\frac{\NO a'(z_2)b(z_2)\NO+\NO b'(z_2)a(z_2)\NO+\rho a''(z_2)+\frac{1}{2}\NO c''(z_2)c(z_2)\NO}{z_1-z_2}+\frac{\frac{5}{4}}{{(z_1-z_2)^4}}\\
			&=&\frac{\frac{5}{4}}{(z_1-z_2)^4}+\frac{2\mathcal{L}(z_2)}{(z_1-z_2)^2}+\frac{\mathcal{L}'(z_2)}{z_1-z_2};
		\end{eqnarray*}

		\begin{eqnarray*}
			&&\underbrace{\mathcal{L}(z_1)\mathcal{M}(z_2)}
			\\
			&=&\underbrace{a(z_1)b(z_2)}   b(z_1)b(z_2)+\rho \underbrace{a(z_1) b'(z_2)} b(z_1) +\rho \underbrace{a'(z_1)b(z_2)}+\rho^2 a'(z_1)b'(z_2)\\
			&=&\frac{b(z_2)b(z_2)}{(z_1-z_2)^2}+\frac{b'(z_2)b(z_2)}{z_1-z_2} +\frac{2\rho}{(z_1-z_2)^3} b(z_1)\\
			&&+\frac{2\rho b'(z_2)}{(z_1-z_2)^2}+\frac{\rho b''(z_1)}{z_1-z_2}+\frac{-2\rho}{(z_1-z_2)^3}+\frac{-6\rho^2}{(z_1-z_2)^4}\\
			&=&\frac{ b(z_2)b(z_2)+2\rho b'(z_2)}{(z_1-z_2)^2}+\frac{b'(z_2)b(z_2)+\rho b''(z_1)}{z_1-z_2} +\frac{-6\rho^2}{(z_1-z_2)^4}\\
			&=&\frac{-6\rho^2}{(z_1-z_2)^4}+\frac{2\mathcal{M}(z_2)}{(z_1-z_2)^2}+\frac{\mathcal{M}'(z_2)}{z_1-z_2};
		\end{eqnarray*}

\begin{eqnarray*}
			&&\underbrace{\mathcal{L}(z_1)\mathcal{Q}(z_2)}\\
			&=&\frac{b(z_2)c(z_2)}{(z_1-z_2)^2}+\frac{b'(z_2)c(z_2)}{z_1-z_2}+\frac{-2\rho c(z_2)}{(z_1-z_3)^3}+\frac{\frac{1}{2}c(z_2)b(z_2)}{(z_1-z_2)^2}+\frac{\frac{1}{2}c'(z_2)b(z_2)}{z_1-z_2}+\frac{\frac{1}{2}c'(z_2)b(z_2)}{z_1-z_2}\\
			&&+\frac{2\rho c(z_2)}{(z_1-z_2)^3}+\frac{2\rho c'(z_2)}{(z_1-z_2)^2}+\frac{\rho c''(z_2)}{z_1-z_2}+\frac{\rho c'(z_2)}{(z_1-z_2)^2}+\frac{\rho c''(z_2)}{z_1-z_2}\\
			&=&\frac{\frac{3}{2}b(z_2)c(z_2)+3\rho c'(z_2)}{(z_1-z_2)^2}+\frac{b'(z_2)c(z_2)+c'(z_2)b(z_2)+2\rho c''(z_2)}{z_1-z_2}\\
			&=& \frac{\frac{3}{2}\mathcal{Q}(z_2)}{(z_1-z_2)^2}+\frac{\mathcal{Q}'(z_2)}{z_1-z_2};
		\end{eqnarray*}
  and	
		\begin{eqnarray*}
			&& \underbrace{\mathcal{Q}(z_1)\mathcal{Q}(z_2)}\\
			&=&\frac{b(z_1)b(z_2)}{z_1-z_2}+\frac{2\rho b(z_2)}{(z_1-z_2)^2}+\frac{2\rho b'(z_2)}{z_1-z_2}+\frac{-2\rho b(z_2)}{(z_1-z_2)^2}+\frac{-8\rho^2}{(z_1-z_2)^3}\\
			&=&\frac{2\rho b'(z_2)+b(z_2)b(z_2)}{z_1-z_2}+\frac{-8\rho^2}{(z_1-z_2)^3}\\
			&=&\frac{-8\rho^2}{(z_1-z_2)^3}+\frac{2\mathcal{M}(z_2)}{z_1-z_2}.
		\end{eqnarray*}

	\end{proof}

		We are now in a position to state the main result of this section.

	\begin{theo}\label{main2} For $\rho\in \cc$, every smooth $\mathfrak{hc}$-module $W$ of level $1$ becomes a smooth $\frak g$-module of central charge $\left(\frac{5}{2},-12\rho^2\right)$  with the following actions:  
		\begin{align*}
			L_n& \mapsto  \sum_{m\in\z} \NO a_{m}b_{n-m}\NO-(n+1)\rho a_n- \frac{1}{2}\sum_{s\in\z+\frac{1}{2}}(s+\frac{1}{2})\NO c_{s}c_{n-s}\NO,\\
			M_n&\mapsto\frac{1}{2}\sum_{m\in\z} b_{m}b_{n-m}-(n+1)\rho b_n,\\
			Q_r&\mapsto  \sum_{s\in\z+\frac{1}{2}}(r+\frac{1}{2}) b_{r-s}c_{s} -2(r+\frac12)\rho c_{r},\\
			{\bf c}_1 &\mapsto \frac52,\\
			{\bf c}_2 &\mapsto -12\rho^2,		\end{align*}
		 for any $ n\in\z,r\in\z+\frac12, \rho\in \cc$. 
	\end{theo}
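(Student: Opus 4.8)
The plan is to obtain Theorem \ref{main2} by composing the three functorial correspondences already established. Let $W$ be a smooth $\mathfrak{hc}$-module of level $1$. By Proposition \ref{RM2}, $W$ acquires a canonical structure of module over the vertex superalgebra $M_{\mathfrak{hc}}(1,0,0)$, with $Y_W(a_{-1}{\mathds 1},z)=a(z)$, $Y_W(b_{-1}{\mathds 1},z)=b(z)$ and $Y_W(c_{-\frac12}{\mathds 1},z)=c(z)$. Since $\Phi_\rho\colon V_{\frak g}(\frac52,-12\rho^2)\to M_{\mathfrak{hc}}(1,0,0)$ of Proposition \ref{HTH} is a homomorphism of vertex superalgebras, setting $Y^W(v,z):=Y_W(\Phi_\rho(v),z)$ turns $W$ into a module over $V_{\frak g}(\frac52,-12\rho^2)$; axioms (M1)--(M4) for $Y^W$ follow at once from those for $Y_W$ together with $\Phi_\rho({\mathds 1})={\mathds 1}$, $\Phi_\rho D=D\Phi_\rho$ and $\Phi_\rho(Y(u,z)v)=Y(\Phi_\rho(u),z)\Phi_\rho(v)$. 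Finally, Proposition \ref{RM1} (applied with $(c_1,c_2)=(\frac52,-12\rho^2)$) identifies $V_{\frak g}(\frac52,-12\rho^2)$-modules with smooth $\frak g$-modules of central charge $(\frac52,-12\rho^2)$, and this is the $\frak g$-structure we put on $W$. In particular the smoothness of $W$ as a $\frak g$-module, and the relations ${\bf c}_1\mapsto\frac52$, ${\bf c}_2\mapsto-12\rho^2$, are automatic --- the latter because on $V_{\frak g}(\frac52,-12\rho^2)$, hence on every module over it, the central elements act by these scalars.

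It remains only to write this $\frak g$-action explicitly. Under the correspondence of Proposition \ref{RM1} the operators $L_n,M_n,Q_r$ on $W$ are recovered from the generating fields by $\sum_n L_n z^{-n-2}=Y^W(L_{-2}{\mathds 1},z)$, $\sum_n M_n z^{-n-2}=Y^W(M_{-2}{\mathds 1},z)$ and $\sum_r Q_r z^{-r-\frac32}=Y^W(Q_{-\frac32}{\mathds 1},z)$. By the definition of $Y^W$ and the equivalent form of $\Phi_\rho$ recorded in Proposition \ref{HTH}, these are exactly the fields $\mathcal L(z)=\NO a(z)b(z)\NO+\rho a'(z)+\frac12\NO c'(z)c(z)\NO$, $\mathcal M(z)=\frac12\NO b(z)b(z)\NO+\rho b'(z)$, $\mathcal Q(z)=\NO b(z)c(z)\NO+2\rho c'(z)$ acting on $W$. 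Expanding each normally ordered product and each derivative into $\mathfrak{hc}$-modes and extracting the coefficient of $z^{-n-2}$ (respectively $z^{-r-\frac32}$) produces the displayed formulas; each resulting mode sum is a well-defined operator on $W$ because, $W$ being smooth over $\mathfrak{hc}$, for any fixed $w\in W$ all but finitely many of $a_m,b_m,c_s$ annihilate $w$, and the same then holds for all but finitely many of $L_n,M_n,Q_r$, giving back the smoothness noted above.

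The substantive part of the theorem --- that $\mathcal L,\mathcal M,\mathcal Q$ close under the operator product expansions defining $\frak g$, with the central terms corresponding to $c_1=\frac52$ and $c_2=-12\rho^2$ --- has already been carried out in the proof of Proposition \ref{HTH} by repeated use of the Wick theorem, so nothing further is needed for it here. The one point genuinely worth checking is that $\mathcal L(z),\mathcal M(z),\mathcal Q(z)$ are mutually local homogeneous fields on an \emph{arbitrary} smooth level-$1$ $\mathfrak{hc}$-module $W$, not merely on the Fock space $M_{\mathfrak{hc}}(1,0,0)$; but this is clear since they are normally ordered products and formal derivatives of the mutually local fields $a(z),b(z),c(z)$, so the local-system formalism of \cite{Li} underlying Propositions \ref{RM1}--\ref{RM2} applies verbatim. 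Alternatively, one could avoid vertex algebras entirely and verify Theorem \ref{main2} by a direct but longer computation of every bracket of $\frak g$ from the Heisenberg--Clifford relations $[a_m,b_n]=m\delta_{m+n,0}$ and $\{c_r,c_s\}=\delta_{r+s,0}$, the normal-ordering constants reproducing precisely the central charge $(\frac52,-12\rho^2)$; the main obstacle along that route is the fermionic normal-ordering bookkeeping in the $[L_m,L_n]$ and $[Q_r,Q_s]$ relations.
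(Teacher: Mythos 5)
Your proposal is correct and follows essentially the same route as the paper: the paper's proof is exactly the three-step composition you describe, citing Proposition \ref{RM2} to make $W$ an $M_{\mathfrak{hc}}(1,0,0)$-module, pulling back along $\Phi_\rho$ from Proposition \ref{HTH}, and invoking Proposition \ref{RM1} to recover the smooth $\frak g$-structure. Your additional remarks on mode expansion, well-definedness of the mode sums, and locality on an arbitrary smooth module are just more explicit bookkeeping of what the paper leaves implicit.
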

	\begin{proof}
		By Proposition \ref{RM2},  any smooth $\mathfrak{hc}$-module $W$ of level $1$  is  an $M_{\mathfrak{hc}}(1,0,0)$-module.    From  Proposition \ref{HTH},      $W$ becomes  a  $V_{\frak g}\left(\frac{5}{2},-12\rho^2\right)$-module.  By Proposition \ref{RM1}, $W$  is a smooth $\frak g$-module of central charge $\left(\frac{5}{2},-12\rho^2\right)$.
	\end{proof}

By Theorem \ref{main2},
the Verma module $M_{\mathfrak{hc}}(1,a,b)$  is a  $\frak g$-module of central charge $\left(\frac{5}{2},-12\rho^2\right)$. 
We refer to this module as a {\bf Fock module} over $\mathfrak{g}$, denoted by $\mathcal{F}_{\mathfrak{g}}(a,b,\rho)$.

	\begin{coro} For $a,b,\rho\in\cc$,
		the Fock $\frak g$-module  $\mathcal{F}_{\mathfrak{g}}(a,b,\rho)$ is simple if and only if
		$b+(n-1)\rho\ne 0$ for any $n\in\mathbb Z^*$.
	\end{coro}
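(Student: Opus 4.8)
The plan is to identify $\mathcal{F}_{\mathfrak{g}}(a,b,\rho) = M_{\mathfrak{hc}}(1,a,b)$, viewed as a $\mathfrak{g}$-module via Theorem~\ref{main2}, with an induced module of the shape ${\rm Ind}_q(V)$ appearing in Theorem~\ref{main3}, and then invoke the simplicity criterion there. First I would read off the explicit action of $M_0$ on $M_{\mathfrak{hc}}(1,a,b)$ from the formula in Theorem~\ref{main2}: since $\mathbf{c}_2$ acts by $-12\rho^2$, the operator $M_0 + \tfrac{1}{24}(n^2-1)\mathbf{c}_2$ acts as $\tfrac12\sum_{m\in\z} b_m b_{-m} - \rho b_0 - \tfrac12(n^2-1)\rho^2$ on the whole Fock space. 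On the highest-weight vector $w_{a,b}$ this scalar is $\tfrac12 b^2 - \rho b - \tfrac12(n^2-1)\rho^2 = \tfrac12(b - (n-1)\rho)(b + (n+1)\rho)$ (up to normalization; the precise constant should be double-checked against the normal-ordering convention), and more generally on a PBW monomial of $\mathfrak{hc}_-$-degree $d$ it shifts by a contribution from the $b_m b_{-m}$ cross terms. The point is that $M_0 + \tfrac1{24}(n^2-1)\mathbf c_2$ acts diagonally on the natural grading of $M_{\mathfrak{hc}}(1,a,b)$ with eigenvalues of the form $\tfrac12(b+k\rho - (n-1)\rho)(b+k\rho+(n+1)\rho)$ for integers $k$ running over the degrees, hence this action is injective for \emph{all} $n\in\z^*$ precisely when $b + (k+1)\rho \ne 0$ and $b + (k-1)\rho \ne 0$ for the relevant $k$; taking $n$ over all of $\z^*$ this reduces to the single condition $b + (m-1)\rho \ne 0$ for all $m\in\z^*$. (One should verify that no eigenvalue can vanish for a reason other than a vanishing linear factor, i.e.\ that the quadratic never has a genuinely ``mixed'' zero — this is where the structure of $\sum b_m b_{-m}$ on a Fock space matters.)

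Next I would check that $M_{\mathfrak{hc}}(1,a,b)$ is of the form ${\rm Ind}_q(V)$ with $q=t=0$, or more precisely exhibit it as induced from a simple $\mathfrak{g}^{(0,-q)}$-module satisfying conditions (a) and (b) of Theorem~\ref{main3}. Inspecting the formulas in Theorem~\ref{main2}, the operators $L_i, M_i, Q_{i-\frac12}$ for $i > 0$ act on $M_{\mathfrak{hc}}(1,a,b)$ by expressions that are (infinite) sums of products of $a_j, b_j, c_j$; the positive-mode parts annihilate the highest-weight vector and the whole module is a smooth $\mathfrak{g}$-module, so after a finite truncation the hypotheses (b) of Theorem~\ref{main3} hold on the subspace $V$ of vectors killed by all sufficiently positive modes. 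The cleanest route is: by Lemma~\ref{RL} and the analysis of Proposition~\ref{th2}, $M_{\mathfrak{hc}}(1,a,b)$ (when the $M_0$-injectivity holds) is $\mathfrak{g}$-isomorphic to ${\rm Ind}_q(V)$ for the appropriate $V = N_{s+q,s,s}$; here one computes directly that $s=0$, $q=0$ and $V$ is one-dimensional (spanned by $w_{a,b}$), because on $w_{a,b}$ every $L_i, M_i, Q_{i-\frac12}$ with $i>0$ vanishes — this is immediate from the formulas since $w_{a,b}$ is annihilated by all positive $\mathfrak{hc}$-modes. Then Theorem~\ref{main3}(ii) gives simplicity of ${\rm Ind}_0(V)$ under exactly condition~(a), which we have translated to $b+(m-1)\rho\ne0$ for all $m\in\z^*$.

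Conversely, when $b + (m-1)\rho = 0$ for some $m \in \z^*$, I would exhibit a nonzero proper submodule directly. The vanishing of the relevant eigenvalue of $M_0 + \tfrac1{24}(m^2-1)\mathbf c_2$ produces a nonzero vector $v$ in some graded piece with $\big(M_0 + \tfrac1{24}(m^2-1)\mathbf c_2\big)v = 0$; using the commutation relation $[Q_r,Q_s] = 2M_{r+s} + \tfrac13\delta_{r+s,0}(r^2-\tfrac14)\mathbf c_2$ with $r = -s = m - \tfrac12$ one gets $Q_{-(m-\frac12)}Q_{m-\frac12}v + Q_{m-\frac12}Q_{-(m-\frac12)}v = \big(2M_0 + \tfrac1{12}(m^2-1)\mathbf c_2\big)v$, and a suitable combination of such relations forces a singular-type vector generating a proper submodule; alternatively, and more simply, the failure of $M_0$-injectivity means the canonical map ${\rm Ind}_0(\mathbb{C}w_{a,b}) \to M_{\mathfrak{hc}}(1,a,b)$ cannot be an isomorphism onto a simple module, since by Theorem~\ref{main4}/Proposition~\ref{th2} simplicity in $\mathcal R_{\mathfrak g}$ together with the structure theory would require that injectivity. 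The main obstacle I anticipate is purely bookkeeping: pinning down the exact scalar by which $M_0 + \tfrac1{24}(n^2-1)\mathbf c_2$ acts on each Fock monomial (normal-ordering constants, the role of $a_0 = a$, the shift by $\rho b_0$), and confirming that the product of all these scalars vanishes iff $b+(n-1)\rho = 0$ for some $n\in\z^*$ and never for a spurious reason — in other words, that the relevant ``Gram-type'' determinant on each graded piece factors exactly into the claimed linear factors, entirely parallel to Lemma~\ref{l3.1} and Theorem~\ref{main1} for the Verma module.
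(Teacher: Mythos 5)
Your strategy---realize $\mathcal{F}_{\mathfrak{g}}(a,b,\rho)$ as an induced module of the shape ${\rm Ind}_0(\mathbb{C}w_{a,b})$ and quote Theorem~\ref{main3}---differs from the paper's, which observes that $w_{a,b}$ is a highest weight vector of weight $(ab-\rho a,\tfrac12 b^2-\rho b)$, uses the graded dimension count $\dim M_{\mathfrak{hc}}(1,a,b)_n=p(n)=\dim M_{\mathfrak{g}}(ab-\rho a,\tfrac12 b^2-\rho b,\tfrac52,-12\rho^2)_n$ to conclude that $\mathcal{F}_{\mathfrak{g}}(a,b,\rho)$ \emph{is} that Verma module, and then applies Theorem~\ref{main1}. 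Your route can be made to work for the ``if'' direction, but as written it has a gap: you never establish that $\mathcal{F}_{\mathfrak{g}}(a,b,\rho)$ is generated by $w_{a,b}$ over $\mathfrak{g}$. Theorem~\ref{main3} gives simplicity of ${\rm Ind}_0(\mathbb{C}w_{a,b})$, hence injectivity of the canonical map into $\mathcal{F}_{\mathfrak{g}}(a,b,\rho)$, but that only shows $U(\mathfrak{g})w_{a,b}$ is a simple submodule; to conclude it is everything you need precisely the dimension count above, which you omit (and your appeal to Proposition~\ref{th2} to get the ${\rm Ind}_q(V)$ shape is circular, since that proposition assumes simplicity). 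A smaller inaccuracy: on the whole Fock space $M_0-(\tfrac12 b^2-\rho b)=\sum_{m>0}b_{-m}b_m$ is locally nilpotent, so the generalized eigenvalue of $M_0$ is the \emph{same} on every graded piece; there is no shift $b\mapsto b+k\rho$ with the degree. This happens not to affect the final condition.

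The ``only if'' direction, however, does not go through. Theorem~\ref{main4} and Proposition~\ref{th2} take the injectivity of $M_0+\tfrac1{24}(n^2-1)\mathbf{c}_2$ as a \emph{hypothesis}; they do not assert that every simple smooth module satisfies it, and the remark following Theorem~\ref{main4} explicitly exhibits simple smooth $\mathfrak{g}$-modules for which it fails. So the implication ``injectivity fails $\Rightarrow$ not simple'' is not available from those results, and your alternative sketch via the relation $[Q_r,Q_{-r}]=2M_0+\tfrac13(r^2-\tfrac14)\mathbf{c}_2$ is not carried out to an actual singular vector. The paper's converse is immediate once the isomorphism with the Verma module is in hand: when $b+(n-1)\rho=0$ for some $n\in\mathbb Z^*$ the determinant of Lemma~\ref{l3.1} vanishes and Theorem~\ref{main1} gives reducibility. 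You would need either that isomorphism (hence again the dimension count) or an explicit singular vector to close this direction.
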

	\begin{proof}By Theorem \ref{main2},  we have
		$$
		L_0{\bf 1}=(ab-\rho a){\bf 1},\quad  M_0{\bf 1}=\left(\frac{1}{2}b^2-\rho b\right){\bf 1}, \quad {\bf c}_1{\bf 1}=\frac52{\bf 1},\quad  {\bf c}_2{\bf 1}=-12\rho^2{\bf 1}.
		$$
  It follow that  $\mathcal{F}_{\mathfrak{g}}(a,b,\rho)$  is a  highest weight $\frak g$-module of central charge $\left(\frac{5}{2},-12\rho^2\right)$.
		From the universal property of the Verma module $M_{\frak g}(ab-\rho a, \frac{1}{2}b^2-\rho b, \frac{5}{2},-12\rho^2)$  and
		$$
		\dim M_{\frak g}\left(ab-\rho a, \frac{1}{2}b^2-\rho b, \frac{5}{2},-12\rho^2\right)_n=  \dim M_{\mathfrak{hc}}(1,a,b)_n=p(n),
		$$
		we have  an isomorphism of $\frak g$-modules :
		$$M_{\frak g}(ab-\rho a, \frac{1}{2}b^2-\rho b, \frac{5}{2},-12\rho^2)\cong M_{\mathfrak{hc}}(1,a,b).$$
		It follows from Theorem \ref{main1} that $\frak g$-module $M_{\mathfrak{hc}}(1,a,b)$  is simple if and only if
		$$
		\frac{1}{2}b^2-\rho b-\frac{1}{2}(i^2-1)\rho^2\neq 0,\quad \forall i\in \z_+,
		$$
		 if and only if $$(b+(i-1)\rho )(b-(i+1)\rho)\ne0,$$
		  if and only if
		$b+(n-1)\rho\ne 0$ for any $n\in\mathbb Z^*$.
		\end{proof}

Let $W_{\mathfrak{hc}}(\phi)$ be the Whittaker $\mathfrak{hc}$-module associated to $\phi$ with $\phi({\bf k})=1$.    Suppose that  
$$
a_{i+1}w_{\phi}=b_{i+1}w_{\phi}=0,\quad \forall i\in\mathbb Z_+.
$$ 
Note $c_{n-\frac12}w=0$ for any $n\in\mathbb Z_+$.  By Lemma \ref{chc}, $W_{\mathfrak{hc}}(\phi)$  is a simple smooth $\frak{hc}$-module of level $1$.  For $\rho\in \cc$, it follows Theorem \ref{main2} that $W_{\mathfrak{hc}}(\phi)$ becomes a $\frak g$-module of central charge $\left(\frac{5}{2},-12\rho^2\right)$.
We refer to this module as a {\bf Fock-Whittaker module} over $\mathfrak{g}$, denoted by $\mathcal{F}_{\mathfrak{g}}(\phi,\rho)$.

\begin{coro} The Fock-Whittaker $\mathfrak{g}$-module  $\mathcal{F}_{\mathfrak{g}}(\phi,\rho)$   is  simple if and only if $\phi(b_1)\ne 0$.

\end{coro}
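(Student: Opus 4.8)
The plan is to identify the Fock--Whittaker module $\mathcal{F}_{\mathfrak{g}}(\phi,\rho)$ with an induced $\mathfrak{g}$-module of the form appearing in Theorem~\ref{main3}, and then apply the simplicity criterion established there, together with the classification in Theorem~\ref{main4}. Concretely, I would first record the action of the Heisenberg generators on the Whittaker vector $w_\phi$: by hypothesis $a_iw_\phi=b_iw_\phi=0$ for $i\ge 2$, $c_{i-\frac12}w_\phi=0$ for $i\ge 1$, while $a_1w_\phi,b_1w_\phi$ are scalar multiples of $w_\phi$ and $a_0,b_0$ act as scalars. Feeding this into the formulas of Theorem~\ref{main2}, one computes that on the cyclic vector $w_\phi$ the operators $L_n,M_n$ annihilate it for $n\ge 2$, that $Q_{r}w_\phi=0$ for $r\ge\frac32$, and that $L_1w_\phi,M_1w_\phi$ are (generically) nonzero. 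In particular $M_1$ acts nontrivially on the $\mathfrak{hc}_0$-generated subspace, and the precise scalar by which $M_1$ acts is, up to the normalization $\phi(\mathbf k)=1$, a nonzero multiple of $\phi(b_1)$ (the leading contribution is $b_0b_1$ together with the $-2\rho b_1$ term; one has to check the total coefficient is a nonzero multiple of $\phi(b_1)$, which is where $\phi(\mathbf k)=1$ enters).

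**Reduction to Theorem~\ref{main3}.** Next I would set $V:=U(\mathfrak{g}^{(0,0)})w_\phi\subset \mathcal{F}_{\mathfrak g}(\phi,\rho)$, or more precisely the appropriate finite-dimensional $\mathfrak{g}^{(0,-q)}$-submodule with $q=0$ and $t=1$ in the language of Section~4: by the paragraph on Whittaker modules just before Corollary~\ref{whittaker}, the relevant data is $(q,t)=(0,1)$, so $V={\rm Ind}_{\mathfrak g^{(1)}}^{\mathfrak g^{(0,0)}}(\mathbb C w_\phi)$, and $\mathcal F_{\mathfrak g}(\phi,\rho)={\rm Ind}_0(V)$. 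One then checks condition (b) of Theorem~\ref{main3} holds with $t=1$ ($M_iV=0$ for $i>1$, $L_jV=0$ for $j>1$), which follows from the explicit $\mathfrak{hc}$-weight grading: every element of $V$ is obtained from $w_\phi$ by applying $a_0,a_1,b_0,b_1,c_{-\frac12},c_{\frac12},\dots$ a bounded number of times, so the $\mathfrak{hc}$-degrees occurring in $V$ are bounded below by a fixed constant, forcing the higher $L,M,Q$ modes to vanish on $V$. Then condition (a) with $t=1$ is exactly the requirement that $M_1$ act injectively on $V$, and since $V$ is finite-dimensional this is equivalent to $M_1$ being nonzero as a scalar on the relevant generating line, i.e.\ (by the computation above) to $\phi(b_1)\ne 0$.

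**Completing the argument.** If $\phi(b_1)\ne 0$: by Lemma~\ref{chc}, $W_{\mathfrak{hc}}(\phi)$ is a simple $\mathfrak{hc}$-module (since $\phi(\mathbf k)=1\ne 0$), so $V$ is a simple $\mathfrak{g}^{(0,0)}$-module (one shows the $\mathfrak g^{(0,0)}$-module structure on $V$ contains enough of the Heisenberg action to inherit simplicity; alternatively one runs the Theorem~\ref{main3}-type argument directly as indicated in the Whittaker-module paragraph). Then Theorem~\ref{main3}(ii) gives that ${\rm Ind}_0(V)=\mathcal F_{\mathfrak g}(\phi,\rho)$ is simple in $\mathcal R_{\mathfrak g}\!\left(\tfrac52,-12\rho^2\right)$. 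Conversely, if $\phi(b_1)=0$, then $M_1$ acts as zero on the line $\mathbb C w_\phi$, hence on all of $V$ (again by the weight-grading argument, $M_1V\subseteq$ the $\mathfrak{hc}$-lowest-degree part, which reduces the check to $w_\phi$), so $M_1V=0$; but then $U(\mathfrak g^{(1)})w_\phi=\mathbb C w_\phi$ would have to be checked, and more to the point, the subspace $\{v\in\mathcal F_{\mathfrak g}(\phi,\rho): M_iv=L_iv=Q_{i-\frac12}v=0,\ \forall i\ge 1\}$ is then strictly larger than expected, producing a proper nonzero submodule --- concretely, $b_1$ acting as $0$ lets one find a Whittaker-type vector for the smaller algebra $\mathfrak g^{(0,0)}$ inside $V$ that is not cyclic, so $V$ is not simple over $\mathfrak g^{(0,0)}$, whence ${\rm Ind}_0(V)$ is not simple.

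**Main obstacle.** The genuinely delicate point is the converse direction and, relatedly, pinning down exactly which scalar governs injectivity of $M_1$: one must verify that among the several terms contributing to $M_1$ in Theorem~\ref{main2} (namely $\tfrac12\sum_m b_mb_{1-m}$ and $-2\rho b_1$), the net coefficient of $\phi(b_1)$ on the generating vector is nonzero, and that \emph{no other} combination of $\phi$-values can make $M_1$ injective when $\phi(b_1)=0$. This is a short but careful computation using $\phi(\mathbf k)=1$ and $a_iw_\phi=b_iw_\phi=0$ for $i\ge2$; once it is done, the simplicity statement drops out of Theorem~\ref{main3} and Theorem~\ref{main4} with essentially no further work.
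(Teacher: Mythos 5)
Your overall strategy---compute the $\mathfrak g$-action on $w_\phi$ via Theorem \ref{main2}, recognize $\mathcal F_{\mathfrak g}(\phi,\rho)$ as an induced/Whittaker module, and invoke the simplicity criterion of Theorem \ref{main3} (equivalently Corollary \ref{whittaker})---is exactly the paper's, but your reduction rests on a concrete computational error that would yield the wrong criterion. You assert that $L_n$ and $M_n$ annihilate $w_\phi$ for all $n\ge 2$ and accordingly take $t=1$, so that condition (a) of Theorem \ref{main3} becomes injectivity of $M_1$. In fact the mode expansions of Theorem \ref{main2} give
\[
M_1w_\phi=(\phi(b_0)-2\rho)\,\phi(b_1)\,w_\phi,\qquad M_2w_\phi=\tfrac12\phi(b_1)^2\,w_\phi,\qquad L_2w_\phi=\phi(a_1)\phi(b_1)\,w_\phi,
\]
and only $M_iw_\phi=L_iw_\phi=0$ for $i\ge 3$. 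So when $\phi(b_1)\neq 0$ condition (b) fails for $t=1$ (since $M_2$ does not kill $w_\phi$), and the quantity that actually governs simplicity is the $M_2$-eigenvalue $\frac12\phi(b_1)^2$, i.e.\ one must take $t=2$. Your own ``main obstacle'' paragraph touches the problem without resolving it: the net coefficient of $\phi(b_1)$ in $M_1w_\phi$ is $\phi(b_0)-2\rho$, which can perfectly well vanish. With $t=1$ your argument would conclude ``simple iff $(\phi(b_0)-2\rho)\phi(b_1)\ne 0$,'' which is strictly stronger than the stated criterion and false: for $\phi(b_0)=2\rho$ and $\phi(b_1)\ne 0$ the module is still simple because $M_2$ acts by the nonzero scalar $\frac12\phi(b_1)^2$. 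The converse direction of your proposal inherits the same defect, since it again tests only $M_1$ on $\mathbb C w_\phi$. (A secondary slip: $V={\rm Ind}_{\mathfrak g^{(1)}}^{\mathfrak g^{(0,0)}}(\mathbb C w_\phi)$ is not finite-dimensional; e.g.\ $M_0w_\phi$ involves $b_{-1}w_\phi$ and $L_0,M_0$ do not act as scalars on $w_\phi$.)

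The paper's proof simply lists the actions of $L_1,L_2,M_1,M_2,Q_{1/2}$ and the higher modes on $w_\phi$, observes that $\mathcal F_{\mathfrak g}(\phi,\rho)$ is a Whittaker $\mathfrak g$-module for a homomorphism $\phi_1:\mathfrak g^{(1)}\to\mathbb C$, and applies Corollary \ref{whittaker} with $k=1$: the module is simple iff $\phi_1(M_2)\ne 0$ or $\phi_1(M_1)\ne 0$, i.e.\ iff $\frac12\phi(b_1)^2\ne 0$ or $(\phi(b_0)-2\rho)\phi(b_1)\ne 0$, i.e.\ iff $\phi(b_1)\ne 0$. If you redo the mode computation, take $t=2$, and use the disjunction over $M_1$ and $M_2$ rather than $M_1$ alone, the rest of your reduction to Theorems \ref{main3} and \ref{main4} goes through essentially as in the paper.
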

\begin{proof}
    By Theorem \ref{main2}, we have
    \begin{align*}
L_1w_{\phi}&=(\phi(a_0)\phi(b_1)+\phi(a_1)\phi(b_0)-2\rho \phi(a_1))w_{\phi},\ 
L_2w_{\phi}=\phi(a_1)\phi(b_1)w_{\phi},\  L_iw_{\phi}=0,\\
M_1w_{\phi}&=(\phi(b_0)-2\rho)\phi(b_1)w_{\phi},\  M_2w_{\phi}=\frac12\phi(b_1)^2w_{\phi},\  M_iw_{\phi}=0,\\ 
Q_{\frac12}w_{\phi}&=\phi(b_1)c_{-\frac12}w_{\phi},\  Q_{r}w_{\phi}=0,\  {\bf c}_1w_{\phi}=\frac52w_{\phi},\   {\bf c}_2w_{\phi}=-12\rho^2w_{\phi}
\end{align*}
for any $ i\ge 3, r\geq \frac{3}{2}$. It is clear that $\mathcal{F}_{\mathfrak{g}}(\phi,\rho)$ is a Whittaker $\mathfrak{g}$-module of central charge $\left(\frac{5}{2},-12\rho^2\right)$.  From Corollary \ref{whittaker}, $\mathcal{F}_{\mathfrak{g}}(\phi,\rho)$   is  simple if and only if $\phi(b_1)\ne 0$.
\end{proof}

	\section*{Acknowledgements}
The authors would like to thank Prof. Haisheng Li for his useful and helpful comments on our manuscript.
This work was supported by the National Natural Science Foundation of China (11971315, 12071405,12171155) and NSERC (311907-2020).


\begin{thebibliography}{99}
		
		
		
		


  \bibitem{AR} D. Adamovic and G. Radobolja, On free field realization of $W(2,2)$-modules, \textit{SIGMA} \textbf{113} (2016), 1--13.

\bibitem{BGMM} A. Bagchi and  R. Gopakumar, I. Mandal and A. Miwa, GCA in 2d, \textit{JHEP} \textbf{08} (2010), 004.

\bibitem{BSZ} A. Bagchi, A. Sahaa and Zodinmawiaa, BMS characters and modular invariance, \textit{JHEP} \textbf{138} (2019), 138.

\bibitem{BJMN} N. Banerjee, D.P. Jatkar, S. Mukhi and T. Neogi, Free-field realizations of the BMS$_3$ algebra and its extensions, \textit{JHEP} \textbf{06} (2016), 024.

\bibitem{BDMT} G. Barnich, L. Donnay, J. Matulich and R. Troncoso, Asymptotic symmetries and dynamics of three-dimensional flat supergravity, \textit{JHEP} \textbf{08} (2014), 071.

\bibitem{BO1} G. Barnich and B. Oblak, Notes on the BMS group in three dimensions, I: Induced representations, \textit{JHEP} \textbf{06} (2014), 129.

\bibitem{BO2} G. Barnich and B. Oblak, Notes on the BMS group in three dimensions: II. Coadjoint representation, \textit{JHEP} \textbf{03} (2015),033.

\bibitem{BBM} H. Bondi, M.G.J. van der Burg and A.W.K. Metzner, Gravitational waves in general relativity. 7. Waves from axisymmetric isolated systems, \textit{Proc. Roy. Soc. Lond. A} \textbf{269} (1962), 21.

\bibitem{C} H. Chen, Simple restricted modules over the  N=1 Ramond algebra as weak modules for vertex operator superalgebras, \textit{J. Algebra} \textbf{621} (2023), 41--57.

\bibitem{CG} H. Chen and X. Guo, New simple modules for the Heisenberg-Virasoro algebra, \textit{J. Algebra} \textbf{390} (2013), 77--86.

\bibitem{CDYS} H. Chen, X. Dai, Y. Liu and Y. Su, A class of non-weight modules over the super-BMS3 algebra, \textit{J. Alg. Appl.} (2023). https://doi.org/10.1142/S0219498824502220.

\bibitem{CY} Q. Chen and Y. Yao, Simple restricted modules for the universal central extension of the planar Galilean conformal algebra, \textit{J. Algebra} (2023). https://doi.org/10.1016/j.jalgebra.2023.07.011

\bibitem{DCL} M. Dilxat, L. Chen and D. Liu, Classification of simple Harish-Chandra modules over the Ovsienko-Roger superalgebra, \textit{Proc. Roy. Soc. Edinburgh A}, doi: 10.1017/prm.2023.20/S0308210523000203.

\bibitem{DGL1} M. Dilxat, S. Gao and D. Liu, Whittaker modules over the {\rm N=1} super-BMS$_3$ algebra, \textit{J. Alg. Appl.} 2023. https://doi.org/10.1142/S0219498824500889.

\bibitem{Gao} D. Gao, Simple restricted modules for the Heisenberg-Virasoro algebra, \textit{J. Algebra} \textbf{574} (2021), 233-251.

\bibitem{GG} D. Gao and Y. Gao, Representations of the planar Galilean conformal algebra, \textit{Commun. Math. Phys.} \textbf{391}(1) (2022), 199-221.

\bibitem{HSSU} M. Henkel, R. Schott, S. Stoimenov and J. Unterberger, On the dynamical symmetric algebra of ageing: Lie structure, representations and Appell systems, \textit{Confluentes Mathematici} \textbf{04} (2012), 1250006.

\bibitem{IK1} K. Iohara and Y. Koga, Representation theory of Neveu-Schwarz and Remond algebras I: Verma modules, \textit{Adv. Math.} \textbf{177} (2003), 61--69.

\bibitem{IK2} K. Iohara and Y. Koga, Representation theory of Neveu-Schwarz and Remond algebras II: Fock modules, \textit{Ann. Inst. Fourier} \textbf{53} (2003), 1755--1818.

\bibitem{IK3} K. Iohara and Y. Koga, Representation theory of the Virasoro algebra, \textit{Springer Monographs in Mathematics}, Springer-Verlag, London, 2011.

\bibitem{JP} W. Jiang and Y. Pei, On the structure of Verma modules over the $W$-algebra $W (2, 2)$, \textit{J. Math. Phys.} \textbf{51} (2010), 022303.


\bibitem{JZ} W. Jiang and W. Zhang, Verma modules over the $W(2,2)$ algebras, \textit{J. Geom. Phys.} \textbf{98} (2015), 118--127.

\bibitem{JPZ} W. Jiang, Y. Pei and W. Zhang, Determinant formula and a realization for the Lie algebra $W(2,2)$, \textit{Sci. China Math.} \textbf{61} (2018), 685--694.


\bibitem{Kac1} V. Kac, Vertex Algebras for Beginners, \textit{University Lecture Series}, vol. 10, American Mathematical Society, 1997.

\bibitem{KRR} V. Kac, A. Raina and N. Rozhkovskaya, Bombay lectures on highest weight representations of infinite dimensional Lie algebras, \textit{Advanced Series in Mathematical Physics}, vol. 29, World Scientific, 2013.

\bibitem{KW} V. Kac and W. Wang, Vertex operator superalgebras and representations, in \textit{Mathematical Aspects of Conformal and Topological Field Theories and Quantum Groups}, Contemporary Math., vol. 175, Amer. Math. Soc., Providence, 1994, 161--191.

\bibitem{Li} H. Li, Local systems of vertex operators, vertex superalgebras and modules, \textit{J. Algebra} \textbf{109} (1996), 143-195.

\bibitem{Li1} H. Li, Local systems of twisted vertex operators, vertex superalgebras and twisted modules, in: Moonshine, the Monster and Related Topics, Proc. Joint Summer Research Conference, Mount Holyoke, 1994, ed. by C. Dong and G. Mason, \textit{J. Algebra} \textbf{193} (1996), 203-236.



\bibitem{LPX1} D. Liu, Y. Pei and L. Xia, Whittaker modules for the super-Virasoro algebras, \textit{J. Algebra Appl.} \textbf{18} (2019), 1950211.

\bibitem{LPX2} D. Liu, Y. Pei and L. Xia, Simple restricted modules for Neveu-Schwarz algebra, \textit{J. Algebra} \textbf{546} (2020), 341--356.

\bibitem{LPX3}D. Liu, Y. Pei and L. Xia, A category of restricted modules for the Ovsienko-Roger algebra, Algebr. Represent. Theor. (2022) 1-15.


\bibitem{LPXZ0} D. Liu, Y. Pei, L. Xia and K. Zhao, Simple modules over the mirror Heisenberg-Virasoro algebra, \textit{Commun. Contemp. Math.} \textbf{24}(4) (2022), 2150026.

\bibitem{LPXZ} D. Liu, Y. Pei, L. Xia and K. Zhao, Simple smooth modules over the superconformal current algebra, arXiv:2305.16662.



\bibitem{MZ1} V. Mazorchuk and K. Zhao, Classification of simple weight Virasoro modules with a finite-dimensional weight space, \textit{J. Algebra} \textbf{307} (2007), 209--214.

\bibitem{MZ2} V. Mazorchuk and K. Zhao, Characterization of simple highest weight modules, \textit{Canad. Math. Bull.} \textbf{56} (2013), 606--614.

\bibitem{MZ3} V. Mazorchuk and K. Zhao, Simple Virasoro modules which are locally finite over a positive part, \textit{Selecta Math.} \textbf{20} (2014), 839--854.

\bibitem{MR} A. Meurman and A. Rocha-Caridi, Highest weight representations of the Neveu-Schwarz and Ramond algebras, \textit{Comm. Math. Phys.} \textbf{107} (1986), 263--294.




\bibitem{Ob} B. Oblak, Characters of the BMS group in three dimensions, \textit{Comm. Math. Phys.} \textbf{340} (2015), 413--432.



\bibitem{Ra} G. Radobolja, Subsingular vectors in Verma modules and tensor product modules over the twisted Heisenberg-Virasoro algebra and $W(2,2)$ algebra, \textit{J Math Phys.} \textbf{54} (2013) 071701.

\bibitem{Sa1} R. Sachs, Gravitational waves in general relativity. VIII. Waves in asymptotically flat space-times, \textit{Proceedings of the Royal Society of London, Series A. Mathematical and Physical Sciences} \textbf{270} (1962), 103--126.

\bibitem{Sa2} R. Sachs, Asymptotic symmetries in gravitational theory, \textit{Physical Review} \textbf{128} (1962), 2851--2864.

\bibitem{TYZ} H. Tan, Y. Yao and K. Zhao, Simple restricted modules over the Heisenberg-Virasoro algebra as VOA modules, arXiv:2110.05714v2.

\bibitem{XZ} Y. Xue and K. Zhao, Representations of the Fermion-Virasoro algebras, arXiv:2306.15250.

\bibitem{ZD} W. Zhang and C. Dong, $W$-algebra $W(2,2)$ and the vertex operator algebra $L(\frac{1}{2},0)\otimes L(\frac{1}{2},0)$, \textit{Commun. Math. Phys.} \textbf{285} (2009), 991--1004.
		
	\end{thebibliography}
\end{document}